\newtheorem*{theoremA}{Theorem A}
\newtheorem*{theoremA'}{Theorem A'}
\newtheorem*{theoremA"}{Theorem A"}
\newtheorem*{theoremB}{Theorem B}
\newtheorem*{conjecture}{Conjecture}
\newtheorem{theorem}{Theorem}[section]
\newtheorem{corollary}[theorem]{Corollary}
\newtheorem{lemma}[theorem]{Lemma}
\newtheorem{proposition}[theorem]{Proposition}
\newtheorem{remark}[theorem]{Remark}
\def\gen#1{\langle#1\rangle}
\def\pd#1#2{{\rm ppd}(#1, #2)}
\def\bpd#1#2{{\rm bppd}(#1, #2)}
\def\lbpd#1#2{{\rm Lbpd}(#1, #2)}
\def\lpd#1#2{{\rm Lpd}(#1, #2)}
\def\GL{{\mathrm{GL}}}
\def\SU{{\mathrm{SU}}}
\def\PSL{{\mathrm{PSL}}}
\def\PSU{{\mathrm{PSU}}}
\def\PSp{{\mathrm{PSp}}}
\def\POm{{\mathrm{P}\Omega}}
\def\SL{{\mathrm{SL}}}
\def\Sp{{\mathrm{Sp}}}
\def\ppd{{\mathrm{ppd}}}
\def\bbppd{{\mathrm{bppd}}}
\def\Lbpd{{\mathrm{Lbpd}}}
\begin{document}

\title{A new solvability criterion for finite groups}

\author[S. Dolfi]{Silvio Dolfi}
\address{Silvio Dolfi, Dipartimento di Matematica U. Dini,\newline
Universit\`a degli Studi di Firenze, viale Morgagni 67/a,
50134 Firenze, Italy.}
\email{dolfi@math.unifi.it}

\author[M. Herzog]{Marcel Herzog}
\address{Marcel Herzog, Department of Mathematics,\newline
Raymond and Beverly Sackler Faculty of Exact Sciences,\newline 
Tel Aviv University,  Tel Aviv, Israel.}
\email{herzogm@post.tau.ac.il}

\author[C. Praeger]{Cheryl E. Praeger}
\address{Cheryl E. Praeger, School of Mathematics and Statistics,\newline
The University of Western Australia,
35 Stirling Highway, Crawley, WA 6009, Australia}
\email{praeger@maths.uwa.edu.au}

\thanks{The first author is grateful to the School of Mathematics and Statistics of the
University of Western Australia for its hospitality and support, while the investigation
was carried out. He was partially supported by the
MIUR project ``Teoria dei gruppi e applicazioni''. The third author was supported by 
Federation Fellowship FF0776186 of the Australian Research Council.}

\begin{abstract}
In 1968, John Thompson proved that a finite group $G$ is solvable if and only if every 
$2$-generator subgroup of $G$ is solvable. In this paper, we prove that solvability of a finite
group $G$ is guaranteed by a seemingly weaker condition: $G$ is solvable if for all conjugacy classes $C$ 
and $D$ of $G$, \emph{there exist} $x\in C$ and $y\in D$ for which $\gen{x,y}$ is solvable. We also prove 
the following property of finite nonabelian simple groups, which is the key tool for our proof of the 
solvability criterion: if $G$ is a finite nonabelian simple group, then there exist two integers $a$ and $b$
which represent orders of elements in $G$ and for all elements $x,y\in G$ with $|x|=a$ and $|y|=b$, the subgroup
$\gen{x,y}$ is nonsolvable. 
\end{abstract}
\subjclass[2000]{20D10, 20F16}
\keywords{Solvable groups, finite simple groups}
\maketitle

\section{Introduction}

John G. Thompson's famous `N-group paper' ~\cite{T} of 1968  included the following important solvability criterion for finite groups:
\begin{quote}
\emph{A finite group is solvable if and only if every pair of its elements generates a solvable group.}
\end{quote}
P. Flavell~\cite{F} gave a relatively simple  proof of Thompson's result   in 1995.
We prove that solvability of finite groups is guaranteed by 
a seemingly weaker condition than the solvability of all its $2$-generator
subgroups. 

\begin{theoremA}
Let $G$ be a finite group such that, for all $x,y \in G$, there
exists an element $g \in G$ for which $\gen{x,y^g}$ is solvable.
Then $G$ is solvable.
\end{theoremA}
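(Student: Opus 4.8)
The plan is to argue by contradiction using a minimal counterexample and to reduce, via the structure of minimal normal subgroups, to the simple-group property (Theorem B) described in the abstract. Suppose $G$ is a nonsolvable group of least order satisfying the hypothesis. The first observation is that the hypothesis passes to quotients: if $N \trianglelefteq G$ and $\bar x, \bar y \in G/N$ are the images of $x, y \in G$, then choosing $g$ with $\gen{x, y^g}$ solvable forces $\gen{\bar x, \bar y^{\bar g}}$, a homomorphic image, to be solvable. Hence every proper quotient of $G$ again satisfies the hypothesis and so, by minimality, is solvable.

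Next I would pin down the structure of $G$. If $G$ had two distinct minimal normal subgroups $M_1, M_2$, then $M_1 \cap M_2 = 1$ and $G$ would embed in the solvable group $G/M_1 \times G/M_2$, a contradiction; so $G$ has a unique minimal normal subgroup $N$. Since $G/N$ is solvable while $G$ is not, $N$ must be nonsolvable, and as a minimal normal subgroup it is a direct product $N = S_1 \times \cdots \times S_k$ of pairwise isomorphic nonabelian simple groups, say all isomorphic to $S$. Crucially, $N \trianglelefteq G$, so the whole $G$-conjugacy class of any element of $N$ lies inside $N$, and conjugation by $g \in G$ induces an automorphism of $N$ that permutes the factors $S_i$ and acts on each by an isomorphism onto its image.

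Now I would apply Theorem B to $S$ to obtain element orders $a$ and $b$ such that any element of order $a$ and any element of order $b$ in $S$ generate a nonsolvable subgroup. Fixing $s, t \in S$ of orders $a, b$, set $x = (s, \ldots, s)$ and $y = (t, \ldots, t)$, the diagonal elements of $N$; then $|x| = a$ and $|y| = b$. For any $g \in G$ the element $y^g$ lies in $N$, and its projection $\pi_1(y^g)$ to the factor $S_1$ equals the image of some coordinate of $y$ under an isomorphism, hence is an element of order $b$ in $S_1 \cong S$; likewise $\pi_1(x) = s$ has order $a$. By Theorem B the subgroup $\gen{\pi_1(x), \pi_1(y^g)}$ of $S_1$ is nonsolvable, and since it is a homomorphic image of $\gen{x, y^g}$, the latter is nonsolvable for every $g$. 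This contradicts the hypothesis applied to the pair $x, y$, completing the argument.

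The reduction itself is routine; the genuine content is Theorem B, which is assumed here. Within the reduction, the one point requiring care is the case $k > 1$, where the conjugation action of $G$ can permute the simple factors nontrivially: the diagonal choice of $x$ and $y$ together with projection onto a single factor is exactly what neutralizes this permutation, since conjugation preserves element orders coordinatewise and Theorem B depends only on the orders $a$ and $b$.
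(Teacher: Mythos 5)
Your proof is correct and follows essentially the same route as the paper's: closure of the hypothesis under quotients, reduction to a unique nonsolvable minimal normal subgroup $S^k$, and diagonal elements whose orders are supplied by Theorem~B. The only cosmetic difference is that you finish by projecting $\gen{x,y^g}$ onto a single simple factor, whereas the paper observes that it is a subdirect subgroup of a direct product of nonsolvable groups; both conclusions are immediate.
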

Theorem~A  can be rephrased as the following equivalent result.

\begin{theoremA'}
Let $G$ be a finite group such that, for all conjugacy classes
$C$ and $D$ of $G$ (possibly $C=D$), there exist $x\in C$ and $y\in D$ for which
$\gen{x, y}$ is solvable. Then $G$ is solvable.
\end{theoremA'}

Our second main result, which is the key tool for proving Theorem~A,
deals with the nonsolvability of certain $2$-generator subgroups 
of finite nonabelian simple groups. For a finite group $G$ let
\begin{equation}\label{oe}
oe(G)= \{ m\ |\ \exists\, g \in G \hbox{ with } |g| = m\} 
\notag\end{equation}
denote the set  of element orders of $G$. Using the classification
of finite simple groups, we prove the following theorem.

\begin{theoremB}
Let $G$ be a finite nonabelian simple group. Then there exist
$a,b \in oe(G)$,  such that, for all $x,y\in G$ with
$|x|=a$, $|y|= b$, the subgroup $\gen{x,y}$ is nonsolvable.
\end{theoremB}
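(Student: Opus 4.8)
The plan is to prove Theorem B by a uniform-where-possible, case-by-case analysis over the families of finite nonabelian simple groups, as furnished by the classification. For each simple group $G$, my task is to exhibit two element orders $a,b\in oe(G)$ with the property that \emph{every} pair of elements of those orders generates a nonsolvable subgroup. The natural strategy is to choose $a$ and $b$ to be orders forcing the generated subgroup to be large: concretely, I would try to pick $a$ and $b$ so that the only subgroups containing elements of both orders are nonsolvable. A clean way to guarantee this is to arrange that any proper subgroup $H\le G$ whose order is divisible by both a prime dividing $a$ and a prime dividing $b$ is itself nonsolvable, or better, that no proper subgroup of $G$ contains elements of order $a$ and of order $b$ simultaneously except nonsolvable ones.

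The key tool I would deploy is the theory of \emph{large primitive prime divisors}, as signalled by the macros \ppd, \bbppd, \Lbpd\ in the preamble. For a prime power $q$ and an integer $e$, a primitive prime divisor of $q^e-1$ is a prime $r$ dividing $q^e-1$ but not $q^i-1$ for $0<i<e$; by Zsygmondy's theorem these exist apart from small exceptions. The point is that an element of order divisible by such a large primitive prime divisor can only lie in rather few maximal subgroups, and information about which maximal subgroups contain $\ppd$-elements is available in the literature (Guralnick–Penttila–Praeger–Saxl and related work). So for the generic classical groups $\PSL$, $\PSU$, $\PSp$, $\POm$, I would take $a$ and $b$ to be (multiples of) two different large primitive prime divisors corresponding to two different exponents $e_1,e_2$, chosen so that the lists of maximal subgroups containing $e_1$-elements and $e_2$-elements intersect only in subgroups that are nonsolvable (typically only $G$ itself, or subgroups forced to contain a large classical section). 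If $x$ has order $a$ and $y$ has order $b$, then $\gen{x,y}$ must lie in a common overgroup containing both a $\ppd$-element for $e_1$ and one for $e_2$; by the classification of such subgroups this overgroup is nonsolvable, hence so is $\gen{x,y}$. The exceptional groups of Lie type would be handled similarly using two suitable torus orders (or primitive prime divisors of the relevant cyclotomic factors of the group order), again invoking known lists of maximal subgroups, or maximal tori, that can contain elements of those orders.

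For the remaining families I expect the arguments to be more hands-on. For the alternating groups $A_n$, I would choose $a$ and $b$ to be orders of elements whose cycle types force any subgroup containing them to be transitive (and in fact primitive), and then appeal to a theorem bounding the order, or restricting the structure, of a primitive permutation group containing an element with few cycles (for instance an $n$-cycle or an $(n-1)$-cycle together with a suitable second element); such a group must contain $A_n$ and hence is nonsolvable. The finitely many sporadic groups, together with the finitely many small exceptional cases excluded by Zsygmondy, I would simply verify directly, reading off from the known lists of maximal subgroups (for example from the ATLAS) a pair of element orders $a,b$ no solvable subgroup can accommodate.

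The main obstacle will be the uniform treatment of the classical groups and, within that, controlling the \emph{solvable} overgroups. Ensuring that the two chosen primitive-prime-divisor orders $a,b$ are never jointly realized inside a solvable subgroup requires carefully matched choices of the exponents $e_1,e_2$ relative to the rank and field size, and a genuine case analysis of the small-rank and small-field cases where Zsygmondy primes fail to exist or where extra maximal subgroups (subfield subgroups, classical subgroups of the same type, the $\mathcal{C}_i$ families) intrude. I anticipate that verifying these boundary cases, and confirming in each family that the candidate orders $a,b$ do lie in $oe(G)$ while excluding all solvable configurations, will consume the bulk of the proof and constitute its technical heart.
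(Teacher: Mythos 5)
Your strategy coincides with the paper's own: large (basic) primitive prime divisors together with the classification of linear groups containing ppd-elements for the classical groups (the paper uses the Niemeyer--Praeger recognition theorems rather than Guralnick--Penttila--Praeger--Saxl, but these are companion results), cyclic maximal tori and Weigel's lists of their overgroups for the exceptional groups, Jordan's theorem on primitive groups containing a large prime cycle for the alternating groups, and direct ATLAS verification for the sporadic groups and the small exceptional cases. What you have written is only the roadmap --- the exponent choices, the reducible/irreducible and extension-field dichotomies, and the many boundary cases you rightly flag as the technical heart are where essentially all of the paper's work lies --- but the approach is the same.
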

 
\par
Theorem~B was proved separately for alternating groups,
sporadic groups, classical groups of Lie type and exceptional groups
of Lie type in Propositions~\ref{prop3.4}, \ref{prop3.5}, \ref{prop3.2},
\ref{prop3.3}, respectively. In view of Propositions~\ref{prop3.4}, \ref{prop3.5},
we state the following conjecture.

\begin{conjecture}
If $G$ is a finite nonabelian simple group, then there exist
two distinct primes $p,q\in oe(G)$, such that,
for all $x,y\in G$ with
$|x|=p$, $|y|= q$, the subgroup $\gen{x,y}$ is nonsolvable (or, maybe, even nonabelian
simple).
\end{conjecture}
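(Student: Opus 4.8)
The plan is to invoke the classification of finite simple groups and to treat the four families --- alternating, sporadic, classical Lie type, and exceptional Lie type --- separately, in each case exhibiting a concrete pair $a,b \in oe(G)$ and showing that no \emph{proper} subgroup, a fortiori no solvable subgroup, can contain both an element of order $a$ and an element of order $b$. The governing principle throughout is to choose $a$ and $b$ so large, or so arithmetically incompatible, that the two cyclic subgroups they generate have no common overgroup other than $G$ itself; then $\gen{x,y}=G$ is automatically nonsolvable. For the finitely many small groups in each family that escape the generic argument, I would verify the statement by direct inspection.

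For the alternating groups $A_n$ (with $n\ge 5$) I would take $a$ and $b$ to be two distinct primes $p,q$ with $n/2 < p,q \le n-3$, whose existence for large $n$ follows from the distribution of primes, the remaining small $n$ being handled by hand. An element of such prime order is then forced to be a single $p$-cycle (respectively $q$-cycle), since $2p>n$. By a classical theorem of Jordan, a primitive permutation group of degree $n$ containing a $p$-cycle with $p\le n-3$ contains $A_n$, so the main point is to arrange that $\gen{x,y}$ is primitive. Here I expect to use that the supports of $x$ and $y$ can be controlled to force first transitivity and then primitivity, passing to a slightly refined choice of element orders (not necessarily prime, but forcing large, overlapping support) in the cases where two prime cycles alone do not suffice.

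For the classical and exceptional groups of Lie type I would use the theory of primitive prime divisors. By Zsigmondy's theorem, for a prime power $q$ and all but finitely many exceptional pairs there is a prime $r=\pd{q}{e}$ dividing $q^e-1$ but no $q^i-1$ with $i<e$; an element whose order is divisible by such a large $r$ acts on the natural module essentially as a Singer-type element of a maximal torus, and its presence severely restricts the possible overgroups. I would choose $a$ and $b$ divisible by two primitive prime divisors $\pd{q}{e_a}$ and $\pd{q}{e_b}$ for two well-separated torus parameters $e_a,e_b$ (for instance near the rank and near half the rank) and then invoke the Guralnick--Penttila--Praeger--Saxl classification of subgroups of classical groups containing $\Lbpd$-elements to conclude that every subgroup of $G$ containing elements of both orders lies in a short, explicit list of nonsolvable groups, essentially classical subgroups of the same type, and is therefore nonsolvable; the exceptional groups are handled analogously via their maximal-subgroup structure together with the same primitive-prime-divisor input. \emph{This step is the main obstacle.} One must choose the pair $(e_a,e_b)$ so that the two lists of $\Lbpd$-overgroups intersect only in nonsolvable groups, and one must dispose by hand of every Zsigmondy exception, every small field $q$, and every small rank, precisely where primitive prime divisors either fail to exist or fail to be large enough for the classification to apply.

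Finally, for the $26$ sporadic groups I would argue by direct computation from the ATLAS: for each group it suffices to pick two element orders $a,b$ and to check, from the known lists of maximal subgroups, that no maximal subgroup --- hence no proper, and in particular no solvable, subgroup --- contains elements of both orders. Indeed, as the concluding Conjecture suggests, two suitably chosen \emph{primes} should already work in every sporadic case. Assembling the four families then completes the proof.
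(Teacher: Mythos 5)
The statement you are proving is the paper's \emph{Conjecture}, which the paper explicitly leaves open; what the paper actually proves is the weaker Theorem~B, in which $a$ and $b$ are allowed to be arbitrary element orders rather than distinct primes. Your proposal is, in outline, precisely the paper's proof of Theorem~B (Jordan's theorem for $A_n$, ATLAS inspection for sporadics, primitive prime divisors and the Niemeyer--Praeger and Guralnick--Penttila--Praeger--Saxl machinery for classical groups, cyclic maximal tori and Weigel's lists for exceptional groups), and for the alternating and sporadic families two primes do indeed suffice, as Propositions~\ref{prop3.4} and~\ref{prop3.5} show. But for the groups of Lie type your sketch does not close the gap that makes the statement a conjecture rather than a theorem: you choose $a$ and $b$ merely \emph{divisible by} suitable primitive prime divisors, whereas the conjecture requires the orders themselves to be prime.

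This is not a cosmetic issue, because the paper's argument leans on composite orders at exactly the points where primality is hard to arrange. First, the ``large'' primitive divisor sets $\Lbpd(q,e)$ needed for Lemma~\ref{lem3.1} may contain only the non-prime $(e+1)^2$ when the unique primitive prime divisor is $e+1$ itself; replacing that order by the prime $e+1$ re-admits the extension-field overgroups $\GL(d/c,q^c).c$, which can be solvable. Second, for the exceptional groups the ``standard argument'' takes $a=|T|$, the full order of a cyclic maximal torus (for instance $(q+1)(q^6-q^3+1)$ for $E_7(q)$), precisely so that Lemma~\ref{lemtorus} forces $\langle x\rangle$ to be a conjugate of $T$ and thereby pins down the maximal overgroups; an element of prime order $p$ dividing $|T|$ generates only a proper subgroup of a torus, and the control over its overgroups is much weaker. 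Third, several small groups are handled with composite orders that appear unavoidable at the level of the available maximal-subgroup data ($15$ and $7$ for $\PSp(6,2)$, $9$ and $5$ for $\PSp(4,3)$, $8$ and $7$ for $\PSU(3,3)$ and $\PSU(3,5)$, $10$ and $13$ for ${}^2F_4(2)'$). Until you produce prime replacements and new arguments in all of these situations, you have reproved Theorem~B, not the Conjecture.
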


The authors are grateful to Frank L\"ubeck, Pham Tiep and Thomas
Wiegel for supplying us with very important information concerning
simple groups of Lie type. We are also grateful to Bob Guralnick
and Gunter Malle for conveying to us results from their paper
\cite{GM} prior to its publication.

\subsection{Other generalisations of Thompson's theorem}

Several oth\-er `Thompson-like' results have appeared in the literature recently. We mention here four such theorems. 
In the first three results, solvability of all $2$-generator subgroups is replaced by a weaker condition restricting 
the required set of solvable $2$-generator subgroups, in different ways from our generalisation.

In 2000, Guralnick and Wilson \cite{GW} obtained a solvability criterion by restricting the proportion of 
$2$-generator subgroups required to be solvable. 

\begin{theorem}
A finite group is solvable if and only if more than $\frac {11}{30}$ of the pairs of elements of $G$      
generate a solvable subgroup.
\end{theorem}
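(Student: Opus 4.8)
The forward implication is trivial: in a solvable group every pair generates a solvable subgroup, so the stated proportion equals $1$. For the converse I would prove the contrapositive, namely that a nonsolvable finite group $G$ satisfies $P_{\mathrm{sol}}(G)\le \tfrac{11}{30}$, where $P_{\mathrm{sol}}(G)$ denotes the proportion of ordered pairs $(x,y)\in G\times G$ with $\gen{x,y}$ solvable; the value $\tfrac{11}{30}$ should be attained precisely when $G=A_5$.

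The first step is a monotonicity property: for any $N\nor G$, the image of a solvable-generating pair stays solvable-generating in $G/N$, so $P_{\mathrm{sol}}(G)\le P_{\mathrm{sol}}(G/N)$. Arguing with a counterexample $G$ of least order, this inequality lets me assume that every proper quotient of $G$ is solvable. A routine argument then forces $G$ to possess a unique minimal normal subgroup $N=S_1\times\cdots\times S_k$, a direct power of a nonabelian simple group $S$, with $\cent{G}{N}=1$. Hence either $k=1$ and $G$ is almost simple with socle $S$, or $k\ge 2$ and $G$ embeds in the wreath product $\aut{S}\wr S_k$ permuting the $k$ factors; in both cases $G/N\le \Out{S}\wr S_k$ is solvable, consistent with the reduction.

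The analytic core is an estimate for simple groups: invoking the classification, I would show that for every nonabelian simple $S$ the proportion $\sigma(S)$ of pairs of $S$ generating a solvable subgroup satisfies $\sigma(S)\le \tfrac{11}{30}$, with equality only for $S=A_5$. For $A_5$ this is an explicit computation, since every proper subgroup of $A_5$ is solvable: thus $\sigma(A_5)=1-\tfrac{19}{30}=\tfrac{11}{30}$, where $\tfrac{19}{30}$ is the probability that two elements generate $A_5$, obtained from the M\"obius (Eulerian) function of its subgroup lattice. For every other $S$ one combines the known lower bounds for the probability of generating a simple group (and its almost simple overgroups), in the sharp form of \cite{GM}, with control of the proper nonsolvable subgroups, to obtain the strict inequality $\sigma(S)<\tfrac{11}{30}$.

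It remains to pass from $S$ to the reduced group $G$. When $k=1$ and $G$ is almost simple, a direct estimate of the solvable-pair proportion, again resting on the generation results for $G$ itself, yields $P_{\mathrm{sol}}(G)\le \tfrac{11}{30}$. The delicate case is $k\ge 2$: here one must exploit the product action, restricting a solvable subgroup $\gen{x,y}$ to the orbits of $\gen{x,y}$ on the $k$ simple factors and observing that each such restriction is again solvable, so that the per-factor bound $\sigma(S)$ applies repeatedly. Combining these restrictions with the transitive permutation action of $G$ on the factors should force $P_{\mathrm{sol}}(G)$ well below $\tfrac{11}{30}$ as soon as $k\ge2$. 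I expect this product/diagonal bookkeeping --- making the orbit decomposition uniform over all pairs and correctly accounting for the $S_k$-action induced by $G$ --- to be the main obstacle, since it is precisely where the wreath-product combinatorics must be reconciled with the simple-group estimate.
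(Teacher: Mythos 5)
This statement is not proved in the paper at all: it is Guralnick and Wilson's theorem, quoted from \cite{GW} in the survey subsection on other generalisations of Thompson's criterion, and the paper contains no argument for it. So there is no in-house proof to compare yours against, and your proposal has to be judged on its own terms. Your skeleton is the standard and correct reduction: the forward direction is trivial, $P_{\mathrm{sol}}(G)\le P_{\mathrm{sol}}(G/N)$ holds because solvable-generating pairs map to solvable-generating pairs and the map on pairs is $|N|^2$-to-one, a minimal counterexample therefore has all proper quotients solvable, hence a unique minimal normal subgroup $N=S^k$ with $S$ nonabelian simple and $\cent{G}{N}=1$, and the computation $\sigma(A_5)=1-\tfrac{19}{30}=\tfrac{11}{30}$ (every proper subgroup of $A_5$ being solvable) correctly identifies the extremal value.

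However, the proposal has two genuine gaps beyond that reduction. First, the entire quantitative content --- that the solvable-pair proportion is at most $\tfrac{11}{30}$ for every nonabelian simple $S$, and, what your $k=1$ case actually requires, for every almost simple group --- is asserted rather than proved, and the tool you point to does not deliver it: $1-(\text{probability of }2\text{-generation})$ is \emph{not} an upper bound for $\sigma(S)$, because non-generating pairs may generate proper \emph{nonsolvable} subgroups (already in $A_6$ a pair can generate one of the twelve $A_5$ subgroups, and for such $S$ the $2$-generation probability itself falls below $\tfrac{19}{30}$, so the naive inequality is not even numerically adequate); one must genuinely count pairs lying in maximal solvable subgroups, which is where all the work in \cite{GW} sits, and the citation of \cite{GM} (a paper on fixed point spaces) is not a source for these bounds. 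Second, the $k\ge 2$ step as described does not parse: if $\langle x,y\rangle$ permutes the $k$ factors nontrivially there is no induced pair of elements of $S$ on a given factor to which $\sigma(S)$ could be applied, and even when $x$ and $y$ both normalize a factor the induced pair lies in $\aut{S}$ rather than $S$ and is not uniformly distributed over $S\times S$, so the per-factor bound $\sigma(S)$ is not the relevant quantity. You flag this bookkeeping as the main obstacle but offer no route through it; as written, the argument would establish the theorem only for groups whose unique minimal normal subgroup is simple, and only modulo the unproved simple-group and almost-simple estimates.
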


In addition they proved similar results showing that the properties of nilpotency and having odd order are 
also guaranteed if a sufficient proportion of element pairs generate subgroups with these properties, 
namely more than $\frac 12$ for nilpotency, and more than $\frac {11}{30}$ for having odd order.

In contrast to this, in a paper published in 2009, Gordeev, Grune\-wald, Kunyavski\v i and Plotkin \cite{GGKP} 
proved a solvability criterion which involved $2$-generation within each conjugacy class.
This result was also proved independently   by Guest in \cite[Corollary 1]{G}.

\begin{theorem}
A finite group $G$ is solvable if and only if, for each conjugacy class $C$ of $G$, each pair of elements of $C$ 
generates a solvable subgroup.
\end{theorem}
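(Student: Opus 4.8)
The forward implication is immediate: every subgroup of a solvable group is solvable, so in particular every pair of elements --- conjugate or not --- generates a solvable subgroup. The substance is the converse, which I would prove by a minimal-counterexample argument. The first step is to record that the hypothesis is equivalent to the condition that $\gen{x, x^g}$ is solvable for \emph{all} $x, g \in G$: two elements of a class $C$ have the form $x^{g_1}, x^{g_2}$, and $\gen{x^{g_1}, x^{g_2}} = \gen{x, x^{g_2 g_1^{-1}}}^{g_1}$, so solvability of all such subgroups is exactly the stated per-class condition. In this form the hypothesis is manifestly inherited by subgroups (if $x, g \in H \le G$ then $\gen{x, x^g}$ is solvable by the hypothesis on $G$) and by quotients (in $G/N$, the subgroup $\gen{\bar x, \bar x^{\bar g}}$ is the image of the solvable group $\gen{x, x^g}$).

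Next I would let $G$ be a counterexample of least order. By the inheritance just noted, every proper subgroup and every proper quotient of $G$ satisfies the hypothesis and hence, by minimality, is solvable. If $G$ possessed a normal subgroup $N$ with $1 \ne N \ne G$, then $N$ (a proper subgroup) and $G/N$ (a proper quotient) would both be solvable, forcing $G$ itself to be solvable --- a contradiction. Therefore $G$ is simple, and, being nonsolvable, it is nonabelian simple.

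This reduces the theorem to the following statement about simple groups, which is where the real work lies: \emph{every finite nonabelian simple group contains an element $x$ and a conjugate $y = x^g$ such that $\gen{x, y}$ is nonsolvable}. Applied to the minimal counterexample $G$, it contradicts the hypothesis and finishes the proof. The cleanest target is in fact the stronger assertion that every finite nonabelian simple group is \emph{generated} by two conjugate elements, for then $\gen{x, y} = G$ is certainly nonsolvable. I would establish this using the classification of finite simple groups, treating the alternating groups, the sporadic groups, and the groups of Lie type in turn, in each case exhibiting a conjugacy class containing a generating pair (for the Lie-type families one can draw on the probabilistic generation and \emph{spread} results of Guralnick--Kantor and their successors).

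The main obstacle is precisely this last, classification-dependent step. Uniform constructions of a generating pair of conjugate elements break down for the groups of small Lie rank and over small fields, and these must be handled as exceptional cases, either by direct computation or by settling for a conjugate pair that generates a nonsolvable \emph{proper} subgroup rather than all of $G$ --- which still suffices for the contradiction. One might hope to shortcut the analysis by invoking a result in the spirit of Theorem~B; the difficulty is that Theorem~B produces elements of two prescribed (and possibly distinct) orders that need not be conjugate, whereas here the two generators must lie in a single conjugacy class, so Theorem~B does not apply directly.
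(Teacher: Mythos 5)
This statement is one the paper quotes from the literature (it is the theorem of Gordeev, Grunewald, Kunyavskii and Plotkin \cite{GGKP}, proved independently by Guest \cite{G}) and does not prove itself, so there is no in-paper argument to set yours against line by line; the fairest comparison is with the machinery the paper builds for its own Theorem~A. Your reduction is correct and complete as far as it goes: the reformulation of the hypothesis as ``$\gen{x,x^g}$ is solvable for all $x,g\in G$'' is right, that condition visibly passes to subgroups and to quotients, and the minimal-counterexample argument therefore collapses the problem to a single nonabelian simple group in which one must exhibit a conjugate pair generating a nonsolvable subgroup. It is worth noting that this clean reduction to a \emph{simple} minimal counterexample is exactly what the paper's Theorem~A cannot do, because its weaker hypothesis (existence of one good conjugate) is not inherited by subgroups; that is why the paper instead descends only to a minimal normal subgroup $M=S^k$ and argues with diagonal elements and Theorem~B. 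You are also right that Theorem~B cannot be used as a shortcut here, since it produces elements of two prescribed and generally distinct orders, which need not be conjugate. What remains in your sketch --- that every finite nonabelian simple group contains two conjugate elements generating a nonsolvable subgroup, or even the whole group --- is genuinely the entire content of the theorem and is classification-dependent; it does follow from the Guralnick--Kantor probabilistic generation theorem (reference \cite{GK} of this paper), which provides a conjugacy class $C$ such that every nontrivial element, in particular any $x\in C$, generates $G$ together with some $y\in C$. So your outline is sound and your identification of the key remaining fact is accurate, but a self-contained proof would require either that citation or the case-by-case analysis you defer.
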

 
A stronger result of this type was obtained recently by Kaplan and Levy in \cite[Theorem 4]{KL}. 
Their criterion involves
only a limited $2$-generation within the conjugacy classes of elements of odd prime-power order.

\begin{theorem}
A finite group $G$ is solvable if and only if for every $x,y\in G$, with $x$ a $p$-element for some odd
prime $p$ and $y$ a $2$-element, the group $\gen{x,x^y}$ is solvable.
\end{theorem}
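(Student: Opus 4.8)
The plan is to prove the substantive implication — that the stated condition forces solvability — by a minimal-counterexample argument reducing to Thompson's classification of minimal simple groups, and then to exhibit in each such group a $p$-element $x$ (for an odd prime $p$) together with a $2$-element $y$ for which $\gen{x,x^y}$ is nonsolvable; the reverse implication is immediate, as subgroups of solvable groups are solvable. First I would verify that the hypothesis is inherited by subgroups and by quotients. For a subgroup $H\le G$ this is automatic, since for a $p$-element $x\in H$ and a $2$-element $y\in H$ the group $\gen{x,x^y}$ is literally the same in $H$ as in $G$. For a quotient $\bar G=G/N$ I would invoke the elementary lifting lemma that every $\pi$-element of $G/N$ is the exact image of a $\pi$-element of $G$ (split off the $\pi'$-part of a cyclic preimage). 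Hence an arbitrary $p$-element $\bar x$ and $2$-element $\bar y$ of $\bar G$ have the form $\bar x=xN$, $\bar y=yN$ with $x$ a $p$-element and $y$ a $2$-element of $G$; then $\gen{\bar x,\bar x^{\bar y}}$ is the image of the solvable group $\gen{x,x^y}$, hence is solvable.

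Now let $G$ be a counterexample of smallest order. By the previous step every proper subgroup and every proper quotient of $G$ inherits the hypothesis and hence, by minimality, is solvable. Were there a normal subgroup $N$ with $1\ne N\ne G$, both $N$ and $G/N$ would be solvable and so would $G$; thus $G$ is simple, and being nonsolvable it is nonabelian simple with every proper subgroup solvable — a \emph{minimal simple group}. By Thompson's classification \cite{T}, $G$ is one of $\PSL(2,2^p)$ ($p$ prime), $\PSL(2,3^p)$ ($p$ an odd prime), $\PSL(2,p)$ ($p>3$ prime, $p\equiv\pm2\pmod5$), $\mathrm{Sz}(2^p)$ ($p$ an odd prime), or $\PSL(3,3)$. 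As every proper subgroup of $G$ is solvable, it now suffices to find a $p$-element $x$ and a $2$-element $y$ with $\gen{x,x^y}=G$.

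For $\PSL(2,p)$ this is transparent: let $x$ be the image of $\left(\begin{smallmatrix}1&1\\0&1\end{smallmatrix}\right)$, of odd prime order $p$, and $y$ the order-$2$ image of $\left(\begin{smallmatrix}0&1\\-1&0\end{smallmatrix}\right)$; then $x^y$ is a lower transvection, and, the prime field being $\F_p$, the elements $x$ and $x^y$ already generate the full upper and lower unipotent subgroups, so $\gen{x,x^y}=\PSL(2,p)$. For the other Lie-type groups the transvection trick fails — the characteristic may be even, or the prime field too small to fill out a unipotent subgroup — so instead I would take $x$ of \emph{primitive prime divisor} order $r$ (a prime dividing the relevant $q^k-1$ but no smaller one, supplied by Zsigmondy's theorem apart from a handful of small cases to be checked by hand). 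Such an $x$ lies in a unique maximal torus $T$, and the proper subgroups containing it are confined essentially to $\norm{G}{T}$; choosing a $2$-element $y$ with $T^y\ne T$ and comparing with the known maximal-subgroup lists of $\PSL(2,q)$ and of $\mathrm{Sz}(2^p)$ shows that $x$ and $x^y$ lie in no common proper subgroup, forcing $\gen{x,x^y}=G$. Finally $\PSL(3,3)$ is settled by a direct check, say with $x$ a Singer element of order $13$ and $y$ a $2$-element, against its short list of maximal subgroups.

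The crux — and the main obstacle — is this last step, made delicate by the requirement that the conjugator $y$ be a \emph{$2$-element}. While $x$ may be placed at will in a convenient torus, its partner $x^y$ is not free: it ranges only over the $2$-element conjugates of $x$. The heart of the argument is to show that within this restricted orbit one can already reach an element in sufficiently general position — concretely, that some $2$-element carries $T$ to a torus $T^y$ meeting it so little that no proper, and hence solvable, subgroup contains both $x$ and $x^y$. Establishing this ``general position via a $2$-element conjugator'' uniformly across the $\PSL(2,q)$ and Suzuki families, together with the finite list of small exceptional parameters, is where essentially all the work resides; the reduction to minimal simple groups is, by comparison, routine.
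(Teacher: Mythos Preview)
The paper does not prove this theorem. It is quoted in the introductory survey of ``Thompson-like'' results (Section~1.1) as \cite[Theorem~4]{KL}, a result of Kaplan and Levy, and no proof is given or sketched in the paper. There is therefore no proof in the paper against which to compare your proposal.

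That said, a few remarks on your outline. The reduction is sound: the hypothesis passes to subgroups trivially and to quotients via the standard lifting of $\pi$-elements, and a minimal counterexample is then a nonabelian simple group all of whose proper subgroups are solvable, hence a minimal simple group in Thompson's sense. Your handling of $\PSL(2,p)$ is correct. What remains is exactly what you flag as the crux: for each of $\PSL(2,2^p)$, $\PSL(2,3^p)$, ${}^2B_2(2^p)$ and $\PSL(3,3)$ you must actually exhibit an odd-prime-power element $x$ and a $2$-element $y$ with $\gen{x,x^y}=G$. You describe this as ``choosing a $2$-element $y$ with $T^y\ne T$'' and then appealing to maximal-subgroup lists, but this is precisely the step that needs a real argument, not a gesture. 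In $\PSL(2,q)$ the proper subgroups containing an element of order $r\in\pd{q}{2}$ (or of order $q+1$) are not confined to $\norm{G}{T}$ alone --- dihedral overgroups and, for small $q$, exceptional $A_4$, $S_4$, $A_5$ subgroups intervene --- so the claim that $T^y\ne T$ suffices is too quick; you need to rule out $x$ and $x^y$ landing together in one of these. Likewise in the Suzuki groups you must check against the full list of maximal subgroups from \cite{S}. None of this is deep, but it is genuine case-work that your proposal promises rather than performs; as written, the proof is an outline whose decisive computations are deferred.
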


Our requirement, while ranging over all conjugacy classes, requires only \emph{existence} of a solvable 2-generator 
subgroup with one generator from each of two (possibly equal) classes. We know of no similar criteria in this respect.

The forth result we draw attention to is in a 2006 paper of Guralnick, Kunyavski, Plotkin and Shalev \cite{GKPS}. 
They proved that membership of the solvable radical of a finite group is characterised by solvability 
of certain $2$-generator subgroups. (The solvable radical $R(G)$ of a finite group $G$ is the largest 
solvable normal subgroup of $G$.)

\begin{theorem}
For a finite group $G$, the solvable radical $R(G)$ coincides with the set of all elements $x\in G$ with the property: ``
for any $y\in G$, the subgroup $\gen{ x,y}$ is solvable".
\end{theorem}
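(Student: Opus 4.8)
The plan is to prove the two inclusions separately, writing $S$ for the set of all $x\in G$ such that $\gen{x,y}$ is solvable for every $y\in G$; the inclusion $R(G)\sbs S$ is elementary and the reverse inclusion $S\sbs R(G)$ carries all the weight. For $R(G)\sbs S$: given $x\in R(G)$ and $y\in G$, put $H=\gen{x,y}$. Then $H\cap R(G)$ is solvable (a subgroup of the solvable group $R(G)$), while $H/(H\cap R(G))\cong HR(G)/R(G)=\gen{yR(G)}$ is cyclic; an extension of a solvable group by a cyclic group is solvable, so $x\in S$.

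For $S\sbs R(G)$ I would argue contrapositively, producing for each $x\notin R(G)$ some $y\in G$ with $\gen{x,y}$ nonsolvable. First I would reduce to the case $R(G)=1$: passing to $\overline G=G/R(G)$, the image $\overline x$ is nontrivial, and if some $\overline y$ makes $\gen{\overline x,\overline y}$ nonsolvable then any preimage $y$ makes $\gen{x,y}$ nonsolvable, since a group with a nonsolvable quotient is nonsolvable. So assume $R(G)=1$. Then $\fit{G}=1$, hence the generalised Fitting subgroup equals the socle $M=\mathrm{soc}(G)=T_1\times\cdots\times T_k$, a direct product of nonabelian simple groups, and $\cent{G}{M}=1$ (using $\cent{G}{F^*(G)}\sbs F^*(G)$). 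As $x\neq1$, it does not centralise $M$, so there is a component $T=T_1$ with $[x,T]\neq1$.

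Next I would split according to the action of $x$ on the $\gen{x}$-orbit $\Delta=\{T_1,T_1^{x},T_1^{x^2},\dots\}$ of $T_1$, of length $r$, with product $B=\prod_{S\in\Delta}S$. If $r=1$, then $x$ normalises $T_1$ and induces a nontrivial automorphism $\alpha\in\aut{T_1}$; for any $t\in T_1$ the conjugation action gives a homomorphism $\rho\colon\gen{x,t}\to\aut{T_1}$ with $\rho(x)=\alpha$ and $\rho(t)$ the inner automorphism $\mathrm{inn}(t)$, so it suffices to choose $t\in T_1$ with $\gen{\alpha,\mathrm{inn}(t)}$ nonsolvable. If $r>1$, then $x$ permutes the factors of $B$ in an $r$-cycle; choosing $y\in B$ so that the projection to $T_1$ of $\gen{x,y}\cap B$ is all of $T_1$ (possible since each component is $2$-generated and the cyclic shift lets one combine coordinates) makes $\gen{x,y}\cap B$, hence $\gen{x,y}$, nonsolvable.

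The crux, and the main obstacle, is the simple-group input required when $r=1$, namely the \emph{key claim}: for every finite nonabelian simple group $T$ and every nontrivial $\alpha\in\aut{T}$, there exists $t\in T$ with $\gen{\alpha,\mathrm{inn}(t)}\le\aut{T}$ nonsolvable. When $\alpha=\mathrm{inn}(g)$ is inner (so $g\neq1$), this follows from the theorem of Guralnick and Kantor that nonabelian simple groups are $\tfrac32$-generated: some $t$ satisfies $\gen{g,t}=T$, whence $\gen{\mathrm{inn}(g),\mathrm{inn}(t)}=\mathrm{Inn}(T)\cong T$ is nonsolvable. The genuinely hard case is outer $\alpha$ — the field, graph and diagonal automorphisms of the groups of Lie type, and the outer automorphisms of the alternating and sporadic groups — where one must exhibit, family by family via the classification, an element $t$ for which $\gen{\alpha,\mathrm{inn}(t)}$ contains a nonabelian simple section. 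This CFSG-dependent analysis is where essentially all the difficulty lies; granting the key claim, the structural reduction above completes the proof.
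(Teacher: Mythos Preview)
The paper does not prove this statement at all: it appears in the introductory survey subsection ``Other generalisations of Thompson's theorem'' as a result quoted from Guralnick, Kunyavski\u{\i}, Plotkin and Shalev~\cite{GKPS}, with no proof given. There is therefore no ``paper's own proof'' to compare your proposal against.

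That said, your outline is essentially the standard reduction used in~\cite{GKPS} and related work. The inclusion $R(G)\subseteq S$ and the passage to $R(G)=1$ are correct. In the case $r>1$ your sketch is a little thin but can be made precise: with $\gen{s,t}=T_1$, take $y=(s,1,\dots,1,t^{x^{-1}})\in B$; then $\pi_1(y)=s$ and $\pi_1(y^{x})=t$, so $\pi_1(\gen{y,y^{x}})=T_1$, whence $\gen{x,y}\cap B$ surjects onto $T_1$ and $\gen{x,y}$ is nonsolvable. You have correctly identified the real content as the ``key claim'' for $r=1$: for every nonabelian simple $T$ and every $1\ne\alpha\in\aut T$ there exists $t\in T$ with $\gen{\alpha,\mathrm{inn}(t)}$ nonsolvable. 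This is exactly the almost-simple input that~\cite{GKPS} establishes via the classification (and for inner $\alpha$ it does reduce to $\tfrac32$-generation). Your proposal is thus a faithful outline of the published argument, not an independent proof, since you explicitly grant the key claim rather than verify it.
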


\section{Alternating and sporadic simple groups}
\label{sec:altspor}

Theorem~B for the alternating groups follows from the following proposition.

\begin{proposition}\label{prop3.4}
If $m \geq 5$, then there exist distinct
primes $p$ and $q$ satisfying
$m/2< p< q  \leq  m$ such that,
for all $x,y\in A_m$ with $|x|=p$ and $|y|=b$, the subgroup
$\langle x,y\rangle \cong A_d$ for some  $d\geq5$.
In particular, $\langle x,y\rangle $ is nonsolvable.
\end{proposition}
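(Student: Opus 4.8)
The plan is to reduce everything to Jordan's classical theorem: a primitive subgroup of $S_d$ that contains a cycle of prime length $\ell\le d-3$ must contain $A_d$. The whole point of the constraint $m/2<p<q\le m$ is that it forces the cycle structure of $x$ and $y$ to be as simple as possible. Indeed, if $p$ is a prime with $m/2<p\le m$, then an element of order $p$ in $A_m$ can only be a single $p$-cycle: a second disjoint $p$-cycle would need $2p\le m$, which is excluded, and a single $p$-cycle is even because $p$ is odd (from $p>m/2\ge 5/2$ we get $p\ge3$). So the first task is to produce two primes in the half-open interval $(m/2,m]$. This follows for large $m$ from the growth of $\pi(m)-\pi(m/2)$, and more carefully from a Bertrand-type estimate; I expect it to hold for every $m\ge5$ except a short list of small values where $m/2$ is itself prime and the strict inequality removes it (for instance $m=6,10$), which I would verify directly.

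Fix a $p$-cycle $x$ and a $q$-cycle $y$, put $G=\langle x,y\rangle$, and let $\Delta=\operatorname{supp}(x)\cup\operatorname{supp}(y)$, of size $d$ with $q\le d\le m$. Since $G$ fixes every point outside $\Delta$ pointwise, the restriction $G\to\operatorname{Sym}(\Delta)$ is injective, so it is enough to study the action on $\Delta$. From $|\operatorname{supp}(x)|+|\operatorname{supp}(y)|=p+q>m\ge d$ the two supports meet, and as $x,y$ are each transitive on their own support, $G$ is transitive on $\Delta$. On $\Delta$ the element $x$ is a $p$-cycle with $d/2<p<d$ (using $p>m/2\ge d/2$ and $d\ge q>p$), and a short block argument then shows $G$ is primitive: a nontrivial $\langle x\rangle$-orbit of blocks would have length $p$, impossible since there are at most $d/2<p$ blocks, while if $x$ fixed every block setwise its $p$-cycle would sit inside a single block of size at most $d/2<p$.

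With primitivity in hand, Jordan's theorem applies as soon as $p\le d-3$. Since $|\operatorname{supp}(x)\cap\operatorname{supp}(y)|\le p$ we have $d=p+q-|\operatorname{supp}(x)\cap\operatorname{supp}(y)|\ge q\ge p+2$, and a moment's checking shows that $p\le d-3$ fails only in the single extreme configuration $d=q=p+2$, that is, when $\operatorname{supp}(x)\subseteq\operatorname{supp}(y)$ and $p,q$ are twin primes. In every other case Jordan yields $A_d\le G$; as $x$ and $y$ are even we conclude $G\cong A_d$ with $d\ge q\ge5$, which is exactly the assertion (and gives nonsolvability).

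The main obstacle is the degenerate twin-prime case $d=q=p+2$, where Jordan's bound just fails and $G$ is a transitive group of prime degree $q$ containing both a $q$-cycle and a $(q-2)$-cycle. I would handle it by Burnside's theorem on groups of prime degree: such a group is either contained in $\mathrm{AGL}(1,q)$ or is $2$-transitive, and the first option is impossible because a nonidentity affine transformation of $\mathbb{F}_q$ fixes at most one point, whereas the $(q-2)$-cycle fixes two. Thus $G$ is $2$-transitive, and by the classification of $2$-transitive groups of prime degree (a consequence of CFSG) the only possibilities are $A_q$, $S_q$, and a few exceptional families ($\mathrm{PSL}$ acting on projective points, together with $\mathrm{PSL}(2,11)$, $M_{11}$ and $M_{23}$); one checks that none of the exceptional groups contains an element of order $q-2$, while $G\le A_q$ rules out $S_q$, leaving $G=A_q$. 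This CFSG-based verification is where the real work of the proposition lies.
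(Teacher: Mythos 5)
Your argument follows the same skeleton as the paper's proof --- force $x$ and $y$ to be single cycles of prime length exceeding half the degree, deduce transitivity and then primitivity of $\langle x,y\rangle$ on its support $\Delta$, and invoke Jordan's theorem --- but the two proofs diverge exactly where Jordan's hypothesis $p\le d-3$ can fail. The paper avoids that configuration altogether: from $\pi(2n)-\pi(n)>n/(3\ln n)$ it extracts \emph{three} primes in $(m/2,m]$ once $m\ge 34$, hence a pair with $q>p+3$, so $d\ge q\ge p+4$ and Jordan always applies; the cases $5\le m\le 33$ are then settled by choosing explicit primes and inspecting maximal subgroups in the ATLAS. You instead accept an arbitrary pair of primes and attack the unique exceptional configuration $d=q=p+2$ (twin primes, $\mathrm{supp}(x)\subseteq\mathrm{supp}(y)$) directly, via Burnside's theorem together with the CFSG classification of $2$-transitive groups of prime degree; your observation that the $(q-2)$-cycle fixes two points, killing the affine case, is the right key, and the order check on the exceptional $2$-transitive groups does go through for the relevant twin-prime degrees. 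Your route buys a much shorter small-$m$ case analysis at the price of importing the (CFSG-dependent) prime-degree classification --- an acceptable trade here, since the paper relies on CFSG elsewhere in any event. One loose end to tie up: for $m=6$ and $m=10$ there is only \emph{one} prime strictly between $m/2$ and $m$, so (as in the paper's own proof, which quietly weakens the condition to $m/2\le p$) you must take $p=m/2$; but then an element of order $p$ need not be a single $p$-cycle (a product of two $3$-cycles in $A_6$, or of two $5$-cycles in $A_{10}$), so your structural argument does not apply verbatim and those values genuinely require the direct verification you promise --- precisely what the paper's ATLAS checks supply.
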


\begin{proof}
First we remark that if $n$ is a positive integer and
$\pi(n)$ denotes the number of primes at most $n$, then the following
is known (see, for example \cite[p.188]{R}):
$$
\pi(2n) - \pi(n) > n/(3 \ln n)\qquad\text{for}\ \ n \geq 5.$$
In particular, if $n \geq  17$, then
$$
\pi(2n) - \pi(n) >17/(3 \ln 17) > 2,
$$
which implies that $\pi(2n) - \pi(n) \geq 3$.
Thus, if $n \geq 17$, then there are at least
$3$ distinct primes $p$, $r$ and $q$
satisfying $n<p<r<q\leq 2n$.
In particular, $n< p < q-3 <q\leq 2n$.
Hence, if $m \geq 34$,
then there exist primes $p$ and $q$
such that $m/2 < p < q-3  <q\leq m$, and elements
$x, y\in A_m$ of order $p$ and $q$, respectively. Let
$x,y$ be any such elements and let $H=\langle x,y\rangle$.
Let  $\Delta$ be the support of $H$, that is, 
the subset of $\{1,2,\dots,m\}$ consisting 
of all the points moved by $H$, and let $d:=|\Delta|$.
Since $q>p>m/2$, it follows that $H$ is transitive 
on $\Delta$, and that  $q\leq d \leq m <2p$.
By \cite[Theorem 8.4]{Wi},
$H$ is primitive on $\Delta$. Moreover, since
$d \geq q > p+3>5$  and $H$ contains only
even permutations, it follows from a theorem of C. Jordan dating from 1873, see 
\cite[Theorem 13.9]{Wi}, that $H\cong A_d$
for some $d\geq 5$, as claimed.

It remains to deal with $A_m$,
for $5\leq m \leq 33$. In each case we will choose primes $p$ and $q$ 
such that $m/2\leq 
p<q\leq m$, and consider the subgroup $H= \langle x,y\rangle$
generated by elements $x$ and $y$ of $A_m$ of 
order $p$ and $q$, respectively. Denote by  $\Delta$  the support of $H$, and
let $d = |\Delta|$. In all cases $d\geq5$ since $d\geq q>p\geq 3$, 
and hence $A_d$ is nonsolvable.

For $17\leq m\leq 33$, and for $11\leq m\leq 13$, let $q$ be the largest 
prime such that $q\leq m$, and let $p$ be the smallest prime such that
$p>m/2$. Then $p\leq q-3$, and the argument above shows that $H\cong A_d$.

If $14\leq m\leq 16$, let $q=13$, $p=11$. If $d=13$, then 
$H \leq A_{13}$ and since by the \cite[p. 104]{ATLAS} no maximal 
subgroup of $A_{13}$ has order divisible by $11\cdot 13$, it 
follows that  $H=A_{13}$. If $d>13$, then as before, $H$ is a primitive group
on $\Delta$, and since $d=p+k$ with $k\geq 3$, it
follows by \cite[Theorem 13.9]{Wi} that $H\cong A_d$.

If $7\leq m \leq 10$, let $q=7$, $p=5$. It follows from the
lists of maximal subgroups of $A_d$ in \cite[pp. 10, 22, 37, 48]{ATLAS} that
$H\cong A_d$. 
If $m = 5,\ 6$, let $q=5$, $p=3$. It follows from the lists
of maximal subgroups of these groups in \cite[pp. 2, 4]{ATLAS} that
$H=A_5$ if $m=5$, and that $H\cong A_5$ or $A_6$ if $m=6$.
The proof of Proposition~\ref{prop3.4} is complete.
\end{proof}

Theorem~B for the sporadic simple groups follows from  the following 
proposition. For compactness of notation, we write $L_2(q)$ instead 
of $\PSL(2,q)$ in Table~\ref{sporad}.

\begin{proposition}\label{prop3.5}
Let $S$ be a sporadic simple group as in one of the rows of 
Table~{\rm\ref{sporad}}. Then for the primes $p, q$ in the corresponding
row of Table~{\rm\ref{sporad}}, 
$p,q \in oe(S)$ and, for all $x,y \in S$ 
with $|x| =p$ and $|y| = q$, the subgroup $\langle x,y\rangle$ is 
one of the nonabelian simple groups in the row for $S$ and column labeled 
$\langle x,y\rangle$ of Table~{\rm\ref{sporad}}.
\end{proposition}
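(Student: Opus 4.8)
The plan is to prove Proposition~\ref{prop3.5} by a case-by-case verification across the twenty-six sporadic simple groups, relying on the same structural idea that drove the alternating-group argument: for a carefully chosen pair of primes $p, q \in \eo{S}$, any subgroup $H = \gen{x,y}$ with $|x| = p$ and $|y| = q$ is forced to lie inside, and in fact to equal, a nonabelian simple group. The key observation is that if we pick $p$ and $q$ so that the product $pq$ (or a suitable power-divisibility condition) is \emph{not} divisible into the order of any proper subgroup of $S$ except those of a very restricted form, then $H$ cannot be proper unless it is one of the listed simple groups. So the first step is to set up, for each row of Table~\ref{sporad}, the arithmetic constraint $|H|$ must satisfy: since $x, y \in H$, the order $|H|$ is divisible by $\mathrm{lcm}(p,q) = pq$ (as $p \neq q$ are primes).

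Next I would consult the lists of maximal subgroups of each sporadic group $S$, as tabulated in \cite{ATLAS}, to identify exactly which maximal subgroups $M$ of $S$ have order divisible by $pq$. The primes $p, q$ in the table are chosen precisely so that this list is short and every such $M$ is itself a known almost-simple or simple group (typically the $\PSL(2,q)$ appearing in the $\gen{x,y}$ column). The heart of the argument is then: $H$ is contained in some maximal subgroup $M$ with $pq \mid |M|$; by the inductive/recursive structure of the maximal subgroup lattice, one descends until $H$ is pinned down to a specific nonabelian simple group; finally one checks that $H$ is not proper within that group by verifying no smaller subgroup of it contains elements of both orders $p$ and $q$ simultaneously while being solvable. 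Concretely, for each candidate simple group $L$ (usually $\PSL(2,q)$ for small $q$, or an alternating group, or $S$ itself), I would confirm using its element-order spectrum and maximal-subgroup data that the only subgroup generated by an element of order $p$ and an element of order $q$ is $L$ itself, which is simple and hence nonsolvable.

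The main obstacle will be the sheer bookkeeping: there are twenty-six groups, and for several of the large ones (such as the Monster, Baby Monster, and the Fischer groups) the maximal-subgroup data is extensive and, historically, was not fully determined when this paper was written, so some rows may require ad hoc arguments or appeal to partial classification results and character-table computations rather than a clean maximal-subgroup list. For these cases the fallback is to choose $p, q$ large enough (for instance two large primes dividing $|S|$ to the first power) that $pq$ forces $H$ to be the whole group $S$ directly, sidestepping the need for a complete subgroup analysis. The delicate point is ensuring that for \emph{every} choice of $x$ of order $p$ and $y$ of order $q$ — not merely for some choice — the subgroup $\gen{x,y}$ is nonsolvable; this is exactly why the prime pair must be selected so that the divisibility obstruction applies uniformly, independently of the conjugacy classes of $x$ and $y$. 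Once the table entries are justified row by row, the proposition follows immediately, since every entry in the $\gen{x,y}$ column is a nonabelian simple group.
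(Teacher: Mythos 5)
Your proposal matches the paper's own argument: the paper likewise proves each row by descending through the maximal-subgroup lists in the ATLAS, using divisibility of $|\langle x,y\rangle|$ by $pq$ to restrict the possible overgroups at each level until only the listed nonabelian simple groups remain (the paper works out the $HS$ row in detail as a template and asserts the remaining rows by the same examination). Your fallback of choosing large primes so that few maximal subgroups have order divisible by $pq$ is exactly what the table's choices (e.g.\ $q=47$ for $B$, $q=59$ for $M$) implement, so the approach is essentially identical.
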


\begin{table}
\begin{center}
\begin{small}
\begin{tabular}[!h]{|rrrl|rrrl|}
\hline
$S$&$p$&$q$&$\langle x,y\rangle$&$S$&$p$&$q$&$\langle x,y\rangle$\\ \hline
$M_{11}$& 2&11& $M_{11},L_2(11)$&$M_{12}$& 2&11& $M_{12},M_{11},L_2(11)$\\
$M_{22}$& 2&11& $M_{22},L_2(11)$&$M_{23}$& 2&23& $M_{23}$\\
$M_{24}$& 2&23& $M_{24},L_2(23)$&$J_1$   & 5&19& $J_1$\\
$J_2$   & 2& 7& $J_2, L_2(7)$   &$J_3$   & 2&19& $J_3, L_2(19)$\\
$J_4$   & 3&43& $J_4$           &$HS$    & 2&11& $HS,M_{11},L_2(11),M_{22}$\\
$He$    & 7&17& $He$            &$McL$   & 2&11& $McL,M_{11},M_{12},L_2(11)$\\
$Suz$   &11&13& $Suz$           &$Ly$    & 3&67& $Ly$\\
$Ru$    & 3&29& $Ru$            &$O'N$   & 2&31& $O'N$\\
$Co_1$  &13&23& $Co_1$          &$Co_2$  & 2&23& $ Co_2,M_{23}$\\
$Co_3$  & 2&23& $Co_3,M_{23}$   &$Fi_{22}$&11&13& $Fi_{22}$\\
$Fi_{23}$ & 2&23& $Fi_{23},M_{23},L_2(23)$&$Fi'_{24}$& 3&29& $Fi'_{24}$\\
$HN$      & 5&19& $HN$                    &$Th$ &19&31&$Th$\\
$B$     & 2&47  &$B$                      &$M$  & 2&59&$M,L_2(59)$\\
\hline
\end{tabular}
\end{small}
\caption{Results table for Proposition~\ref{prop3.5}}\label{sporad}  
\end{center}
\end{table}

\begin{proof}
The proof uses heavily the lists of maximal subgroups of the sporadic
simple groups and of other simple groups, which appear  in \cite{ATLAS} 
and in \cite{ATLAS3}.
For all sporadic simple groups  
our results follow from a close examination of these lists. In particular
the groups listed in the column labeled $\langle x,y\rangle$ are 
the only subgroups which could possibly 
be generated by two elements, the first of order $p$ 
and the second of order $q$.

As an example, we describe in detail our treatment of the Higman--Sims 
sporadic group $HS$ of order $2^9.3^2.5^3.7.11$.
Having checked the maximal subgroups of $HS$, we choose the primes
$p=2$ and $q=11$. If the subgroup $X=\langle x,y\rangle$ is not equal to 
$HS$, then $X$ is contained in a maximal subgroup of $HS$ of order 
divisible by $11$. We see from \cite[p.\,80]{ATLAS} that the only such maximal
subgroups of $HS$ are the simple groups $M_{22}$ and $M_{11}$.
Suppose first that $X$ is a subgroup of $M_{11}$. If $X\neq M_{11}$,
then it is contained in a maximal subgroup of $M_{11}$ of order
 divisible by $11$. By \cite[p.\,18]{ATLAS}, each
such maximal subgroup of $M_{11}$ is isomorphic to the simple group $L_2(11)$. 
If $X\neq L_2(11)$, then $X$ is contained
in a maximal subgroup of $L_2(11)$ of order divisible by $11$.
However,  such maximal subgroups have order $5\cdot 11$, which 
is not divisible by $2$. Thus we are
left with the possibilities: $X=HS$, $X=M_{11}$, $X=L_2(11)$, or 
$X$ is a subgroup of $M_{22}$. If $X$ is a proper subgroup of $M_{22}$, 
then $X$ is contained in a maximal subgroup
of $M_{22}$ of order divisible by $11$. By \cite[p.\,30]{ATLAS}, 
the only such maximal subgroup of $M_{22}$ is the simple group $L_2(11)$, 
which we have already examined. Thus, finally, $X$ is one of the
simple groups $HS$, $M_{11}$, $L_2(11)$ or $M_{22}$, as in 
Table~\ref{sporad}. 
\par
Thus Proposition~\ref{prop3.5} is proved.
\end{proof}

\section{Primitive prime divisors}
In the following,  $q = p^k$ is a power of a prime $p$.
For any positive integer $e$, we say that a prime $r$ is a
\emph{primitive prime divisor} of $q^e -1$ if $r$ divides $q^e -1$
and $r$ does not divide $q^i -1$ for any positive integer $i < e$.
Observe that then $e$ is the order of $q$  modulo the prime $r$; so  $e$ divides $r-1$ and, in particular, $r \geq e+1$.
The set of primitive prime divisors of $q^e -1$
will be denoted by $\pd q e$.

We say that a prime $r$ is a
\emph{basic} primitive prime divisor of $q^e -1$, if $r$ is a primitive
prime divisor of $p^{ke} -1$, that is to say, if $r\in\pd p {ek}$.
We denote by $\bpd q e$ the set of
basic primitive prime divisors of $q^e -1$.
Note that $\bpd qe \subseteq \pd qe$, and that the inclusion can be strict;
for example, $\pd {2^2}3 = \{7\}$, but $\bpd {2^2}3 = \pd 2 6 = \emptyset$.
The following result of Zsigmondy \cite{Z} will be used frequently.
\begin{theorem}\label{Z}
Let $q\geq 2$ and $e\geq 2$. Then $\bpd q e =\emptyset$ if and only if one of the following holds.
\begin{description}
\item [(i)] $q$ is a Mersenne prime, $e=2$, and here $\pd q 2 =\emptyset$;
\item[(ii)]  $(q,e)=(2,6)$, and here $\pd 2 6  =\emptyset$;
\item [(iii)] $(q,e)\in \{(4,3),(8,2)\}$, and here $\pd 4 3 =\{7\}$, and\\ $\pd 8 2 =\{3\}$.
\end{description}
\end{theorem}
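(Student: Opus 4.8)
The plan is to reduce the statement to the classical theorem of Zsigmondy on primitive prime divisors of $p^n-1$, and then to translate its list of exceptions into conditions on the pair $(q,e)$. Writing $q=p^k$ with $p$ prime and $k\geq1$, the definition of basic primitive prime divisor gives $\bpd q e=\pd p {ek}$ directly, so everything reduces to deciding when $p^n-1$ has no primitive prime divisor, where $n:=ek$. Since $e\geq2$ and $k\geq1$ we have $n\geq2$, so the degenerate case $n=1$ (in which $\pd p 1=\emptyset$ exactly when $p=2$) never arises and can be ignored throughout.

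Next I would invoke Zsigmondy's theorem \cite{Z} in its standard form: for a prime $p$ and an integer $n\geq2$, the number $p^n-1$ possesses a primitive prime divisor, i.e.\ $\pd p n\neq\emptyset$, with exactly two families of exceptions --- (a) $n=2$ and $p+1$ is a power of $2$; and (b) $(p,n)=(2,6)$. If one wanted a self-contained argument rather than a citation, the standard route is through the factorisation $p^n-1=\prod_{d\mid n}\Phi_d(p)$ into cyclotomic polynomials, showing that any prime dividing $\Phi_n(p)$ is either primitive for $p^n-1$ or equals the largest prime divisor of $n$, and then bounding $\Phi_n(p)$ against that prime; this is where the real work lies, but it is entirely classical.

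It then remains to rewrite exceptions (a) and (b) in terms of $(q,e)$. For (a), $n=ek=2$ together with $e\geq2$ forces $e=2$ and $k=1$, so $q=p$; and for a prime $p$ the condition that $p+1$ be a power of $2$ is precisely the condition that $p$ be a Mersenne prime. This yields case~(i), and here $\pd q 2=\pd p 2=\bpd q 2=\emptyset$. For (b), $p=2$ and $ek=6$ with $e\geq2$ leave exactly the three pairs $(e,k)\in\{(2,3),(3,2),(6,1)\}$, giving $q=2^k\in\{8,4,2\}$ and hence $(q,e)\in\{(8,2),(4,3),(2,6)\}$; the pair $(2,6)$ is case~(ii) and the pairs $(4,3),(8,2)$ are case~(iii). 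Conversely each listed case falls under (a) or (b), so the equivalence is an exact restatement.

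Finally I would verify the parenthetical claims by direct computation. In case~(ii), $\pd 2 6=\emptyset$ is literally exception (b). In case~(iii), from $4^3-1=63=3^2\cdot7$ one checks that $7\nmid4^i-1$ for $i=1,2$ while $3\mid4-1$, whence $\pd 4 3=\{7\}$; and from $8^2-1=63=3^2\cdot7$ one checks that $3\nmid8-1$ while $7\mid8-1$, whence $\pd 8 2=\{3\}$. The only point requiring genuine care in the whole argument is the translation bookkeeping --- correctly matching the Mersenne-prime condition and splitting the $n=6$, $p=2$ exception across the factorisations $6=ek$ with $e\geq2$ --- since the analytic heart of the result is supplied by Zsigmondy's theorem itself.
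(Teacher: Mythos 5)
Your proposal is correct and follows essentially the same route as the paper, which states this result as Zsigmondy's theorem with only the citation \cite{Z} and no written proof; your contribution is simply to make explicit the translation $\bpd q e=\pd p{ek}$ for $q=p^k$ and the resulting bookkeeping of the exceptional pairs, all of which checks out (including the computations $\pd 4 3=\{7\}$ and $\pd 8 2=\{3\}$ from $63=3^2\cdot 7$). Nothing further is needed.
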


Next, we define the set $\lpd qe$ of \emph{large primitive divisors}
of $q^e-1$ to be the set consisting of primes $r\in \pd qe$ such
that $r>e+1$ together with the integer $(e+1)^2$ if $e+1\in\pd q e$ and $(e+1)^2$ divides $q^e-1$.
\par
Finally, we define the set $\lbpd qe$ of \emph{large basic primitive divisors}
of $q^e-1$ to be the set consisting of primes $r\in \bpd qe$ such
that $r>e+1$, together with the integer $(e+1)^2$ if $e+1\in\bpd qe$ and $(e+1)^2$ divides $q^e-1$.
\medskip
\par
The following observation will be very useful in the sequel.
\begin{proposition}\label{Lbpd}
Let $q\geq 2$ and $e\geq 3$. Then $\lbpd qe=\emptyset $ if and only if $(q,e)$ is one of the following:
$$
(2,4),(2,6),(2,10),(2,12),(2,18),(3,4),(3,6),(4,3),(5,6).
$$
\end{proposition}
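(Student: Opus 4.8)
The plan is to reduce the claim about $\lbpd qe$ to the Zsigmondy theorem (Theorem~\ref{Z}) together with an explicit analysis of the two ways a basic primitive prime divisor can fail to be \emph{large}. By definition, $\lbpd qe$ can be empty only if (a) $\bpd qe=\emptyset$, or (b) $\bpd qe$ is nonempty but every element fails the size/multiplicity conditions defining $\lbpd qe$. Since any prime $r\in\bpd qe$ satisfies $r\geq e+1$, the only basic primitive prime divisor that can fail $r>e+1$ is $r=e+1$ itself (which forces $e+1$ to be prime). Thus $\lbpd qe=\emptyset$ precisely when $\bpd qe\subseteq\{e+1\}$ and, moreover, if $e+1\in\bpd qe$ then the integer $(e+1)^2$ is \emph{not} adjoined, i.e.\ $(e+1)^2\nmid q^e-1$.

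First I would dispose of case (a). Restricting to $e\geq 3$, Theorem~\ref{Z} says $\bpd qe=\emptyset$ only in case (iii), giving the two pairs $(q,e)=(4,3)$ and $(8,2)$; the latter is excluded by $e\geq3$, so $(4,3)$ is the only contribution from this branch. (Case (i) needs $e=2$ and case (ii) is $(2,6)$, which I will recover in branch (b) since $\bbppd 26=\emptyset$ makes $\lbpd 26$ vacuously empty as well.) So $(4,3)$ belongs to the list.

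Next I would handle branch (b): $\bpd qe\ne\emptyset$ but the only basic primitive prime divisor is $e+1$, and $(e+1)^2\nmid q^e-1$. Here I would write $r=e+1$, a prime, and recall that $r\in\bpd qe$ means $r\in\ppd(p,ek)$ where $q=p^k$. The key quantitative tool is an estimate for the $r$-part of $q^e-1$. Writing $q=p^k$, one has $q^e-1=p^{ek}-1$, and since $r=e+1$ is a primitive prime divisor of $p^{ek}-1$ with $e$ equal to the multiplicative order of $q$ modulo $r$, the order of $p$ modulo $r$ divides $ek$; lifting-the-exponent type arguments bound the power of $r$ dividing $q^e-1$. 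The requirement that $(e+1)^2\nmid q^e-1$ together with $r\geq e+1$ forces $q$ to be small relative to $e$, so that $q^e-1$ simply cannot accommodate a second large primitive prime. Concretely, for fixed small primes $r=e+1\in\{5,7,11,13,19\}$ (these are exactly the primes one step above the relevant $e$) one checks directly which prime powers $q$ make $e+1$ the \emph{unique} basic primitive prime divisor and satisfy the square condition; this is a finite verification yielding the remaining pairs
$$(2,4),\ (2,6),\ (2,10),\ (2,12),\ (2,18),\ (3,4),\ (3,6),\ (5,6).$$

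The main obstacle is branch (b): one must show the list is \emph{complete}, i.e.\ that for all $(q,e)$ outside the tabulated pairs there is always either a basic primitive prime divisor exceeding $e+1$, or the primitive prime $e+1$ occurs to the second power in $q^e-1$. The cleanest route is to combine a lower bound on the size of $q^e-1$ (roughly, $q^e-1$ has at least two distinct primitive-type prime factors once $q$ or $e$ is large enough) with the Zsigmondy classification to confine attention to a small explicit range of $(q,e)$, after which the verification is a routine but careful case check using the factorizations of $q^e-1$. I would first prove a clean inequality forcing $e+1$ to be prime and $q$ bounded, then enumerate; the delicate points are precisely the borderline pairs where $(e+1)^2$ just barely fails to divide $q^e-1$, which is why those particular pairs, and no others, survive.
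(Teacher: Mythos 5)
Your reduction of the statement to the two failure modes is correct and matches the logical skeleton of the paper's argument: $\lbpd qe=\emptyset$ exactly when either $\bpd qe=\emptyset$, or $\bpd qe=\{e+1\}$ and $(e+1)^2\nmid q^e-1$; and your observation that any $r\in\bpd qe$ satisfies $r\geq e+1$, so that only $r=e+1$ can fail to be large, is exactly the point the paper makes (the paper additionally notes that $r=e+1$ forces $k=1$, but this is not needed for the dichotomy). Branch (a) via Theorem~\ref{Z} is fine, modulo the small organizational slip that $(2,6)$ belongs to your branch (a), not branch (b), since $\bpd 26=\emptyset$.

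The genuine gap is in branch (b): the completeness of the list is the entire mathematical content of the proposition, and you never actually establish it. You restrict the finite check to $e+1\in\{5,7,11,13,19\}$, but nothing in your write-up shows that $e$ must lie in $\{4,6,10,12,18\}$ or that $q$ is bounded; you only say you ``would first prove a clean inequality forcing $e+1$ to be prime and $q$ bounded.'' That inequality is precisely what is missing. The paper avoids proving it by first showing $\lbpd qe=\emptyset$ iff $\bpd qe=\emptyset$ or $\lpd qe=\emptyset$, and then quoting \cite[Theorem~2.2]{NP1} for the classification of the pairs with $\lpd qe=\emptyset$ (together with Theorem~\ref{Z} for $\bpd qe=\emptyset$). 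If you want a self-contained argument along the lines you sketch, the missing step is: every prime divisor of $\Phi_e(q)$ that is not a primitive prime divisor of $q^e-1$ equals the largest prime divisor of $e$ and divides $\Phi_e(q)$ exactly once (for $e\geq 3$); hence if all primitive prime divisors equal $e+1$ and $(e+1)^2\nmid q^e-1$, then $\Phi_e(q)\leq e(e+1)$, while $\Phi_e(q)\geq (q-1)^{\varphi(e)}$, which bounds $(q,e)$ to an explicit finite set that can then be enumerated. Without that inequality (or the citation), the proof is a plan rather than a proof.
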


\begin{proof} 
Let $\mathrm{Lpd}, \mathrm{Lbpd}, \mathrm{bppd}, \mathrm{ppd}$ denote the sets 
$\lpd qe$, $\lbpd qe$, $\bpd qe$, $\pd qe$, respectively.
We prove first that $\mathrm{Lbpd}  \neq \emptyset$ if and only if both $\mathrm{bppd}  \neq\emptyset$ and 
$\mathrm{Lpd}  \neq \emptyset$.  If $\mathrm{Lbpd}\neq \emptyset$,
then clearly $\mathrm{bppd}\neq \emptyset $ and $\mathrm{Lpd}\neq \emptyset$. Conversely,
suppose that $\mathrm{bppd}\neq \emptyset$ and $\mathrm{Lpd}\neq \emptyset$. Let $r$ be the 
largest element of $\mathrm{bppd}$. Then $r\in \pd p {ke}$, so $r\geq ke+1\geq e+1$. If $r>e+1$, then
$r\in \mathrm{Lbpd}$, so we may assume that $r=e+1$. Then $k=1$ and $\mathrm{bppd}=\mathrm{ppd}=\{e+1\}$.
Let $s\in \mathrm{Lpd}$. Since $\mathrm{ppd}=\{e+1\}$, it follows that $s=(e+1)^2$ and $s$ divides $q^e-1$, 
whence $s\in \mathrm{Lbpd}$. Thus in both cases $\mathrm{Lbpd}$ is nonempty, as required.
\par
Hence $\mathrm{Lbpd}$ is empty if and only if either $\mathrm{Lpd}$ is empty or 
$\mathrm{bppd}$ is empty. Now assume that $q\geq 2 $ and $e\geq 3$.
By \cite[Theorem 2.2]{NP1}, the set $\mathrm{Lpd}$
is empty only for $q=2$ and $e\in \{4,6,10,12,18\}$, for $q=3$
and $e\in \{4,6\}$, and for $(q,e)=(5,6)$. Moreover, by Theorem~\ref{Z},
the set $\mathrm{bppd}$ is empty only if 
$(q,e)=\{(2,6),(4,3)\}$. So $\mathrm{Lbpd}$ is empty precisely for the values
of $(q,e)$ listed in the proposition.
\end{proof}

\begin{remark}
\label{Rem 2.3}
\rm {Let $G$ be a subgroup of $\GL(d,q)$ and let $m\in\lpd qe$ 
(or $m\in\lbpd qe$) with $d\geq e > d/2$. If $m$ divides
$|G|$, then $m\in oe(G)$. In fact, either $m=r$ or $m=r^2=(e+1)^2$,
where $r$ is a (basic) primitive prime divisor of $q^e-1$. Since 
$e>d/2$, a Sylow $r$-subgroup of $\GL(d,q)$ is
cyclic and so $G$ has elements of order $m$.}
\end{remark}

We say that an element $g\in G$ is a \emph{$\bpd q e$-element} if the order of $g$
is divisible by some element of $\bpd qe$. Similarly,
we say that $g\in G$ is an \emph{$\lbpd q e$-element} if the order of $g$ is divisible
by some element of $\lbpd qe$.
\par
We use results from ~\cite{NP1} and \cite{NP2} to deal with subgroups
of linear groups containing one or two ``big" ppd-elements. We observe here that
{\it basic} primitive prime divisors will be relevant in order to exclude examples of
``subfield type" (see case (d) in the proof of Lemma~\ref{lem2.4}), while {\it large} 
primitive divisors will
be relevant in the proof of Lemma~\ref{lem3.1}. Our first lemma deals with irreducible subgroups of $\GL(d,q)$ for $d\geq 3$.

\begin{lemma}\label{lem2.4}
Let $G$ be an irreducible subgroup of $\GL(d,q)$, with $d\geq 3$. Assume that 
$x,y\in G$ are such that $x$ is a $\bpd q e$-element and $y$ is a $\bpd q f$-element,
with $d\geq e>f>d/2$.  
Then either $G$ is nonsolvable or one of the following holds.
\begin{description}
\item[(1)] $d=e=f+1$ is prime, $G$ is conjugate to a subgroup of $\GL(1,q^d).d$, and $G$ contains no $\lbpd q f$-elements;
\item[(2)] There exists a prime $c$ dividing $\gcd (d,e,f)$ such that $G$ is conjugate to a subgroup of $\GL(\frac{d}{c},q^c).c$
and  the elements $x^c$ and $y^c$ lie in $\GL(\frac{d}{c},q^c)$ as a $\bpd {q^c}{\frac{e}{c}}$-element   and  
a $\bpd {q^c}{\frac{f}{c}}$-element, respectively.
\end{description}
\end{lemma}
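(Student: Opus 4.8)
The plan is to apply the Aschbacher classification of subgroups of $\GL(d,q)$, exploiting the presence of the two $\ppd$-elements $x$ and $y$ of orders divisible by primitive prime divisors attached to the large exponents $e$ and $f$. The results of Niemeyer and Praeger in~\cite{NP1,NP2} classify irreducible subgroups $G \leq \GL(d,q)$ that contain a $\bpd q e$-element when $e > d/2$; such a $G$ lies in one of a short list of Aschbacher classes, because a primitive prime divisor $r$ of $q^e-1$ with $e > d/2$ forces strong restrictions: a cyclic Sylow $r$-subgroup acting with a large irreducible constituent. First I would invoke this classification for the larger exponent $e$, obtaining that $G$ is either in class $\mathcal{C}_2$ (imprimitive), $\mathcal{C}_3$ (field-extension, preserving an $\F_{q^c}$-structure), $\mathcal{C}_5$ (subfield), $\mathcal{C}_8$ (a classical group preserving a form), a nearly simple group in class $\mathcal{S}$, or a full classical/linear group. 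The goal in each case is to show that either $G$ is nonsolvable or we land in conclusion~(1) or~(2).

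The key organizing idea is that a $\bpd q e$-element, by definition, has order divisible by a \emph{basic} primitive prime divisor, i.e.\ a primitive prime divisor of $p^{ke}-1$. This is precisely what rules out the subfield case: a subgroup conjugate into $\GL(d,q_0)$ with $q_0 = p^{k_0}$, $k_0 \mid k$ and $k_0 < k$, cannot contain an element whose order is divisible by a primitive prime divisor of $p^{ke}-1$, since such a prime does not divide $|\GL(d,q_0)|$. So the basic hypothesis eliminates class $\mathcal{C}_5$ outright. For the remaining ``geometric'' reductions I would argue as follows. If $G$ is imprimitive (class $\mathcal{C}_2$), permuting $t \geq 2$ blocks, then the primitivity constraint from $e > d/2$ together with $f > d/2$ forces the block structure to be incompatible with carrying both a $\bpd q e$- and a $\bpd q f$-element of such large order, pushing us toward either nonsolvability or the field-extension case. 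For the field-extension case (class $\mathcal{C}_3$), $G$ embeds in $\GL(d/c, q^c).c$ for a prime $c \mid d$; here I would track the orders of $x^c$ and $y^c$ inside $\GL(d/c,q^c)$ and verify, using that $r \mid q^e - 1$ restricts to a primitive prime divisor of $(q^c)^{e/c}-1$, that $c$ must divide $\gcd(d,e,f)$ and that $x^c, y^c$ become $\bpd {q^c}{e/c}$- and $\bpd {q^c}{f/c}$-elements, which is exactly conclusion~(2).

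The genuinely nonsolvable cases then have to be cleared away. If $G$ lies in class $\mathcal{S}$ (almost simple modulo scalars) or is a full classical group in class $\mathcal{C}_8$ or contains $\SL(d,q)$, then $G$ is nonsolvable and we are done immediately; the substantive work is checking that a solvable $G$ carrying both large $\ppd$-elements cannot be nearly simple of small-dimensional exceptional type. Here I would lean on the explicit ``small'' exceptions recorded in~\cite{NP1,NP2}, which enumerate the nongeneric examples of $\ppd$-containing subgroups, and verify by inspection that none of them is simultaneously solvable and equipped with two $\ppd$-elements for distinct exponents $e > f > d/2$. The residual case is the one-dimensional Singer-type subgroup: when $G$ normalizes a cyclic torus $\GL(1,q^d).d$, which is solvable, and this is exactly conclusion~(1); I would check that here $e = f+1 = d$ with $d$ prime is forced (a single Singer cycle can only realize the top exponent $e = d$, and $f$ must then be $d-1$ coming from a different torus inside the normalizer), and confirm the absence of $\lbpd q f$-elements by noting that the relevant primitive prime divisor of $q^{f} - 1$ would have to be $f+1 = d$ itself with multiplicity too small to satisfy the ``large'' condition. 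The main obstacle I anticipate is the careful bookkeeping in the $\mathcal{C}_3$ reduction: one must correctly descend both $\ppd$-exponents simultaneously under the field extension and confirm that the \emph{basic} primitive prime divisor property is preserved, since $\bpd {q^c}{e/c}$ refers back to primitive prime divisors of $p^{(kc)(e/c)} - 1 = p^{ke}-1$, so the compatibility is exact but needs to be stated precisely rather than assumed.
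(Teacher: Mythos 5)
Your route is the paper's route: both proofs push $G$ through \cite[Theorem 4.7]{NP1} (the Aschbacher-style classification of irreducible subgroups of $\GL(d,q)$ containing a $\ppd(q,e)$-element with $e>d/2$), use the \emph{basic} hypothesis to eliminate the subfield class, and extract conclusions (1) and (2) from the Singer and extension-field cases by essentially the descent bookkeeping you describe, including the reason conclusion (1) carries the ``no $\lbpd qf$-elements'' clause (there $s=f+1=d$ and $s^2\nmid |G|$ because $s\nmid q^d-1$).

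The genuine gap is in your sentence claiming that if $G$ is a full classical group then it ``is nonsolvable and we are done immediately,'' together with your misplacement of the residual danger in the nearly simple class. Nearly simple groups are nonsolvable by definition, so there is nothing to inspect there; what can actually be solvable in the classical case of the classification are the classical groups themselves in tiny dimension, e.g.\ $\Omega^{\circ}(3,3)\cong A_4$, $\SU(3,2)$ (relevant inside $\GL(3,4)$), and $\Omega^{+}(4,2)\cong S_3\times S_3$, $\Omega^{+}(4,3)$. The lemma's hypotheses do not exclude $(d,q)\in\{(3,2),(3,3),(3,4),(4,2),(4,3)\}$, so these parameters must be removed before the classification is applied; the paper does this at the outset (Theorem~\ref{Z} shows $q=3,4$ are impossible when $d=3$, and direct ATLAS arguments settle $\GL(3,2)$, $\GL(4,2)$ and $\GL(4,3)$ --- note that for $(d,q)=(3,2)$ conclusion (1) genuinely occurs, with $G\cong\GL(1,8).3$ of order $21$). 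A second, smaller, soft spot: your imprimitive case is only an assertion. The paper's argument uses the precise form of case (e) of \cite[Theorem 4.7]{NP1} (one-dimensional blocks with $A_d$ or $S_d$ induced), so that $d\geq 5$ gives nonsolvability at once, while for $d\leq 4$ the primes $r,s$ would have to divide $d!$ although $r\geq e+1\geq 4$, a contradiction. With these two repairs your outline coincides with the paper's proof.
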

\begin{proof}
By assumption $|x|$ is a multiple of an element $r\in \bpd q e$ and  $|y|$ is a multiple of an element $s\in \bpd q f$. 
Suppose first that $d=3$ and $q\leq4$. Then $3\geq e>f>3/2$ and so $e=3, f=2$ and $\bpd q3\ne\emptyset, \bpd q2\ne\emptyset$. 
By Theorem~\ref{Z}, $q\ne 3,4$. Hence, $q=2$ and so $r=7, s=3$, $G\leq \GL(3,2)$ and $\lbpd q f = \emptyset$. 
By \cite[p.3]{ATLAS}, 
either $G=\GL(3,2)$ is nonsolvable or $G=\gen {x, y}\cong \GL(1,8).3$ and part (1) holds. 
Thus if $d=3$ we may assume that $q\geq 5$.
\par
Next we claim that, if $(d,q)=(4,2)$ or $(4,3)$, then $G$ is nonsolvable.
In these cases  $d=4\geq e>f>2$ and so $e=4, f=3$ and $\bpd q4\ne\emptyset$, $\bpd q3\ne\emptyset$. 
If $(d,q)=(4,3)$, then $G\leq \GL(4,3)\cong 2.\PSL(4,3).2$ (see \cite[pp. 68 and 69]{ATLAS})
and $r=5$, $s=13$. 
Since no proper subgroup of $\PSL(4,3)$ is divisible by $5\cdot 13$, $\PSL(4,3)$ is a section of $G$ and hence $G$ is nonsolvable,
as claimed. If $(d,q)=(4,2)$, then $G\leq \GL(4,2)\cong \PSL(4,2)$ and 
$r=5, s=7$. It follows from  \cite[pp. 10 and 22]{ATLAS} that $G\cong A_7$ or $A_8$ and in particular $G$ is
nonsolvable. Thus we may assume further that $(d,q)\ne(4,2)$ or $(4,3)$.

Note that $q=p^k$ for some prime $p$ and $k\geq1$. Since $r\in\pd p{ek}$, $s\in\pd p{fk}$ 
and $e>f\geq2$, neither $r$ nor $s$ divides $q-1$. Moreover, 
if $i\leq d$ and $h\leq \frac{k}{2}$, then neither $r$ nor $s$ divides $p^{ih}-1$ (since $ih\leq \frac{dk}{2} < fk<ek$). 

We apply \cite[Theorem 4.7]{NP1}.
First we show that the cases (a), (c), (d)
and (e) of \cite[Theorem 4.7]{NP1}, if they occur, imply that $G$ is nonsolvable.
\par
(a): (Classical type) $G$ has a normal subgroup $\Omega$, where $\Omega $ is one of 
the following groups: $\SL(d,q),\ \Sp(d,q)\ (d\ \text{even}),\ \SU(d,q_0)\ (q=q_0^2),\ \Omega ^\pm(d,q)\ 
(d\ \text{even}),\ \text{and}\ \Omega ^\circ(d,q)\ (d\ \text{odd})\ .$
Because of our assumption that $(d,q)\ne (4,2), (4,3)$ or $(3,q)$ with $q\leq 4$, 
each of these classical groups is nonsolvable and so $G$ is nonsolvable in case (a).
\par
(c): (Nearly Simple Groups) Here $G$ is nearly simple, and hence nonsolvable.
\par
(d): (Subfield type) Denote by $Z$ the subgroup of scalar matrices of $\GL(d,q)$
and let $Z\circ \GL(d,q_0)$ denote a group which can be defined over a subfield
modulo scalars. In case (d), $G$ is conjugate to a subgroup of $Z\circ \GL(d,q_0)$ for some 
\emph{proper} subfield $\mathbb F_{q_0}$ of $\mathbb F_q$, say $q=q_0^a $ and $q_0=p^h$, with $k=ah$ and $a\geq 2$. 
Since $x\in G$ and $r$ does not divide $q-1$ (as noted above),
it follows that $r$ divides $|\GL(d,q_0)|$. Hence $r$ divides $p^{ht}-1$ for some $t\leq d$. Since $h\leq \frac{k}{2}$, 
this contradicts our observation above, so case (d) does not arise.
\par
(e): (Imprimitive type) Here $G$ preserves a direct-sum decomposition $V=U_1\oplus
\dots\oplus U_d$ with $\dim(U_i)=1$ for $i=1,\dots,d$ and $G$ induces $A_d$ or $S_d$
on the set $\{U_1,\dots,U_d\}$. Since $A_d$ is nonsolvable for $d\geq 5$, we may 
assume that $d\leq4$. In particular, $|G|$ divides $(q-1)^dd!$ and since 
the primes $r, s$ do not divide $q-1$, each of them divides $d!$.
In particular, $r,s\leq d\leq 4$. However, $e>f\geq 2$, so
$r\geq e+1\geq 4$, which implies that $r=4$, a contradiction.
\par
This leaves case (b) of \cite[Theorem 4.7]{NP1}, and in that case $G$ is conjugate to a subgroup of 
$\GL(\frac{d}{c},q^c).c$, for some prime $c$ dividing $d$. Moreover either $c=d=f+1=e$, or $c<d$ and 
$c$ divides $\gcd (d,e,f)$. In the former case, $G\leq \GL(1,q^d).d$ and $s=d=f+1$. As $s^2$ does not divide $|G|$, 
part (1) holds. 
In the latter case, since each of $r, s$ is at least $f+1> c$, it follows that $x^c, y^c\in\GL(\frac{d}{c},q^c)$ 
and have orders divisible by $r, s$, respectively. Thus part (2) holds.
\end{proof}

If $V$ is a $G$-module and $U$ is a $G$-invariant section of $V$ (that is to say, $U=V_1/V_2$
where $V_2\leq V_1$ are $G$-submodules of $V$), we denote by $G^U$ the group of 
automorphisms induced by $G$ on $U$. So $G^U\cong G/C_G(U)$, a factor group of
$G$, and accordingly we denote by $x^U$ the image in $G^U$ of an element $x\in G$
under the relevant projection homomorphism.

\medskip
For a prime $r$ and integer $n$, let $n_r$ denote the \emph{$r$-part} of $n$, that is, the largest power of $r$ dividing $n$.

\begin{remark} 
\label{Rem 2.5}
\rm {Let $G\leq \GL(d,q)$ and let $V=V(d,q)$ be the natural
module for $G$. Suppose that $x$ is a $\ppd(q,e)$-element of $G$ of order divisible by a primitive prime divisor 
$r$ of $q^e-1$ and let $x_0$ 
be an element of order $r$ in $\langle x\rangle$.  Then the following statements hold.
\par 
(1) Under the action of $\langle x_0\rangle$, $V|_{\langle x_0\rangle}$ is a completely reducible 
$\mathbb F_q\langle x_0\rangle$-module, by Maschke's Theorem, and all the nontrivial 
irreducible submodules of $V|_{\langle x_0\rangle}$ have dimension $e$ 
(see for instance \cite[Theorem II.3.10]{H}). In particular, it follows that there 
exists at least one $G$-composition factor $U$ of $V$ on which $x_0$,
and hence also $x$,  acts nontrivially.
So $|x_0^U|=r$ and hence $x^U$ is a $\ppd(q,e)$-element of
$G^U\leq \GL(d_0,q)$ of order divisible by $|x_0^U|=r$ and  satisfying $|x^U|_r=|x|_r$, 
where $d_0=\dim_{\mathbb F_q}(U)\geq e$.
\par
(2) Suppose also that $y$ is a $\ppd(q,f)$-element of $G$ of order divisible by a primitive prime divisor $s$ of 
$q^f-1$, and that $d\geq e>f>d/2$.
Since both $e$ and $f$ are greater than $d/2$, it follows by part (1) of this remark that
there exists a unique $G$-composition factor $U$ of $V$,
of dimension $d_0=\dim_{\mathbb F_q}(U)\geq e$, on which
both $x$ and $y$ act nontrivially. Moreover $G^U\leq \GL(d_0,q)$, and 
$x^U$ is a $\ppd(q,e)$-element of $G^U$ of order satisfying $|x^U|_r=|x|_r\geq r$
and $y^U$ is a $\ppd(q,f)$-element of $G^U$ of order satisfying $|y^U|_s=|y|_s\geq s$.}
\end{remark}
\medskip
We now apply Lemma~\ref{lem2.4} to linear groups of dimension large enough to allow the
existence of primitive prime divisors for two distinct exponents, both
greater than half the dimension of the linear group. It is convenient 
to deal separately  with certain large exponent pairs.

\begin{lemma}\label{lem2.6}
Let $j$ be a nonnegative integer, $\delta\in\{1,2\}$, and let $x,y\in \GL(d,q)$ be such that
$x$ is a $\bbppd(q,e)$-element and $y$ is a $\bbppd(q,f)$-element.
Assume that $e$, $f$, $\delta$, and $d$ satisfy  the following conditions:
\[
e=d-j,\ f=d-j-\delta,\ d\geq 2j+2\delta +1. 
\]
If $\delta=1$, we assume in addition that $y$ is an $\mathrm{Lbpd}(q,f)$-element.
Then $G=\langle x,y\rangle$ is nonsolvable.
\end{lemma}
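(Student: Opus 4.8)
The plan is to combine Remark~\ref{Rem 2.5} and Lemma~\ref{lem2.4}, which are tailored exactly to a pair of primitive-prime-divisor exponents both exceeding half the dimension. First I would record that the hypothesis $d\geq 2j+2\delta+1$ is equivalent to $f=d-j-\delta>d/2$; together with $e=d-j\leq d$ and $e>f$ this gives $d\geq e>f>d/2$, the configuration demanded by both results. I would then reduce to the irreducible case, dispose of the two exceptional outcomes of Lemma~\ref{lem2.4}, and handle the values $\delta=1$ and $\delta=2$ separately, the second reducing to the first.

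Next I would pass to a composition factor. By Remark~\ref{Rem 2.5}(2) there is a unique $G$-composition factor $U$ of the natural module, of dimension $d_0\geq e$, on which $G^U=\gen{x^U,y^U}$ acts irreducibly, with $x^U$ a $\bpd q e$-element and $y^U$ a $\bpd q f$-element; moreover the relevant prime-power parts of the orders are preserved, so if $\delta=1$ and $y$ is an $\lbpd q f$-element then so is $y^U$. Since $G^U$ is a quotient of $G$, it suffices to show $G^U$ is nonsolvable, and so I may replace $(G,x,y)$ by $(G^U,x^U,y^U)$ and assume from the outset that $G$ is irreducible of dimension $d_0\geq e$, while retaining $e=d-j$, $f=d-j-\delta$ with the original $d\geq d_0$. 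As $d_0\geq e>f>d/2\geq d_0/2$ and $d_0\geq e\geq 3$, Lemma~\ref{lem2.4} applies, and it remains only to exclude possibilities (1) and (2); in every other case $G$ is nonsolvable.

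Now I would split on $\delta$. If $\delta=1$, possibility (2) cannot occur, as it requires a prime $c\mid\gcd(d_0,e,f)$, forcing $c\mid e-f=1$; and possibility (1) cannot occur, since it requires $G$ to contain no $\lbpd q f$-element, whereas $y$ is one. Hence $G$ is nonsolvable, establishing the case $\delta=1$. If $\delta=2$, possibility (1) is excluded because it forces $e=f+1$ while $e-f=2$. In the surviving possibility (2) we have $c\mid e-f=2$, so $c=2$, both $e$ and $d_0$ are even, $G\leq\GL(d_0/2,q^2).2$, and $x^2,y^2\in\GL(d_0/2,q^2)$ are a $\bpd{q^2}{e/2}$-element and a $\bpd{q^2}{f/2}$-element respectively. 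I would then apply the conclusion already obtained for $\delta=1$ to $H=\gen{x^2,y^2}\leq\GL(d_0/2,q^2)$, taking $d'=d_0/2$, $e'=e/2$, $f'=f/2$, $\delta'=1$ and $j'=(d_0-e)/2$; this gives $H$ nonsolvable, whence $G\geq H$ is nonsolvable as well.

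To license this last step two hypotheses must be checked. The numerical condition $d'\geq 2j'+2\delta'+1$ unwinds to $2e-d_0\geq 6$; since $d_0\leq d$ and $e=d-j$ we get $2e-d_0\geq 2e-d=d-2j\geq 5$, and because $e$ and $d_0$ are both even, $2e-d_0$ is even and hence $\geq 6$. For the extra hypothesis that $y^2$ be an $\lbpd{q^2}{f/2}$-element, write $q=p^k$; a basic primitive prime divisor $s$ of $q^f-1$ lies in $\pd p{kf}=\bpd{q^2}{f/2}$ and satisfies $s\geq kf+1>f/2+1$, so $s\in\lbpd{q^2}{f/2}$, and since $s$ is odd and divides $|y|$ it divides $|y^2|$, making $y^2$ an $\lbpd{q^2}{f/2}$-element. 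The main obstacle is precisely this $\delta=2$ descent: one must verify that routing through possibility (2) of Lemma~\ref{lem2.4} lands in a legitimate $\delta=1$ instance. The two delicate points are the parity upgrade of $2e-d_0$ from $\geq 5$ to $\geq 6$ (which is exactly what the hypothesis $d\geq 2j+2\delta+1$ secures, together with $2\mid e$ and $2\mid d_0$), and the fact that $y^2$ inherits the large-basic-ppd property — this is what excludes possibility (1) at the second level and explains why the $\lbpd{}{}$-hypothesis, imposed only when $\delta=1$, suffices.
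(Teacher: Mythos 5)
Your proof is correct and follows essentially the same route as the paper's: reduce to the irreducible case via Remark~\ref{Rem 2.5}(2), apply Lemma~\ref{lem2.4}, use the $\mathrm{Lbpd}$ hypothesis to kill outcome (1) when $\delta=1$, and for $\delta=2$ descend through outcome (2) to a $\delta=1$ instance over $\GL(d_0/2,q^2)$. The only differences are cosmetic (you descend via $\langle x^2,y^2\rangle$ where the paper first reduces to odd-order generators to place $G$ itself inside $\GL(d/2,q^2)$), and you are in fact more explicit than the paper in verifying that $y^2$ inherits the large basic ppd property and that the parity of $2e-d_0$ upgrades the bound from $5$ to $6$.
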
 
\begin{proof}
Let $r\in\bpd q e$ such that $r$ divides $|x|$, and let $s\in\bpd qf$ such that $s$ divides $|y|$. 
As mentioned above, $r\equiv 1\pmod e$ and
$s\equiv 1\pmod f$. Moreover, $e>f=d-j-\delta\geq  
j+\delta +1\geq 2$, so in particular both $r$
and $s$ are odd primes. As it is sufficient to prove that some subgroup
of $G$ is nonsolvable, we may assume that $G=\langle x,y\rangle$, and 
$|x|,|y|$ are powers of $r$ and $s$, respectively. In particular, $|x|$ and $|y |$ are odd. 
If $\delta=1$, then in addition we assume that $y$ is an  
$\mathrm{Lbpd}(q,f)$-element.
\par
By our assumptions, $j\leq \frac {d-1}2-\delta$, so $d\geq e>f=d-j-\delta \geq \frac {d+1}2>\frac d2$.
In the following, we denote by $V$ the natural module $V(d,q)$ for $\GL(d,q)$.
\par
{\it Case (1): $G$ is irreducible on $V$.}
\par
The conditions of Lemma~\ref{lem2.4} are satisfied. Applying that result we deduce that
either $G$ is nonsolvable, or (1) $d=e=f+1$ is prime, $G$ is conjugate to a subgroup of $\GL(1,q^d).d$, 
and $G$ contains no $\mathrm{Lbpd}(q,f)$-elements, or (2) there is a prime $c<d$  such that $c$ 
divides $\gcd(d,e,f)$, $G$
is conjugate to a subgroup of $\GL(d/c,q^c).c$, and  $x^c$ and $y^c$ lie in $\GL(\frac{d}{c},q^c)$ 
as a $\bpd {q^c}{\frac{e}{c}}$-element   and  a $\bpd {q^c}{\frac{f}{c}}$-element, respectively.
If (1) holds, then $j=0, \delta=1$ and we have a contradiction, since $y$ is an
$\mathrm{Lbpd}(q,f)$-element of $G$. Thus  (2) holds. Since $c$ is a prime
dividing $\gcd(d,e,f)$, and since $\gcd(e,f)=\gcd(e,e-f)\leq e-f=\delta\leq 2$, it follows that 
$c=\delta=2$ and all of $d,e,f,j$ are even. Hence $G\leq \GL(d/2,q^2).2$, and since the orders of $x,y$ 
are odd, we conclude that $G\leq \GL(d/2,q^2)$. Since $d$ and $j$ are even, 
our assumption that $d\geq 2j+2\delta+1=2j+5$ implies $\frac d2\geq 2\frac j2+3$. Thus replacing $(d,q,e,f,j,\delta)$
by $(\frac d2,q^2,\frac e2,\frac f2,\frac j2,\frac \delta 2)$, all the conditions of the lemma hold, 
and $G$ is irreducible on $V(\frac d 2, q^2)$.
By the arguments above and since $\frac \delta 2 =1$, we conclude that $G$ must be
nonsolvable. 
\par
{\it Case (2): $G$ is reducible on $V$.}
\par
If $e=d$, then $|x|$ would be a multiple of a primitive prime divisor 
of $q^d-1$ and so $\langle x\rangle$ would act irreducibly on the 
natural module $V$. However $G$ is reducible, so we must have $e<d$ 
and  $j\geq 1$. By Remark \ref{Rem 2.5}(2), there exists a unique 
$G$-composition factor $U$ of $V$ of dimension $d_0=\dim_{\mathbb F_q}(U)$,
say, where $d>d_0\geq e=d-j$, such that $x^U$ and $y^U$ are $\bbppd(q,e)$-\ 
and $\bbppd(q,f)$-elements of $G^U$, respectively, with $|x^U|_r=|x|_r\geq r$, 
$|y^U|_s=|y|_s\geq s$.  In particular, if $y$ is an 
$\mathrm{Lbpd}(q,f)$-element of $G$, then $y^U$ is an 
$\mathrm{Lbpd}(q,f)$-element of $G^U$. 

We claim that the irreducible group $G^U=\langle x^U, 
y^U\rangle\leq\GL(d_0,q)$ 
induced by $G$ on $U$ satisfies the conditions of Lemma~\ref{lem2.6} 
with parameters $d_0,e,f,\delta$, relative to the integer $j_0:=j-d+d_0$. 
Note that $j>j_0=d_0-e\geq0$ and $d_0=j_0+d-j\geq j_0+j+2\delta+1 
>2j_0+2\delta+1\geq 3$, and the conditions $e=d_0-j_0, f=d_0-j_0-\delta$ hold, by 
the definition of $j_0$. This proves the claim. 
Since $G^U $ is irreducible, it follows from Case (1) 
of this proof that $G^U$ is nonsolvable. Consequently, also 
$G$ is nonsolvable.
\end{proof} 

We can now prove Theorem~B for classical simple groups of large dimension.

\begin{proposition}\label{prop2.7}
Assume that $S$ is one of the following simple groups:
\begin{description}
\item[(1)] $\PSL(d,q)$, with $d\geq 4$ and 
$(d,q)\neq (6,2)$;
\item[(2)] $\PSp(d,q)$ ($d$ even) or $\PSU(d,q)$ ($d$ odd), with $d\geq 5$ and 
\newline $(d,q)\notin \{(5,2),(6,2),(8,2)\}$;
\item[(3)] $\POm^{\circ}(d,q)$ ($dq$ odd) or $\PSU(d,q)$ ($d$ even), with $d\geq 7$;
\item[(4)] $\POm^{\pm}(d,q)$ ($d$ even), with $d\geq 10$ and $(d,q)\neq (10,2)$.
\end{description}
Then there exist $a,b\in oe(S)$ such that for every choice of $x,y\in S$ with
$|x|=a$ and $|y|=b$, the group $\langle x,y\rangle$ is nonsolvable.
\end{proposition}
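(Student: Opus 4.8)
The plan is to derive every case from Lemma~\ref{lem2.6}, applied to a matrix preimage of $S$, using the elementary fact that nonsolvability passes through a central quotient. Write $S=\tilde S/Z$, where $\tilde S$ is the corresponding quasisimple classical matrix group ($\SL$, $\Sp$, $\SU$ or $\Omega$) and $Z=\tilde S\cap\zent{\GL(D,Q)}$ is the (solvable) group of scalars in $\tilde S$. Here the natural module has parameters $(D,Q)=(d,q)$ in the linear, symplectic and orthogonal cases, while in the unitary case $\SU(d,q)\le\GL(d,q^2)$, so $(D,Q)=(d,q^2)$ and all primitive prime divisors are taken relative to the base $q^2$. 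Given $x,y\in S$ of the prescribed orders $a,b$, I lift them to $\tilde x,\tilde y\in\tilde S$ of the same orders; since $\gen{x,y}$ is the image of $G:=\gen{\tilde x,\tilde y}$ and $Z$ is abelian, it suffices to prove that $G$ is nonsolvable, and this is exactly what Lemma~\ref{lem2.6} delivers once its hypotheses are set up.

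For each family I select a pair of exponents $e>f>D/2$ of the shape $e=d-j$, $f=d-j-\delta$ demanded by Lemma~\ref{lem2.6}: in case (1) I take $(e,f)=(d,d-1)$, so $(j,\delta)=(0,1)$; in case (2), $(e,f)=(d,d-2)$ with $(j,\delta)=(0,2)$ (both for $\Sp$ over $q$ and for the $d$-odd unitary groups over $q^2$); in case (3), $(e,f)=(d-1,d-3)$ with $(j,\delta)=(1,2)$; and in case (4), $(e,f)=(d-2,d-4)$ with $(j,\delta)=(2,2)$. In each instance the hypothesis $d\ge 2j+2\delta+1$ of Lemma~\ref{lem2.6} reduces to exactly the stated dimension bound ($d\ge 4,5,7,10$ respectively), and one checks directly that $e,f>D/2$. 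The availability of $\bpd Q e$- and $\bpd Q f$-elements for these specific exponents inside $\tilde S$ reflects the standard classical subsystem subgroups: a Singer-type torus for $e=d$ in the linear and symplectic cases, and subsystems such as $\Sp(d-2,q)\le\Sp(d,q)$, $\mathrm{GU}(d-2,q)\le\mathrm{GU}(d,q)$, and the orthogonal analogues, which carry the elements of the smaller exponent.

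It then remains to choose $a,b$ and confirm $a,b\in oe(S)$. I take $a=r$ to be a single basic primitive prime divisor $r\in\bpd Q e$, and $b=s$ where $s$ is a single element of $\bpd Q f$ when $\delta=2$, or of $\lbpd Q f$ when $\delta=1$. With these choices every $x\in S$ of order $a$ lifts to a $\bpd Q e$-element, and every $y$ of order $b$ lifts to a $\bpd Q f$-element (an $\lbpd Q f$-element when $\delta=1$), so $G=\gen{\tilde x,\tilde y}$ meets the hypotheses of Lemma~\ref{lem2.6} verbatim. That $a,b$ are genuine element orders of $S$ follows from Remark~\ref{Rem 2.3}: because $e,f>D/2$ the corresponding Sylow subgroups of $\GL(D,Q)$ are cyclic, so the primes $r,s$ (or the prime square $(f+1)^2$ permitted in $\lbpd Q f$) dividing $|S|$ are realised as element orders, and they survive the passage to $S$ since they are coprime to $|Z|$. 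All the structural content — the irreducible/reducible dichotomy, the reduction to a single faithful composition factor, and the exclusion of imprimitive, subfield and nearly simple configurations — is already packaged inside Lemma~\ref{lem2.6} (through Lemma~\ref{lem2.4} and Remark~\ref{Rem 2.5}), so no further geometric analysis is needed here.

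The one genuinely delicate point, and the main obstacle, is the nonemptiness of the chosen divisor sets for small $(d,q)$, which is precisely what the listed exceptions encode. When $\delta=2$ no large divisor is needed and I require only $\bpd Q e,\bpd Q f\neq\emptyset$; by Theorem~\ref{Z} these fail only through $\bpd 2 6=\emptyset$, forcing the exclusion $(6,2)$ in cases (1) and (2), $(8,2)$ in case (2), $(10,2)$ in case (4), and — via the base $q^2$, where $\bpd 4 3=\pd 2 6=\emptyset$ — the exclusion $(5,2)$ for the $d$-odd unitary groups. When $\delta=1$ (the linear case) I need in addition $\lbpd q{d-1}\neq\emptyset$, and Proposition~\ref{Lbpd} lists the finitely many $(q,d-1)$ for which it is empty; for those I fall back to the $\delta=2$ pair $(e,f)=(d,d-2)$, legitimate once $d\ge5$. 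This leaves only a short finite list of genuinely small survivors — for instance $\PSL(4,4)$, where $d=4$ blocks the fallback and $\bpd 4 3=\pd 2 6=\emptyset$ leaves no basic divisor for $f$ — that must be settled directly from their maximal-subgroup structure in the ATLAS, in the spirit of Propositions~\ref{prop3.4} and~\ref{prop3.5}. Once this finite verification is carried out, the general argument above yields the proposition.
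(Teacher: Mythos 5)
Your proposal follows the paper's proof essentially verbatim: the same reduction to the quasisimple matrix group $\hat S$ with $a,b$ coprime to the centre, the same exponent pairs $(e,f)=(d,d-1),(d,d-2),(d-1,d-3),(d-2,d-4)$ fed into Lemma~\ref{lem2.6} with the same $(j,\delta)$, the same use of Remark~\ref{Rem 2.3} to get $a,b\in oe(S)$, and the same fallback to a $\delta=2$ pair $(d,d-2)$ for the finitely many $(d,q)$ in case (1) where $\bpd qd$ or $\lbpd q{d-1}$ is empty. The only divergence is $\PSL(4,4)$ — correctly identified as the unique survivor of your fallback — which you defer to a direct check, whereas the paper puts it in its table with $(a,b)=(q_4,q_2)$ and invokes Lemma~\ref{lem2.6} with $\delta=2$, $j=0$; since there $f=2\not>d/2$ and $d=4<2j+2\delta+1$, that application is itself strained, so your caution on this one group is warranted rather than a defect.
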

\begin{proof}
We work with the group $\hat S$ defined as follows:
\newline $\bullet$ $\hat S=\SL(d,q)\leq \GL(d,q)$, when $S=\PSL(d,q)$;
\newline $\bullet$ $\hat S=\Sp(d,q)\leq \GL(d,q)$, when $S=\PSp(d,q)$;
\newline $\bullet$ $\hat S=\SU(d,q)\leq \GL(d,q^2)$, when $S=\PSU(d,q)$;
\newline $\bullet$ $\hat S=\Omega^\varepsilon(d,q)\leq \GL(d,q)$, 
when $S=\POm^\varepsilon(d,q)$, for $\varepsilon=\pm$ or $\circ$.
\newline Let $\mathbb Z=\mathbb Z(\hat S)$. 
We prove that there exist $a,b\in oe(\hat S)$ with 
$\gcd(ab,|\mathbb Z|)\newline
=1$ such that, for every $\hat x\in \hat S$ of order
$a$ and every $\hat y\in \hat S$ of order
$b$, the group $\langle \hat x,\hat y\rangle$ is nonsolvable. Since 
$a$ and $b$ are coprime to $|\mathbb Z|$, if $|\hat x|=a$ then also 
the order of $\hat x\mathbb Z$ in $S$ is equal to $a$, and similarly, if
$|\hat y|=b$ then $|\hat y\mathbb Z|=b$. 
In addition, if $\langle\hat x, \hat y\rangle$ is nonsolvable then also  
$\langle x,y\rangle$ is nonsolvable,  because it is a central 
factor of the nonsolvable group $\langle \hat x,\hat y\rangle$. 
Thus it is sufficient to work
as described with the group $\hat S$. 
In fact, each of  $a$ and $b$ will be either a prime or the square of 
a prime, and it will be easy to check that $\gcd(ab,|\mathbb Z|)=1$.
\medskip
\par
{\bf(1):} Let $\hat S=\SL(d,q)$, with $d\geq 4$. Suppose first  that
\begin{multline}\label{eq2}
$$
(d,q)\notin \{(4,4),(5,2),(5,3),(6,2),(7,2),(7,3),(7,5),\\(11,2),(13,2),(19,2)\}.\tag{1.1}
$$
\end{multline}
Then by Theorem~\ref{Z} there exists $a\in \bpd qd$, and by 
Proposition~\ref{Lbpd} there exists $b\in \lbpd q{d-1}$. 
Now $q^d-1$ and $q^{d-1}-1$ divide $\hat S$
and hence $a,b\in oe(\hat S)$ by Remark~\ref{Rem 2.3}. 
Also $\gcd(ab,|\mathbb Z|)=1$. Let now $\hat x,\hat y \in \hat S$ 
be such that $|\hat x|=a$ and $|\hat y|=b$. Then $\hat x$ is a 
$\bbppd(q,d)$-element of $\hat S$ and $\hat y$ is an 
$\Lbpd(q,d-1)$-element of $\hat S$. Since $d\geq 4$, 
the conditions of  Lemma~\ref{lem2.6} 
are satisfied with $\delta=1, j=0$, and we conclude that 
$\langle \hat x,\hat y\rangle$ is nonsolvable.
\par
We now consider the ``isolated" cases left out of this proof and 
listed in (\ref{eq2}).
We deal with all these cases, except the excluded pair 
$(d,q)=(6,2)$, by taking elements $\hat x,\hat y\in \hat S$
with $|\hat x|=a$ and $|\hat y|=b$, 
where $a,b$ are  as in the following table. (Here $q_i$ denotes a prime in 
$\bpd q i$, and, by Theorem~\ref{Z},  for all entries
in the table such a prime $q_i$ exists.)

\begin{small}
$$\begin{array}{|c|ccccccccc|}\hline
(d,q)&(4,4)&(5,2)&(5,3)&(7,2)&(7,3)&(7,5)&(11,2)&(13,2)&(19,2)\\ \hline
a&q_4&q_5&q_5&q_7&q_7&q_7&q_{11}&q_{13}&q_{19}\\
b&q_2&q_3&q_3&q_5&q_5&q_5&q_9&q_{11}&q_{17}\\ \hline
\end{array}
$$
\end{small}

\medskip
Thus $\hat x,\hat y$
satisfy all the conditions of Lemma~\ref{lem2.6} with $\delta=2, j=0$,
and hence $\langle \hat x,\hat y\rangle$ is nonsolvable.
\medskip
\par
{\bf(2):} If  $\hat S=\Sp(d,q)$ with $d\geq 5$ and $d$ even, then the order of 
$\hat S$ is divisible by both $q^d-1$ and $q^{d-2}-1$. By Theorem~\ref{Z}, 
$\bpd q d\neq \emptyset$ and $\bpd q {d-2}\neq \emptyset$, since 
by our assumptions $d>d-2\geq 4$ and $(q,d),(q,d-2)
\neq (2,6)$.  We take
$a\in \bpd q d$ and $b\in \bpd q {d-2}$ and note that $a,b\in oe(\hat S)$.

If $\hat S=\SU(d,q)\leq \GL(d,q^2)$ with $d\geq 5$ and $d$ odd, then the 
order of $\hat S$ is divisible by both $q^d+1$ and $q^{d-2}+1$. 
Here we take $a\in \bpd q {2d}= \bpd {q^2}d$ and $b\in \bpd q{2d-4}
=\bpd {q^2}{d-2}$. Note that both $\bpd q{2d}$
and $\bpd q{2d-4}$ are nonempty, since $(d,q)\neq (5,2)$.
Also $a,b\in oe(\hat S)$.
\par
In both cases we apply Lemma~\ref{lem2.6} with $\delta=2, j=0$. 
It follows that in each of these two cases, $\langle \hat x,\hat y\rangle$
is nonsolvable for every choice of $\hat x,\hat y\in \hat S$ with 
$|\hat x|=a$ and $|\hat y|=b$.
\medskip
\par 
{\bf(3):} If $\hat S=\Omega ^{\circ}(d,q)$, with $d\geq 7$ and $dq$ odd, 
then both $q^{d-1}-1$ and $q^{d-3}-1$ divide the order of $\hat S$ and
each has a basic primitive prime divisor by Theorem~\ref{Z}. 
We take $a\in \bpd q{d-1}$ and $b\in \bpd q{d-3}$ and note that 
$a,b\in oe(\hat S)$.
\par
If $\hat S=\SU(d,q)\leq \GL(d,q^2)$ with $d\geq 7$ and $d$ even, then the 
order of $\hat S$ is divisible by both $q^{d-1}+1$ and $q^{d-3}+1$. 
We take $a\in \bpd q{2d-2}=\bpd {q^2}{d-1}$
and $b\in \bpd q{2d-6}=\bpd {q^2}{d-3}$, noting that both $\bpd {q^2}{d-1}$ 
and $\bpd {q^2}{d-3}$ are nonempty, by Theorem~\ref{Z}, since 
$d-1>d-3\geq 4$ and $q^2>2$. Thus $a,b\in oe(\hat S)$.
\par
In both cases we apply Lemma~\ref{lem2.6} with $\delta=2, j=1$. 
It follows that in each of these two cases, $\langle \hat x,\hat y\rangle$
is nonsolvable for every choice of $\hat x,\hat y\in \hat S$ with 
$|\hat x|=a$ and $|\hat y|=b$.
\medskip
\par
{\bf(4):} Assume, finally, that $\hat S=\Omega ^{\pm}(d,q)$ with 
$d\geq 10$, $d$ even, and $(d,q)\ne (10,2)$.
In this case both $q^{d-2}-1$ and $q^{d-4}-1$ divide the order of $\hat S$,
and each has a basic primitive prime divisor by Theorem~\ref{Z}, since
$d-2>d-4\geq 6$ and $(d,q)\neq (10,2)$. 
We take $a\in \bpd q{d-2}$ and $b\in \bpd q{d-4}$, and note that $a,b\in 
oe(\hat S)$. 
By Lemma~\ref{lem2.6} with $\delta=2, j=2$, we conclude that 
$\langle \hat x,\hat y\rangle$ is nonsolvable for every choice of 
$\hat x,\hat y\in \hat S$ with $|\hat x|=a$ and $|\hat y|=b$.
\end{proof}  

\section{Proof of Theorem~B}

Proposition~\ref{prop2.7} does not cover the classical groups in small 
dimensions, when the conditions of Lemma~\ref{lem2.6} do not hold. 
For most of these cases we use the following result, 
which deals with the case where there 
is at least one $\Lbpd(q,e)$-element in $G\leq \GL(d,q)$, with $e>d/2$.

\begin{lemma}\label{lem3.1}
Let $G$ be an irreducible subgroup of $\GL(d,q)$, with $d\geq 3$. Assume that
$G$ contains an $\Lbpd(q,e)$-element for some integer $e$ with $d\geq e>d/2$.
Assume also that 
$$(d,q)\notin \{(3,2),(3,3),(3,4),(4,2),(4,3)\}.$$
Then either $G$ is
conjugate to a subgroup of $\GL(d/c,q^c).c$ for some prime divisor $c$ 
of $\gcd(d,e)$, or $G$ is nonsolvable.
\end{lemma}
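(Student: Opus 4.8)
The plan is to prove Lemma~\ref{lem3.1} by invoking the classification of irreducible subgroups of $\GL(d,q)$ that contain a single large primitive prime divisor element, specifically the main structural result of \cite{NP1} (the ``single ppd-element'' analogue of \cite[Theorem 4.7]{NP1} used in Lemma~\ref{lem2.4}). Since $G$ contains an $\Lbpd(q,e)$-element $g$ with $d\geq e>d/2$, the order of $g$ is divisible either by a primitive prime divisor $r>e+1$ of $q^e-1$, or by $r=e+1$ with $(e+1)^2\mid q^e-1$; in either case $g$ is a $\ppd(q,e)$-element of a rather special kind. Because $e>d/2$, the cyclic group $\gen{g}$ acts on the natural module $V=V(d,q)$ with a unique nontrivial irreducible constituent of dimension $e$, which constrains the possible structures severely. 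I would first record, exactly as in the opening paragraph of the proof of Lemma~\ref{lem2.4}, that $r$ does not divide $q-1$ and, more generally, does not divide $p^{ih}-1$ for $i\leq d$ and $h\leq k/2$ where $q=p^k$; this is what rules out the subfield and field-extension degeneracies that would otherwise intrude.

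The core of the argument is then a case analysis over the Aschbacher-type classes produced by the $\ppd$-classification, paralleling cases (a)--(e) in Lemma~\ref{lem2.4}. For the classical type (a), nearly simple type (c), and the relevant tensor/extraspecial configurations, I would argue that under the stated exclusions the group $G$ is forced to be nonsolvable, because each candidate classical group of dimension $d\geq 3$ is nonsolvable once the small exceptional pairs $(d,q)\in\{(3,2),(3,3),(3,4),(4,2),(4,3)\}$ are removed. The imprimitive case (e) is handled as before: $G$ would induce $A_d$ or $S_d$ on a direct-sum decomposition into $1$-spaces, so either $d\geq5$ and $G$ is nonsolvable, or $d\leq 4$ forces $r\mid d!$, contradicting $r\geq e+1>d/2$ together with the fact that $r\nmid q-1$. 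The one surviving alternative is the field-extension case (b), in which $G$ is conjugate to a subgroup of $\GL(d/c,q^c).c$ for a prime $c\mid d$; here the usual divisibility argument shows $c$ must also divide $e$, since an $\Lbpd(q,e)$-element descends to a primitive prime divisor element for the exponent $e/c$ over the larger field only when $c\mid e$. This yields exactly the dichotomy in the conclusion: either $G$ embeds in $\GL(d/c,q^c).c$ with $c\mid\gcd(d,e)$, or $G$ is nonsolvable.

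The main obstacle I anticipate is the precise bookkeeping in the field-extension case (b) and in the small-degree extraspecial/tensor-product configurations that the $\ppd$-classification permits. Unlike Lemma~\ref{lem2.4}, here there is only \emph{one} large ppd-element to exploit, so the leverage provided by a second exponent $f$ is absent; consequently some configurations that were immediately excluded before (for instance certain subgroups normalizing a symplectic-type $r$-group, or tensor decompositions $V=V_1\otimes V_2$) must now be ruled out directly, using the hypothesis $e>d/2$ to force the ppd-constituent to have dimension exceeding half of $d$ and hence to be incompatible with a proper tensor factorization or with the bounded dimension of the irreducible modules of an extraspecial normalizer. The key technical point throughout will be that the \emph{largeness} of the ppd-element (i.e.\ $r>e+1$, or the square condition $(e+1)^2\mid q^e-1$) is exactly what eliminates the near-miss examples $\GL(1,q^d).d$ and their analogues that appear in part~(1) of Lemma~\ref{lem2.4}, so that no solvable configuration survives except the genuine field-extension embedding $\GL(d/c,q^c).c$ recorded in the conclusion.
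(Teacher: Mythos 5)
Your proposal is correct and follows essentially the same route as the paper: invoke the classification of irreducible subgroups of $\GL(d,q)$ containing a single \emph{large} basic ppd-element, observe that the largeness hypothesis kills the extraspecial/tensor/near-miss cases (in particular $\GL(1,q^d).d$), and reduce to the dichotomy between the classical/nearly simple (hence nonsolvable) types and the extension-field embedding $\GL(d/c,q^c).c$ with $c\mid\gcd(d,e)$. The only correction needed is the citation: the single-large-ppd-element classification you want is \cite[Theorem 3.1]{NP2} (not a result in \cite{NP1}, whose Theorem 4.7 requires two ppd-elements), and in that theorem the exclusion of cases (b), (d) and (c)(i) under the largeness hypothesis is already built in, so the direct case-by-case elimination you anticipate having to do by hand is not necessary.
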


\begin{proof}
This lemma follows from \cite[Theorem 3.1]{NP2}.
Note that the cases (b), (d) and (c)(i)
of that theorem do not occur, because of the assumption that the 
$\bbppd(q,e)$-element is large. 
So either $G$ is of classical type (part (a) of  \cite[Theorem 3.1]{NP2}) 
and hence is nonsolvable because
of the assumptions on $(d,q)$ (see the details in the proof of
Lemma~\ref{lem2.4}, case (a)), or $G$ is of nearly simple type (part (e) of  
\cite[Theorem 3.1]{NP2}) and hence is nonsolvable, 
or $G$ is of extension field type (part (c)(ii) of  \cite[Theorem 3.1]{NP2}),
that is to say, $G$ is conjugate to a subgroup 
of $\GL(d/c,q^c).c$ for some prime divisor $c$ of $\gcd(d,e)$.
\end{proof}
\medskip
\par
We now complete the proof of Theorem~B for classical groups of Lie type.
\begin{proposition}\label{prop3.2}
Theorem~B holds for all classical finite simple groups of Lie type.
\end{proposition}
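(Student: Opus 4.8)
The plan is to reduce to the classical simple groups not covered by Proposition~\ref{prop2.7} and to settle each with the single-element tool Lemma~\ref{lem3.1}, reserving direct structural arguments for the very low rank groups. Comparing the ranges of Proposition~\ref{prop2.7} with the full list of classical simple groups (and using $\POm^\circ(3,q)\cong\PSL(2,q)$, $\POm^\circ(5,q)\cong\PSp(4,q)$, so that no odd-dimensional orthogonal groups survive separately), the cases still to be treated are the positive-dimensional families $\PSL(2,q)$, $\PSL(3,q)$, $\PSU(3,q)$, $\PSp(4,q)$, $\PSU(4,q)$, $\PSU(6,q)$ and $\POm^{\pm}(8,q)$, together with the finitely many isolated groups over $\F_2$ omitted from Proposition~\ref{prop2.7}, namely $\PSL(6,2)$, $\PSp(6,2)$, $\PSp(8,2)$, $\PSU(5,2)$ and $\POm^{\pm}(10,2)$. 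As in Proposition~\ref{prop2.7} I would work in the quasisimple preimage $\hat S$ acting on its natural module $V$ over $\F_q$ (over $\F_{q^2}$ in the unitary case), and choose the element orders $a,b$ coprime to $|\zent{\hat S}|$, so that nonsolvability of $\gen{\hat x,\hat y}$ passes to the central factor $\gen{x,y}$; thus it suffices to argue inside $\hat S\leq\GL(d,q)$.

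For a remaining family of dimension $d\geq 3$ the main step is an application of Lemma~\ref{lem3.1}. I would select $a\in\lbpd q e$ for an exponent $e$ with $d\geq e>d/2$ as large as the torus structure of $\hat S$ permits, so $a\in oe(\hat S)$ by Remark~\ref{Rem 2.3}, and a second prime $b\in\bpd q f$ for a smaller admissible exponent $f$. Given $\hat x,\hat y\in\hat S$ of orders $a,b$, put $G=\gen{\hat x,\hat y}$. If $G$ is reducible on $V$ I would pass to the composition factor $U$ on which the $\lbpd q e$-element $\hat x$ acts nontrivially (Remark~\ref{Rem 2.5}); there the image of $\hat x$ remains an $\lbpd q e$-element and $\dim U=d_0$ satisfies $d\geq d_0\geq e>d_0/2$, so after this reduction $G$ is irreducible of dimension $d_0\geq 3$. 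Excluding the pairs $(d_0,q)\in\{(3,2),(3,3),(3,4),(4,2),(4,3)\}$ barred from Lemma~\ref{lem3.1}, that lemma yields the dichotomy: either $G$ is nonsolvable, as wanted, or $G$ is conjugate into an extension-field subgroup $\GL(d_0/c,q^c).c$ with $c$ a prime dividing $\gcd(d_0,e)$.

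Ruling out the extension-field alternative is the step I expect to be the main obstacle, and it is where the second generator earns its place. Writing $q=p^k$, the prime $s\in\bpd q f$ dividing $|\hat y|$ lies in $\pd p {fk}$, so, by the divisibility analysis in case~(d) of the proof of Lemma~\ref{lem2.4}, $s$ divides $|\GL(d_0/c,q^c).c|$ only when $c\mid f$ and $cf\leq d_0$. I would therefore choose $e$ and $f$ so that $\gcd(e,f)$ is small (taking $f=e-1$ where possible), whence the only primes $c$ dividing $\gcd(d_0,e)$ fail to divide $f$, contradicting $c\mid f$ and forcing $G$ nonsolvable; this is exactly the mechanism of Case~(1) of the proof of Lemma~\ref{lem2.6}. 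When $d$ is even or $S$ is unitary the prime $c=2$ may be unavoidable, and there I would instead fold the extension-field group back into $\GL(d_0/c,q^c)$, verify that $\hat x^c,\hat y^c$ persist as primitive prime divisor elements, and recurse on the smaller dimension, as in the reduction inside Lemma~\ref{lem2.6}.

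The recalcitrant cases are the groups of very small rank in which this argument cannot get started. For $\PSL(2,q)$ the dimension is $2$, outside the hypotheses of Lemma~\ref{lem3.1}; for $\PSU(3,q)$ and $\PSp(4,q)$ (and, in the relevant exponent range, for $\POm^{\pm}(8,q)$) the torus carrying the top primitive prime divisor has a \emph{solvable} normaliser, so a single large ppd-element does not by itself force nonsolvability. For these I would argue from the known subgroup structure --- Dickson's list for $\PSL(2,q)$ and the Mitchell--Hartley descriptions for $\PSU(3,q)$ and $\PSp(4,q)$ --- taking $a$ and $b$ to be large primes drawn from complementary maximal tori (for instance $a\mid q+1$ and $b\mid q-1$ in $\PSL(2,q)$) so large that no Borel, dihedral, torus-normaliser, subfield or exceptional solvable subgroup can contain elements of both orders, so that $\gen{x,y}$ must be all of the (nonsolvable) group. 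Finally, the finitely many remaining pairs --- the small-field groups $\PSL(6,2)$, $\PSp(6,2)$, $\PSp(8,2)$, $\PSU(5,2)$, $\POm^{\pm}(10,2)$ and the pairs $(d,q)$ excluded from Lemma~\ref{lem3.1} --- I would dispatch exactly as in Proposition~\ref{prop3.5}, choosing two primes dividing $|S|$ and reading off from the maximal-subgroup lists of \cite{ATLAS} that every subgroup of order divisible by their product is nonsolvable.
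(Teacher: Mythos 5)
Your overall architecture coincides with the paper's: reduce to the families and isolated groups left over from Proposition~\ref{prop2.7}, attack each family by choosing $a\in\lbpd{q}{e}$ with $e>d/2$ and a second element order $b$, apply Lemma~\ref{lem3.1}, use $b$ to exclude the extension-field alternative (descending into $\GL(d/c,q^c)$ when $c=2$ is unavoidable), and finish the tiny groups from the \cite{ATLAS}. Your inventory of surviving families is also correct. However, there are two genuine gaps. The first and more serious one is that your recipe silently assumes $\lbpd{q}{e}\neq\emptyset$, and Proposition~\ref{Lbpd} lists real exceptions that generate exceptional cases well beyond ``the pairs excluded from Lemma~\ref{lem3.1} plus the small-field groups'': since $\lbpd{q}{6}=\emptyset$ for $q\in\{2,3,5\}$ and $\lbpd{q}{4}=\emptyset$ for $q\in\{2,3\}$, the groups $\PSU(3,3)$, $\PSU(3,5)$, $\PSp(4,3)$, $\POm^+(8,2)$, $\POm^+(8,3)$ and $\POm^+(8,5)$ all need separate treatment. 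All but the last happen to be in the \cite{ATLAS}; $\POm^+(8,5)$ is not, so your plan to ``read off the maximal-subgroup lists'' cannot be executed there. The paper has to apply \cite[Theorem 3.1]{NP2} directly to an irreducible $\langle \hat x,\hat y\rangle\leq\GL(8,5)$ with a $\bpd{5}{6}$-element that is \emph{not} large, eliminate the non-classical cases by divisibility, and then descend to $\GL(4,5^2)$, where $7\in\lbpd{5^2}{3}$ finally re-enables Lemma~\ref{lem3.1}. That extra idea is missing from your proposal.

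The second gap is in your reducible case: passing to the composition factor $U$ on which the $\lbpd{q}{e}$-element $\hat x$ acts nontrivially does not by itself guarantee that $\hat y$ acts nontrivially on $U$; if $\hat y^U=1$ then $G^U=\langle \hat x^U\rangle$ is cyclic, sits inside the Singer normaliser, and the extension-field alternative of Lemma~\ref{lem3.1} cannot be excluded (nor does nonsolvability of $G$ follow). One needs either $e=d$ (so $G$ is irreducible outright, as for $\PSL(3,q)$, $\PSU(3,q)$, $\PSp(4,q)$) or a choice of $f$ with $f>d-e$ so that Remark~\ref{Rem 2.5}(2) forces both generators onto the \emph{same} factor; the paper verifies this in every case, and its choices of $b$ are not always of your form $\bpd{q}{e-1}$ --- for $\PSL(3,q)$ and $\PSU(3,q)$ it must take $b=p$, the defining characteristic, precisely because $\bpd{q}{2}$ can be empty (Mersenne $q$) and because $b$ must fail to divide $|\GL(1,q^3).3|$. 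Your alternative of invoking Dickson and Mitchell--Hartley for $\PSL(2,q)$, $\PSU(3,q)$ and $\PSp(4,q)$ is legitimate and would work (the paper uses Dickson only for $\PSL(2,q)$ and for the descent of $\PSp(4,q)$ into $\GL(2,q^2)$), but it does not repair the two gaps above.
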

\begin{proof}
{\bf (1):} \emph{Assume first that $S=\PSL(d,q)$ with $d\geq 2$.}
\par
Suppose first that $d=2$ and $q\leq 7$. Theorem~B holds for the groups 
$\PSL(2,4)\cong \PSL(2,5)\cong A_5$ by Proposition~\ref{prop3.4}. 
Also, for $S=\PSL(2,7)$, the maximal 
subgroups of order divisible by $7$ have order $21$, so 
$\langle x,y\rangle=S$ whenever $x,y\in S$ with $|x|=2$ and $|y|=7$.
\par
Next we consider $S=\PSL(2,q)$ with $q\geq 8$. 
Take $a=(q+1)/k$ and $b=(q-1)/k$,
where $k=\gcd(q-1,2)$. Then $a\geq 5$, $b\geq 4$ and 
$a,b\in oe(S)$ (see Theorems II.8.3 and II.8.4 in [H]). The 
classification of the subgroups of $\PSL(2,q)$ (see Theorem II.8.28 in [H])
implies that $\langle x,y\rangle=S$ whenever $x,y\in S$ with
$|x|=a$ and $|y|=b$.
\par 
Suppose next that $S=\PSL(3,q)$. The group 
$\PSL(3,2)\cong \PSL(2,7)$ has 
been dealt with already. If $S=\PSL(3,q)$ with $q=3$ or $4$, then 
taking $x,y\in S$ with $(|x|,|y|)=(13,2)$ or $(7,5)$, respectively, 
we find that $S=\langle x,y\rangle$ (see \cite[pp.\,13,23]{ATLAS}).
Hence we may assume that $q\geq 5$. By Proposition~\ref{Lbpd}, there 
exists $a\in \lbpd q 3$. If $q=3^k$,
then $k>1$ and we take $b\in \ppd(q,2)$ which, by Theorem~\ref{Z}, 
is nonempty. On the other hand, if $q$ is 
not a power of $3$, then we choose $b=p$ (recall that $q$ is a power of $p$). 
As in the proof of 
Proposition~\ref{prop2.7},
we work with  $\hat S=\SL(3,q)\leq \GL(3,q)$ in order to apply 
Lemma~\ref{lem3.1}. (We shall 
often do this throughout the proof without further reference.) 
Let $\hat x,\hat y\in \hat S$, with $|\hat x|$ a multiple of $a$
and $|\hat y|$ a multiple of $b$, and let $X=\langle \hat x,\hat y\rangle$. 
Since $\hat x\in X$,  $X$ is an irreducible subgroup of $\GL(3,q)$, and
since $\hat y\in X$, $|X|$ does not divide $|\GL(1,q^3).3|$. Hence, by 
Lemma~\ref{lem3.1}, $X$ is nonsolvable. It follows that 
$\langle x,y\rangle$ is nonsolvable for every $x,y\in S$ with $|x|=a$ and 
$|y|=b$.
\par
Thus we may assume that $d\geq4$. For these groups $S$, Theorem~B 
follows from Proposition~\ref{prop2.7}(1), unless
$(d,q)=(6,2)$. 
For $S=\PSL(6,2)\cong \GL(6,2)$, consider $a=31\in \lbpd 2 5$ and
$b=7\in \bpd 2 3$, and note that $a,b\in oe(S)$. Let $x,y\in \GL(6,2)$ 
with $|x|=31$ and $|y|=7$, and
set $X=\langle x,y\rangle $. If $X$ is reducible on $V=V(6,2)$,
then, since $x\in X$, $X$ acts irreducibly on some $X$-composition 
factor $U$ of $V$ of dimension $5$ and $X^U=\langle x^U,y^U\rangle  \leq 
\GL(5,2)$. By Remark~\ref{Rem 2.5}(2), $|x^U|=|x|=31$ and $|y^U|=|y|=7$, 
and hence $X^U$ is nonsolvable by  Lemma~\ref{lem2.6} applied 
with $\delta=2, j=0$. 
Consequently, also $X$ is nonsolvable.
If $X$ is irreducible, then by Lemma~\ref{lem3.1} (applied with $d=6, e=5$),  
$X$ is nonsolvable, since $\gcd(6,5)=1$.
\par
{\bf (2):} \emph{Next let $S=\PSp(d,q)'$ with $d\geq4$ and $d$ even (noting 
that $\PSp(2,q)\cong \PSL(2,q)$ has been dealt with above).}
\par
First consider $d=4$. The result for $\PSp(4,2)'\cong A_6$ follows from
Proposition~\ref{prop3.4}.  If $S=\PSp(4,3)$, we have $5, 9\in oe(S)$ and
no maximal subgroup of $S$ contains elements of both orders $5$ and $9$ (see  
\cite[p.\,26]{ATLAS}). Hence $S=\langle x,y\rangle$ for all $x,y\in S$ 
with $|x|=5$ and $|y|=9$. If $S=\PSp(4,4)$, then $5,17\in oe(S)$ and (see 
\cite[p.\,44]{ATLAS}) the only maximal subgroups of $S$ containing an 
element of order $17$ are of the form $\PSL(2,16):2$, and every subgroup 
of such a group of order divisible by both $17$ and $5$ contains $\PSL(2,16)$. 
Hence $\langle x,y\rangle$ is nonsolvable whenever $x,y\in S$ with
$|x|=17$ and $|y|=5$.
\par
So suppose that $q\geq 5$ and take $a\in\lbpd q4$, which is nonempty by 
Proposition~\ref{Lbpd}, and $b=(q^2-1)/\gcd(2,q-1)$. 
Note that $a,b\in oe(S)$, since $\PSp(2,q^2)
\cong \PSL(2,q^2)$ is isomorphic to a subgroup of $S$.
We consider $\hat S=\Sp(4,q)\leq \GL(4,q)$. Let $\hat x,\hat y\in 
\hat S$, with $|\hat x|$ a multiple of $a$ and 
$|\hat y|$ a multiple of $b$, and let $ X=\langle \hat x,\hat y\rangle$. 
Then $\hat x$ is an  $\Lbpd(q,4)$-element of $X$, and in particular 
$\hat x$ acts irreducibly on $V(4,q)$. Hence $X$ is an irreducible 
subgroup of $\GL(4,q)$. By Lemma~\ref{lem3.1}, 
either $X$ is nonsolvable
or $X$ is (conjugate to) a subgroup of $\GL(2,q^2).2$. Assume the latter. Then 
$\langle \hat x^2,\hat y^2\rangle \leq \GL(2,q^2)$, $|\hat x^2|$ is a 
multiple of the large primitive divisor $a$ of $q^4-1$ and 
$|\hat y^2|$ is a multiple of $b/\gcd(b,2)\geq 6$. Hence, 
by the classification of the subgroups of $\PSL(2,q^2)$ 
(see \cite[Theorem II.8.27]{H}), 
we conclude that $\langle \hat x^2,\hat y^2\rangle \geq  \SL(2,q^2)$ 
and hence, in particular, 
$X=\langle \hat x,\hat y\rangle$ is nonsolvable.
\par
By part (2) of Proposition~\ref{prop2.7}, we are left with 
the following cases: $\PSp(6,2)$  and $\PSp(8,2)$. 
If $S=\PSp(6,2)$, then $15, 7\in oe(S)$, and for all $x,y\in S$
with $|x|=15, |y|=7$, the group $X=\langle x,y\rangle$ is nonsolvable.
This is seen as follows: one checks from \cite[p.\,46]{ATLAS} that each 
maximal subgroup of $S$ of order divisible by $35$ is isomorphic to $S_8$.  
Moreover, a subgroup of $S_8$ generated by two elements, of orders $15$ and 
$7$, contains $A_8$, and in fact is equal to $A_8$.  
Thus, $X=S$ or $X\cong A_8$.
\par
Finally, let $S=\PSp(8,2)\cong \Sp(8,2)<\GL(8,2)$. Then $17, 7\in oe(S)$. By 
\cite[p.\,123]{ATLAS}, each maximal subgroup
of $S$ of order divisible by $17\cdot 7$ is isomorphic to $\POm^-(8,2):2$.
By \cite[p.\,89]{ATLAS}, each  maximal subgroup of $\POm^-(8,2)$ of order 
divisible by $17$ is isomorphic to  $\PSL(2,16):2$, and so contains no 
elements of order $7$. 
Thus $\langle x,y\rangle$ is nonsolvable whenever
$x,y\in S$ with $|x|=17$ and $|y|=7$.
\par
{\bf (3):} \emph{Assume now that $S=\PSU(d,q)$, with $d\geq3$ and $d$ odd.} 
\par By Proposition~\ref{prop2.7},
we need only consider  the cases $S=\PSU(3,q)$ with $q\geq3$ (since $\PSU(3,2)\cong 3^2:Q_8$ is solvable), and $S=\PSU(5,2)$. 
By \cite[pp.\,72-73]{ATLAS}, if $S=\PSU(5,2)$, then $11,15\in oe(S)$ and each maximal subgroup of $S$ of order divisible 
by $11$ is isomorphic to $\PSL(2,11)$ and
contains no elements of order $15$. Thus if $x,y\in S$ with $|x|=11$ and $|y|=15$, then
$\langle x,y\rangle=S$.
\par
Therefore we may assume that $S=\PSU(3,q)$ with $q\geq3$.  
By \cite[pp.\,14,\,34]{ATLAS}, if $S=\PSU(3,q)$ with $q\in\{3,5\}$, then $7,8\in oe(S)$ and each maximal subgroup of $S$ 
of order divisible by $7$ is isomorphic to $\PSL(2,7)$ if $q=3$, and  to $A_7$ if $q=5$, and neither of these groups 
contains an element of order $8$.  Thus if $x,y\in S$ with $|x|=7$ and $|y|=8$, then
$\langle x,y\rangle=S$.
So we may assume that $q\neq 2,3,5$. Then by Proposition~\ref{Z},  
$\Lbpd(q,6)\neq \emptyset$. Let $a\in \Lbpd(q,6)$, and note that $a\in oe(S)$ by 
Remark~\ref{Rem 2.3}. Also let $b=p$ if $p\neq 3$, and $b=(q-1)/2$ if $p=3$ (recall that $q$ is a power 
of $p$), and note that $b\in oe(S)$ since $\PSU(2,q)\cong \PSL(2,q)$ 
is isomorphic to a subgroup of $S$. Now let $\hat S=\SU(3,q)\leq \GL(3,q^2)$, and note 
that $a,b\in oe(\hat S)$
and that $\gcd(ab,|\mathbb Z(\hat S)|)=1$. Consider $\hat x,\hat y\in \hat S$ with $|\hat x|$ a multiple
of $a$ and $|\hat y|$ a multiple of $b$, and let $X=\langle \hat x,\hat y\rangle$. Since
$a$ is a primitive prime divisor of $(q^2)^3 -1$, $X$ is an irreducible subgroup of $\GL(3,q^2)$. Thus
by Lemma~\ref{lem3.1}, either $X$ is nonsolvable or $X$ is a subgroup of $X_0=\GL(1,q^3).3$. Suppose that $X\leq X_0$. 
If $p\neq 3$ then $b=p$ does not divide $|X_0|$. Hence 
$p=3$ and $b=(q-1)/2$. 
However $\hat x\in X_0$ and $|\hat x|$ is a multiple of $a\in \Lbpd(q,6)$, so $|\hat x|$ does not divide $|X_0|$,
a contradiction. Hence $X$ is nonsolvable and, consequently, $\langle x,y\rangle$
is nonsolvable whenever $x,y\in S$ with $|x|=a$ and $|y|=b$.
\par
{\bf (4):} \emph{Assume now that $S=\POm^{\circ}(d,q)$, with $d$ odd and $d\geq 3$.}
\par Since $\POm^{\circ}(2m+1,2^k)\cong \PSp(2m,2^k)$, for all $m$ and $k$, we may assume that $q$ is odd. 
Also since $\POm^{\circ}(3,q)\cong \PSL(2,q)$ and
$\POm^{\circ}(5,q)\cong \PSp(4,q)$, we may assume that $d\geq7$. In this case Theorem~B  follows 
from Proposition~\ref{prop2.7}.
\par
{\bf (5):} \emph{Assume now that $S=\PSU(d,q)$, with $d$ even.}
\par Since $\PSU(2,q)\cong \PSL(2,q)$, we may assume that $d\geq4$, and 
it follows by Proposition 3.7 that we only have to check dimensions $d=4$ and $d=6$.
\par
Let $S=\PSU(4,q)$. Since $\PSU(4,2)\cong \PSp(4,3)$, we may assume that $q\geq 3$.
For $S=\PSU(4,3)$, it follows from \cite[p.\,52-53]{ATLAS} that $7,9\in oe(S)$
and each maximal subgroup of $S$ of
order divisible by $7$ is isomorphic to $\PSL(3,4)$, $\PSU(3,3)$ or $A_7$, and hence contains no elements of order $9$. 
Thus $\langle x,y\rangle=S$ whenever $x,y\in S$ with $|x|=7$ and $|y|=9$.
\par
Thus we may assume that $S=\PSU(4,q)$ with $q\geq 4$. Then, by Theorem~\ref{Z} and Proposition~\ref{Lbpd}, 
both $\Lbpd(q^2,3)$ and $\bbppd(q,4)$
are nonempty. Let $a\in \Lbpd(q^2,3)$ and $b\in \bbppd(q,4)$, and note that, by Remark~\ref{Rem 2.3}, 
$a,b\in oe(S)$ (since $(q^4-1)(q^3+1)$ divides $|S|$). 
Consider $\hat S=\SU(4,q)\leq \GL(4,q^2)$ acting on the natural module  $V=V(4,q^2)$. Observe that 
$\gcd(ab,|\mathbb Z(\hat S)|)=1$ and hence $a,b\in oe(\hat S)$. 
Let $\hat x,\hat y\in \hat S$ with $|\hat x|$ a multiple
of $a$ and $|\hat y|$ a multiple of $b$, and let $X=\langle \hat x,\hat y\rangle$. 
By Remark~\ref{Rem 2.5}, there exists an $X$-composition factor $U$ of $V$ such that 
both $\hat x$ and $\hat y$ act nontrivially on $U$, with $\dim _{\mathbb F_{q^2}}(U)\geq 3$. 
Moreover $\hat x^U$ is an $\Lbpd(q^2,3)$-element and $\hat y^U$ is an $\Lbpd(q^2,2)$-element of $X^U$ (since $b\geq 5$).
Assume first that $X$ acts reducibly on $V$. 
Then $\dim _{\mathbb F_{q^2}}(U)=3$, and hence, by Lemma~\ref{lem2.6} with $\delta=1,j=0$, the group $X^U$ is nonsolvable. Thus 
also $X$ is nonsolvable. Therefore we may assume that $X$ is an irreducible 
subgroup of $\GL(4,q^2)$. Then, by Lemma~\ref{lem3.1}, $X$ is nonsolvable, as
$a\in \Lbpd(q^2,3)$ and the extension field case cannot occur because 
$\gcd(4,3)=1$.
\par
Finally let $S=\PSU(6,q)$. If $S=\PSU(6,2)$, then by \cite[pp.\,39,\,115]{ATLAS}, 
$7,11\in oe(S)$, each maximal
subgroup of $S$ of order divisible by $7\cdot 11$ is isomorphic to $M_{22}$, and $M_{22}$ has no maximal subgroup 
of order divisible by $7\cdot 11$. Hence  
$\langle x,y\rangle=S$ or
$\langle x,y\rangle\cong M_{22}$ whenever $x,y\in S$ with $|x|=7$ and $|y|=11$.
Thus we may assume that $q>2$. Then, by Theorem~\ref{Z} and Proposition~\ref{Lbpd}, both $\Lbpd(q^2,5)$ and $\bbppd(q,6)$
are nonempty. Let $a\in \Lbpd(q^2,5)$ and $b\in \bbppd(q,6)$. Since $(q^5+1)(q^3+1)$
divides $|S|$, it follows by Remark~\ref{Rem 2.3} that $a,b\in oe(S)$. Consider now 
$\hat S=\SU(6,q)\leq \GL(6,q^2)$ acting on the natural module $V=V(6,q^2)$. Since $\gcd(ab,|Z(\hat S)|)=1$, it follows that 
$a,b\in oe(\hat S)$. Let $\hat x,\hat y\in \hat S$ with $|\hat x|$ a multiple
of $a$ and $|\hat y|$ a multiple of $b$, and let $X=\langle \hat x,\hat y\rangle$.
If $X$ is an irreducible subgroup of $\GL(6,q^2)$, then we conclude by 
Lemma~\ref{lem3.1} that $X$ is nonsolvable, since $a\in \Lbpd(q^2,5)$ and the 
extension field case cannot occur because $\gcd(6,5)=1$. So we may assume that $X$ acts 
reducibly on  $V$.
By Remark~\ref{Rem 2.5}(2), there exists an $X$-composition factor $U$ of $V$ such that
both $\hat x$ and $\hat y$ act nontrivially on $U$, with $\dim _{\mathbb F_{q^2}}(U)\geq 5$.
It follows that $\dim _{\mathbb F_{q^2}}(U)=5$. Moreover, $\hat x^U$ is an $\Lbpd(q^2,5)$-element and $\hat y^U$ 
is a $\bbppd(q^2,3)$-element of $X^U$.
Hence by Lemma~\ref{lem2.6} with $\delta=2,j=0$, the group $X^U$ is nonsolvable and so is $X$.
\par   
{\bf (6):} \emph{Let $S=\POm^-(d,q)$, with $d$ even and $d\geq 4$.}
\par
Since $\POm^-(4,q)\cong \PSL(2,q^2)$ and $\POm^-(6,q)\cong \PSU(4,q)$,
we may assume that $d\geq 8$. Hence we are left, by Proposition~\ref{prop2.7}, only 
with the cases $\POm^-(8,q)$, for $q\geq2$, and $\POm^-(10,2)$.
\par
Let $S=\POm^-(8,q)$. For $S=\POm^-(8,2)$, it follows from \cite[p.\,89]{ATLAS} that $7,17\in oe(S)$
and each maximal subgroup of $S$ of
order divisible by $17$ is isomorphic to $\PSL(2,16):2$, and hence contains no elements of order $7$. 
Thus $\langle x,y\rangle=S$ whenever $x,y\in S$ with $|x|=7$ and $|y|=17$.
So we may assume that $q>2$. Then, by Theorem~\ref{Z}, both $\bbppd(q,8)$ and $\bbppd(q,6)$ are nonempty. 
Let $a\in \bbppd(q,8)$ and $b\in \bbppd(q,6)$.
Since $(q^4+1)(q^6-1)$ divides $|S|$, it follows by Remark~\ref{Rem 2.3} that $a,b\in oe(S)$. Consider now
$\hat S=\Omega ^-(8,q)\leq \GL(8,q)$. Since $\gcd(ab,|\mathbb Z(\hat S)|)
=1$, it follows that
$a,b\in oe(\hat S)$. Let $\hat x,\hat y\in \hat S$ with $|\hat x|$ a multiple
of $a$ and $|\hat y|$ a multiple of $b$, and let $X=\langle \hat x,\hat y\rangle$.   
Then $X$ is nonsolvable by Lemma~\ref{lem2.6} with $\delta=2,j=0$. Hence $\langle x,y\rangle$
is nonsolvable whenever $x,y\in S$ with $|x|=a$ and $|y|=b$.
\par
Finally, let $S=\POm^-(10,2)\leq \GL(10,2)$. Consider $11\in \bbppd(2,10)$
and $17\in \bbppd(2,8)$, and note that 
$11,17\in oe(S)$. Then by Lemma~\ref{lem2.6} with $\delta=2,j=0$, the subgroup $\langle x,y\rangle$
is nonsolvable whenever $x,y\in S$ with $|x|=11$ and $|y|=17$.
\par
{\bf (7):} \emph{Let $S=\POm^+(d,q)$, with $d$ even.}
\par Since $S$
is not nonabelian simple if $d=2$ and $d=4$, and $\POm^+(6,q)\cong \PSL(4,q)$, we may assume that $d\geq8$. 
Then by Proposition~\ref{prop2.7}, we only need to consider the cases $\POm^+(8,q)$, for $q\geq2$, and $\POm^+(10,2)$.
\par
Let $S=\POm^+(8,q)$. For $S=\POm^+(8,2)$, it follows from \cite[p.\,85]{ATLAS} that $7,15\in oe(S)$
and each maximal subgroup of $S$ of
order divisible by $35$ is isomorphic to $\PSp(6,2)$, $2^6:A_8 $ or $A_9$.
We have shown above that a subgroup of $\PSp(6,2)$
which contains elements of orders $15$ and $7$ is nonsolvable. Also, in the last paragraph of the proof of 
Proposition~\ref{prop3.4}, we saw that a subgroup of $A_9$ containing elements of orders $7$ and $5$ is $A_d$ 
for some $d\geq7$. Thus also a subgroup of $2^6:A_8 $ containing such elements has a composition factor $A_7$ 
or $A_8$.  Hence $\langle x,y\rangle$ is nonsolvable whenever $x,y\in S$ with $|x|=7$ and $|y|=15$. Thus we may assume that $q>2$. 
\par 
For $S=\POm^+(8,3)$, it follows from \cite[pp.\,140--141 and 54--55]{ATLAS} that $7,15\in oe(S)$, each maximal subgroup of $S$ of
order divisible by $7$ is isomorphic to 
$\POm^{\circ}(7,3)$, $\POm^+(8,2)$ or $2\centerdot \PSU(4,3)\centerdot 2^2$, and the last of these groups 
contains no elements of order $15$.
Let $x,y\in S$ with $|x|=7$ and $|y|=15$ and let $X=\langle x,y\rangle$.
Assume that $X<S$ and let $M$ be a maximal subgroup of $S$ containing
$X$. Then $M$ is $\POm^{\circ}(7,3)$ or $\POm^+(8,2)$.
Suppose first that $M=\POm^{\circ}(7,3)$. Observe that $5,7\in oe(M)$, 
$7\in \bbppd(3,6)$ and $5\in \bbppd(3,4)$.
By Lemma~\ref{lem2.6} with $\delta=2, j=1$, every subgroup of $\hat M=\Omega ^{\circ}(7,3)\leq \GL(7,3)$
containing elements of orders $7$ and $5$
is nonsolvable. Hence in this case $X$ is nonsolvable. On the other hand, if 
$M=\POm^+(8,2)$, then by the previous
paragraph any subgroup of $M$ containing elements of orders $7$ and $15$
is nonsolvable. Thus also in this case $X$ is nonsolvable.
So we may assume that $q\geq4$.
\par
Next we deal with $S=\POm^+(8,5)$. Notice that both $7\in \bbppd(5,6)$ and $13\in \bbppd(5,4)$ belong to $oe(S)$. 
Let $X=\langle \hat x,\hat y\rangle\leq \Omega ^+(8,5)\leq\GL(8,5)$, where $|\hat x|$ is divisible by $7$ and $|\hat y|$
is divisible by $13$. We shall prove that $X$ is nonsolvable. 
Now $\hat x$ is a $\bbppd(5,6)$-element of $X$ (even if not a large one), and  
$\hat y$ is an $\Lbpd(5,4)$-element of $X$. Assume first that $X$ acts reducibly on
$V=V(8,5)$. By Remark~\ref{Rem 2.5}(2) there exists an $X$-composition factor 
$U$ of $V$ of dimension $d_0=\dim_{\mathbb F_5}(U)\geq 6$ such that $|\hat x^U|_7\geq7$ and $|\hat y^U|_{13}\geq13$. 
Thus $d_0\in \{6,7\}$, and by Lemma~\ref{lem2.6} the group $X^U$ is nonsolvable in both cases $d_0=6$ (taking $\delta=2,j=0$)
and $d_0=7$ (taking $\delta=2,j=1$).
\par
Thus we may assume that $X$ is an irreducible subgroup of $\GL(8,5)$. Observe that here we cannot  use Lemma~\ref{lem2.6}, 
or even Lemma~\ref{lem3.1}, because $\Lbpd(5,6)=\emptyset$. So we have to 
apply \cite[Theorem 3.1]{NP2} directly (with $d=8, e=6$), checking each of the cases (a)--(e) of that theorem. If
$X$ is in case (a) or (e), then $X$ is nonsolvable. Since $7\cdot 13$ divides $|X|$, neither of the cases (b) nor (d) 
holds for $X$. So we may assume that case (c) holds for $X$, which implies that $X$ is (isomorphic to) a subgroup
of $\GL(4,5^2)\cdot 2$. Hence $X_0:=\langle \hat x^2,\hat y^2\rangle\leq \GL(4,5^2)$.
Observe that $X_0$ acts irreducibly on $V(4,5^2)$, as it acts irreducibly on $V$.
Now, $7\in \Lbpd(5^2,3)$, and hence we can apply Lemma~\ref{lem3.1}. Note that $X_0$ is not a subgroup of 
$\GL(2,5^4)\cdot 2$, because $|\GL(2,5^4)\cdot 2|$ is not divisible
by $7$. Therefore we conclude that  $X_0$ is nonsolvable, and hence also  $X$
is nonsolvable.
\par 
Now consider $S=\POm^+(8,q)$ with $q\neq 2,3,5$.  By Theorem~\ref{Z} and Proposition~\ref{Lbpd}, 
both $\Lbpd(q,6)$ and $\bbppd(q,4)$ are 
nonempty. Let $a\in \Lbpd(q,6)$ and $b\in \bbppd(q,4)$.
Since $(q^6-1)(q^4-1)$ divides $|S|$, it follows by Remark~\ref{Rem 2.3} that $a,b\in oe(S)$. Consider now
$\hat S=\Omega ^+(8,q)\leq \GL(8,q)$ acting on  on $V=V(8,q)$. 
Since $\gcd(ab,|\mathbb Z(\hat S)|)=1$, it follows that
$a,b\in oe(\hat S)$. Let $\hat x,\hat y\in \hat S$ with $|\hat x|$ a multiple
of $a$ and $|\hat y|$ a multiple of $b$, and let $X=\langle \hat x,\hat y\rangle$. 
Now $\hat x$ is an $\Lbpd(q,6)$-element and $\hat y$ is
a $\bbppd(q,4)$-element of $X$. By Remark~\ref{Rem 2.5}(2), there is  an 
$X$-composition factor $U$ of $V$ with $d_0=\dim _{\mathbb F_q}(U)\in \{6,7,8\}$ such that $|\hat x^U|_a=|\hat x|_a\geq a$ 
and $|\hat y^U|_b=|\hat y|_b\geq b$. It follows by Lemma~\ref{lem2.6} that $X^U$, and hence also $X$, is nonsolvable 
if $d_0=6$ (taking $\delta=2,j=0$), and also if $d_0=7$ (taking $\delta=2, j=1$).
Thus we may assume that $d_0=8$, that is, $X$ is irreducible on $V$. Then by 
Lemma~\ref{lem3.1}, either $X$ is nonsolvable, as required, or $X$ is (isomorphic to) a subgroup of $\GL(4,q^2).2$. Suppose that
$X\leq \GL(4,q^2).2$ and consider $X_0=\langle \hat x^2,\hat y^2\rangle\leq \GL(4,q^2)$. By the above argument applied to $X_0$, 
we may assume that $X_0$ acts irreducibly on $V=V(4,q^2)$. It follows
then, by Lemma~\ref{lem3.1}, that either $X_0$ is nonsolvable or $X_0$ is (isomorphic to) a subgroup of $\GL(2,q^4).2$. 
However, since $a\in \Lbpd(q,6)$, $a$ is coprime to $|\GL(2,q^4).2|=2q^4(q^4-1)(q^8-1)$, and we conclude that $X_0$, 
and hence also $X$, is nonsolvable. 
\par
Finally let $S=\POm^+(10,2)$. By \cite[p.\,147]{ATLAS}, $17,31\in oe(S)$ and no maximal
subgroup of $S$ has order divisible by both $17$ and $31$. Hence 
$\langle x,y\rangle=S$ whenever $x,y\in S$ with $|x|=17$ and $|y|=31$.
\end{proof}

We now prove Theorem~B for the exceptional finite simple groups of Lie type. We make use
of five papers. The first is the paper \cite{FS} of Feit and Seitz. They prove, in \cite[Theorem 3.1]{FS}, 
the existence of certain self-centralizing cyclic maximal tori in simple groups of Lie type.
The second is the paper \cite{W} of Wiegel. He gives, in \cite[Table 1]{W}, a list of cyclic maximal tori
in exceptional groups of Lie type, with some small cases excluded, 
and in \cite[Section 4]{W} he determines the maximal subgroups
containing these tori, for each such group. We were kindly informed by Frank L\"ubeck, in a letter,
that Weigel's list of cyclic maximal tori is correct without the extra conditions on $q$ or $k$, with only 
one exception: namely the group $G_2(2)$, which has elements of order $q^2-q+1=3$ in several classes
of maximal tori. We shall refer to \cite{L} concerning this important information.
The third is the paper \cite{GM} of Guralnick and Malle, which is still in preparation.
The authors kindly informed us that their paper contains important information about
maximal subgroups of $E_7(2)_{sc}$ and $E_7(3)_{sc}$.
The forth is the paper \cite{MT} of Moret\`o
and Tiep. We use \cite[Lemma 2.3]{MT}, in a slightly `extended' form, which was kindly approved by Pham Tiep.
They prove, in Lemma 2.3, that each exceptional simple group of Lie type contains elements
$s_1$ and $s_2$ of prime orders $p_1$ and $p_2$, respectively, such that their centralizers have suitable 
orders. The `extended' version of this lemma states not only that such elements exist, but also
that the centralizers of every element of order $p_1$ or $p_2$ are of the same suitable orders.
Finally, the fifth paper is the paper \cite{GK} by Guralnick and Kantor, which provides in \cite[Proposition 6.2]{GK}  
information concerning elements of the groups excluded in \cite{W} and of the sporadic subgroups, contained in a unique 
or  small number of maximal subgroups.      
\par
We recall that every finite simple group of Lie type occurs as a composition factor of
the group of fixed points $G_F$, under a Frobenius map $F:G\to G$ of a connected reductive algebraic group $G$ 
over the algebraic closure $\overline {\mathbb F_q}$ 
of a field $\mathbb F_q$ of order $q$. 
\par
If we choose $G$ to be simply connected, then every finite simple exceptional group
of Lie type is a quotient $G_F/\mathbb Z(G_F)$. Moreover, $\mathbb Z(G_F)=1$ unless $G$ is of type $E_6$,
$^2E_6$ or $E_7$.
The following facts are used repeatedly.

\begin{lemma}\label{lemtorus}
Let $S=G_F/\mathbb Z(G_F)$, $q$ be as above, and suppose that $S$ has a cyclic maximal torus $T$ of order divisible 
by a prime $p$, such that $|S:T|$ is coprime to $p$, and  $C_S(y)=T$ for $y\in T$ of order $|T|_p$. 
Then for each $x\in S$ with $|x|=|T|$, the subgroup $\langle x\rangle$ is conjugate to $T$ in $S$, 
and in particular it is a maximal torus of order $|T|$.  
\end{lemma}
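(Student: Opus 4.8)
The plan is to prove this by a Sylow argument driven entirely by the centralizer hypothesis; no structural theory of algebraic groups is needed, the maximal-torus language entering only at the very end. Write $n=|T|$ and $n_p=|T|_p$. First I would observe that, because $|S:T|$ is coprime to $p$, we have $|S|_p=|T|_p=n_p$. Hence the Sylow $p$-subgroup $Q$ of the cyclic group $T$ --- which is cyclic of order $n_p$ and is generated by any $y\in T$ with $|y|=n_p$ --- is in fact a Sylow $p$-subgroup of $S$. I would also record that $\cent Sy$ depends only on $\gen y=Q$ and not on the particular generator chosen, so that the hypothesis $\cent Sy=T$ is meaningful for every generator of $Q$.

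Next, take $x\in S$ with $|x|=n$. Then $\gen x$ is cyclic of order $n$, so it contains a unique subgroup $P$ of order $n_p$, and by the previous paragraph $P$ is again a Sylow $p$-subgroup of $S$. By Sylow's theorem $P$ and $Q$ are conjugate in $S$, so after replacing $x$ by a suitable conjugate I may assume $P=Q$. Now I would exploit that $\gen x$ is abelian: since a generator $y$ of $Q=P$ lies in $\gen x$, the element $x$ centralizes $y$, whence $x\in\cent Sy=T$ by hypothesis.

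Finally, $x$ is an element of the cyclic group $T$ with $|x|=n=|T|$, so $x$ must generate $T$, i.e. $\gen x=T$. Undoing the Sylow conjugation then shows that the original $\gen x$ is conjugate to $T$ in $S$; and since conjugates of the maximal torus $T$ are themselves maximal tori of the same order, $\gen x$ is a maximal torus of order $|T|$, as claimed.

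I expect the argument to be short, and the only points needing care are exactly where the hypotheses are consumed: the coprimality $\gcd(|S:T|,p)=1$ is what guarantees that $P$ is a \emph{full} Sylow $p$-subgroup of $S$ (and hence $S$-conjugate to $Q$), while the centralizer condition $\cent Sy=T$ is what forces $x$ back into $T$. The two elementary facts underpinning everything are that $\cent Sy$ is independent of the chosen generator of $Q$, and that an element of order $|T|$ lying inside the cyclic group $T$ must generate it; these are the steps I would state explicitly rather than leave to the reader.
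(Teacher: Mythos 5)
Your argument is correct and is essentially identical to the paper's own proof: both identify the unique Sylow $p$-subgroup of the cyclic group $T$ as a Sylow $p$-subgroup of $S$ via the coprimality of $|S:T|$ and $p$, conjugate the corresponding subgroup of $\langle x\rangle$ onto it by Sylow's theorem, and then use the centralizer hypothesis to force the conjugate of $\langle x\rangle$ into $T$, where an order count finishes the proof. The only difference is cosmetic (you conjugate $x$ rather than $\langle x\rangle$, and you spell out that $C_S(y)$ is independent of the chosen generator), so no further comparison is needed.
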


\begin{proof}
By assumption $T$ has a unique  Sylow $p$-subgroup, say $P=\langle y\rangle$, and $P$ is a Sylow $p$-subgroup of $S$.  
Let $x\in S$ with $|x|=|T|$. Then $\langle x\rangle$ contains a subgroup $P_0$ of order $|P|$, 
so by Sylow's Theorem $P_0^g=P$ for some $g\in S$. Then $\langle x\rangle^g \leq C_S(y)$ which by assumption 
is equal to $T$. It follows that $\langle x\rangle^g=T$. 
\end{proof}

The following is an immediate corollary of Lemma~\ref{lemtorus}.

\begin{corollary}\label{lemendgame}
Let $S, T$ be as in Lemma~{\rm\ref{lemtorus}}, let $a=|T|\in oe(S)$, and suppose that
$b\in oe(S)$ is such that each maximal subgroup of $S$ containing $T$ has order coprime to $b$. Then for each $x,y\in S$ with $|x|=a$ and $|y|=b$, the group $\langle x,y\rangle = S$ and, in particular, is non-solvable.
\end{corollary}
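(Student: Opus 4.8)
The plan is to derive this corollary directly from Lemma~\ref{lemtorus}, exploiting the hypothesis that every maximal subgroup of $S$ containing the torus $T$ has order coprime to $b$. The key idea is that once we locate a copy of $T$ inside $\langle x,y\rangle$, the element $y$ of order $b$ cannot fit into any maximal overgroup of that copy of $T$, forcing $\langle x,y\rangle$ to be all of $S$.

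First I would fix $x,y\in S$ with $|x|=a=|T|$ and $|y|=b$, and set $H=\langle x,y\rangle$. By Lemma~\ref{lemtorus}, since $|x|=|T|$, the cyclic subgroup $\langle x\rangle$ is conjugate to $T$ in $S$; after replacing $(x,y)$ by a simultaneous conjugate $(x^g,y^g)$ (which changes neither the orders of the elements nor the isomorphism type of $H$, since conjugation is an automorphism of $S$), I may assume $\langle x\rangle = T$. In particular $T\leq H$.

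Next I argue by contradiction: suppose $H\neq S$. Then $H$ is contained in some maximal subgroup $M$ of $S$. Since $T=\langle x\rangle\leq H\leq M$, the subgroup $M$ is a maximal subgroup of $S$ containing $T$, so by hypothesis $|M|$ is coprime to $b$. But $y\in H\leq M$ has order $b$, so $b=|y|$ divides $|M|$, contradicting $\gcd(|M|,b)=1$. Hence $H=S$, and since $S$ is nonabelian simple it is in particular nonsolvable. This completes the argument for every such pair $x,y$.

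There is no genuine obstacle here; the only point requiring a little care is the conjugation step, where one must observe that it is legitimate to replace $x$ by a conjugate so that $\langle x\rangle$ coincides with $T$ itself rather than merely a conjugate of $T$, and that conclusions about $\langle x,y\rangle=S$ (and hence about nonsolvability) are invariant under this replacement. Everything else is an immediate consequence of Lemma~\ref{lemtorus} together with the coprimality hypothesis on the maximal overgroups of $T$.
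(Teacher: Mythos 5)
Your proof is correct and is precisely the ``immediate'' argument the paper intends: the paper states this corollary without proof, and your reasoning (conjugate so that $\langle x\rangle=T$, then note that any maximal overgroup of $\langle x,y\rangle$ would contain $T$ yet have order divisible by $b$, contradicting the coprimality hypothesis) is the standard deduction from Lemma~\ref{lemtorus}. The care you take with the conjugation step is appropriate and introduces no gap, since $\langle x^g,y^g\rangle=\langle x,y\rangle^g$ generates $S$ if and only if $\langle x,y\rangle$ does.
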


We now prove
\begin{proposition}\label{prop3.3}  
Theorem~B holds for all exceptional finite simple groups of Lie type.
\end{proposition}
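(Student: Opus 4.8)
The plan is to mirror the strategy already deployed for the classical groups in Proposition~\ref{prop3.2}, combining the primitive-prime-divisor machinery of Lemmas~\ref{lem2.4}, \ref{lem2.6}, and~\ref{lem3.1} with a careful torus-by-torus analysis supplied by the external references. For the bulk of the families $G_2(q)$, $F_4(q)$, $E_6(q)$, $^2E_6(q)$, $E_7(q)$, $E_8(q)$, and the twisted groups $^2B_2(q)$, $^2G_2(q)$, $^3D_4(q)$, $^2F_4(q)$, I would first select the element orders $a,b\in oe(S)$ by consulting the list of cyclic maximal tori of exceptional groups in \cite[Table 1]{W} (corrected via \cite{L} for the exceptional case $G_2(2)$). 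The idea is to take $a=|T|$ to be the order of a self-centralizing cyclic maximal torus $T$ whose existence is guaranteed by \cite[Theorem 3.1]{FS}, chosen so that $a$ is divisible by a primitive prime divisor of $q^e-1$ (or $q^e+1$) for a large cyclotomic index $e$, and to take $b\in oe(S)$ a second, coprime order realized by a different torus. By Corollary~\ref{lemendgame}, once $a=|T|$ with $T$ self-centralizing at the relevant prime $p$, it suffices to verify that no maximal subgroup of $S$ containing $T$ has order divisible by $b$; then every $\langle x,y\rangle$ with $|x|=a,|y|=b$ equals $S$ and is in particular nonsolvable. The maximal subgroups containing each such torus are exactly what \cite[Section 4]{W} tabulates, so this reduces to a finite bookkeeping check in each family.

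The key steps, in order, are therefore: (i) for each exceptional family fix the two cyclotomic indices and the corresponding primes, ensuring via Theorem~\ref{Z} and Proposition~\ref{Lbpd} that the requisite basic (or large basic) primitive prime divisors exist for the relevant $q$; (ii) invoke \cite[Theorem 3.1]{FS} to obtain a self-centralizing cyclic maximal torus $T$ of order $a$, and verify the self-centralizing hypothesis of Lemma~\ref{lemtorus} at the prime $p\mid a$; (iii) read off from \cite[Section 4]{W} the maximal overgroups of $T$ and confirm none has order divisible by $b$, so that Corollary~\ref{lemendgame} applies. In those families where a single torus does not immediately force $\langle x,y\rangle=S$, I would instead use the linear-group reduction: realize $S$ (or a covering group) inside some $\GL(d,q^{c})$ through a small-degree representation, show that $x$ is an $\Lbpd$-element forcing irreducibility, and apply Lemma~\ref{lem3.1} to conclude nonsolvability after ruling out the extension-field alternative by the coprimality of $a$ to $|\GL(d/c,q^{c}).c|$, exactly as was done repeatedly in the orthogonal cases of Proposition~\ref{prop3.2}.

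The genuinely hard part will be the finitely many small-parameter exceptions, namely the groups explicitly excluded from Weigel's uniform torus list and certain groups over small fields (for instance $^2B_2(8)$, $^2G_2(3)'$, $G_2(3)$, $G_2(4)$, $^3D_4(2)$, $^2F_4(2)'$, and the small $E_6$, $^2E_6$, $E_7$ cases), where the primitive-prime-divisor arguments degenerate because $\Lbpd(q,e)=\emptyset$ for the needed $e$, or where $\mathbb Z(G_F)\neq 1$ (types $E_6$, $^2E_6$, $E_7$) complicates passing between $G_F$ and $S$. For these I would fall back on the concrete maximal-subgroup data: for the groups covered by \cite{ATLAS} or \cite{ATLAS3} I would argue exactly as in Proposition~\ref{prop3.5}, choosing $p,q\in oe(S)$ so that the only maximal subgroups of order divisible by $pq$ are themselves nonabelian simple (or quickly seen to generate a nonsolvable subgroup); for $E_7(2)_{sc}$ and $E_7(3)_{sc}$ I would use the maximal-subgroup information communicated by Guralnick and Malle \cite{GM}; and for the remaining excluded tori and the sporadically-contained elements I would use \cite[Proposition 6.2]{GK}. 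Throughout, the principal obstacle is ensuring that the chosen $b$ truly avoids every maximal overgroup of $T$ in every family and every exceptional small case — this is where the `extended' form of \cite[Lemma 2.3]{MT}, asserting that the centralizer order is controlled for every element of the given prime order and not merely for one representative, becomes essential, since it is what upgrades an existence statement about one pair $(x,y)$ into the required statement quantified over all $x,y$ of the prescribed orders.
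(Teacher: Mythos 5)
Your proposal follows essentially the same route as the paper's proof: choose $a=|T|$ for a self-centralizing cyclic maximal torus $T$ via \cite[Theorem 3.1]{FS}, \cite[Table 1]{W} and \cite{L}, verify the hypotheses of Lemma~\ref{lemtorus} using the `extended' \cite[Lemma 2.3]{MT}, read off the maximal overgroups of $T$ from \cite[Section 4]{W} (supplemented by \cite[Proposition 6.2]{GK} and \cite{GM} for small $q$), pick $b$ coprime to their orders so that Corollary~\ref{lemendgame} applies, handle the nontrivial centres in types $E_6$, $^2E_6$, $E_7$ separately, and dispose of the small excluded cases ($^2F_4(2)'$, $G_2(3)$, $G_2(4)$, etc.) by direct \cite{ATLAS} inspection. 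The only deviation is your contingency of falling back on Lemma~\ref{lem3.1} via low-degree linear representations, which the paper never needs: the torus argument suffices in every family.
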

\begin{proof}
In the following, we denote by $\pi (n)$ the set of prime divisors of the positive
integer $n$ and by $\Phi _k(x)$ the $k$-th cyclotomic polynomial.
We consider the exceptional groups, beginning with those of smallest Lie rank.
Our basic proof strategy is to choose $a,b\in oe(S)$, where possible, so that the 
hypotheses of Corollary~\ref{lemendgame} hold. Then we have immediately that 
Theorem~B holds for $a,b$. We call this `the standard argument'.
\par 
{\bf (1):} \emph{Let $S={}^2B_2(q)$, with $q=2^{2n+1}$ and $n\geq 1$.}
\par Then $|S|=q^2(q-1)(q^2+1)$. 
Write $r=2^{n+1}$, so $q^2+1=(q+r+1)(q-r+1)$. 
Since $^2B_2(2)$ is a Frobenius group of order $20$, the field order $q$ is at least 8.
\par
Let $p\in \ppd(q,4)$, which is nonempty. Since $q^2+1=(q+r+1)(q-r+1)$, the prime $p$
divides $a:=q+\varepsilon r+1$ where $\varepsilon=\pm1$. By \cite[Theorem 3.1]{FS}, $S$ has a cyclic maximal torus $T$  
of order $a$ and we note that $|S:T|$ is coprime to $p$.
By comparing orders, we deduce from 
the `extended' \cite[Lemma 2.3]{MT} that $C_S(y)=T$ for $y\in T$ of order $|T|_p$.
By \cite{S}, the only maximal 
subgroup of $S$ containing $T$ is its normaliser, of order $4a$. 
Then the standard argument applies for $a$ and any $b\in \pi(q-\varepsilon r+1)$, 
since $b\neq 2$ and $\gcd(q+r+1,q-r+1)=1$, so $b$ does not divide $4a$.
\par
{\bf (2):} \emph{Let $S={}^2G_2(q)'$, with $q=3^{2n+1}$ and $n\geq 1$.}
\par Then $|S|=q^3(q-1)(q^3+1)$.
Write $r=3^{n+1}$, so $q^3+1=(q+1)(q+r+1)(q-r+1)$.
Since $^2G_2(3)'\cong \PSL(2,8)$ has been already
treated in Proposition~\ref{prop3.2}, we may assume that $q\geq 27$.
\par
Let $p\in \ppd(q,6)$, which is nonempty. Then $p$ divides $a=q+\varepsilon r+1$ where $\varepsilon=\pm1$. 
By \cite[Theorem 3.1]{FS}, 
$S$ has a cyclic maximal torus $T$
of order $a$ and we note that $|S:T|$ is coprime to $p$. As in (1), 
 $C_S(y)=T$ for $y\in T$ of order $|T|_p$, and
by \cite{K} and \cite{LN}, the only maximal 
subgroup of $S$ containing $T$ is its normaliser, of order $6a$.
Let $b\in \pi(q-\varepsilon r+1)$.
Then $b\in oe(S)$, but $b$ does not divide $6a$, because $b\ne 2,3$ and $\gcd(q+r+1,q-r+1)=1$. Thus the standard argument applies.
\par

{\bf (3a):} \emph{Let $S={}^2F_4(2)'$, the Tits group.}
\par Then $|S|=2^{11}\cdot 3^3\cdot 5^2\cdot 13$.
By \cite{ATLAS3}, $13,10\in oe(S)$ and each maximal
subgroup of $S$ of order divisible by $130$ is isomorphic to $\PSL(2,25)$, which
contains no elements of order $10$. Hence $\langle x,y\rangle=S$ whenever $x,y\in S$
with $|x|=13$ and $|y|=10$.
\par
{\bf (3b):} \emph{Let $S={}^2F_4(q)$, with $q=2^{2n+1}$ and $n\geq 1$.}
\par Then 
$|S|=q^{12}(q^6+1)(q^4-1)(q^3+1)(q-1)$. Write $r=2^{n+1}$, so 
$$\frac {q^6+1}{q^2+1}= q^4-q^2+1=(q^2+rq+q+r+1)(q^2-rq+q-r+1)
$$
and $\gcd(q^2+rq+q+r+1,q^2-rq+q-r+1)$
divides $(q^2+q+1)(q-1)=q^3-1$.
\par
Let $p\in \ppd(q,12)$, which is nonempty. Then $p$ divides $a:=q^2+\varepsilon rq+q+\varepsilon r+1$, where $\varepsilon=\pm1$, 
and $|S|/a$ is coprime to $p$. By \cite[Theorem 3.1]{FS}, $S$ has a cyclic maximal torus $T$ of order $a$, and arguing as in (1), the hypotheses of Lemma~\ref{lemtorus} hold for $T$. By \cite{M}, the only maximal
subgroup of $S$ containing $T$ is $N_S(T)$ of order $12a$. 
Let $b\in\ppd(q,6)$, which is nonempty. Then $b$ divides $q^3+1$ and hence $b\in oe(S)$ and $b$ does not divide $a$. 
Also $b\geq 7$ and so $b$ does not divide $12$. 
It follows that $b$ does not divide $12a$,
and hence, the standard argument applies.
\par
{\bf (4):} \emph{Let $S=G_2(q)$, with $q>2$.}
\par Then $|S|=q^6(q^6-1)(q^2-1)$.
Since $G_2(2)'\cong \PSU(3,3)$ has been already treated, we may assume that $q\geq 3$.
First we deal with $G_2(3)$ and $G_2(4)$, which were excluded in \cite{W}.
\par
Let $S=G_2(q)$ with $q=3$ or $4$. Then by \cite[pp.\,60--61,97]{ATLAS}, $a,13\in oe(S)$,
where $a=7$ if $q=3$ and $a=5$ if $q=4$, and each
maximal subgroup $M$ of $S$ of order divisible by $13a$ is isomorphic to 
$\PSL(2,13)$ if $q=3$, and to $\PSU(3,4):2$ if $q=4$. In either case, the derived group $M'$ is generated 
by any pair of its elements with one of order $a$ and the other of order $13$. Hence, if $x,y\in S$ 
with $|x|=a$ and $|y|=13$, then
$\langle x,y\rangle$ is nonsolvable.
\par
Let, now,  $S=G_2(q)$, with $q\geq 5$ and let $p\in \ppd(q,6)$, which is nonempty. Since $q^3+1=(q^2-q+1)(q+1)$,
$p$ divides $a:=q^2-q+1=\Phi_6(q)$ and $|S|/a$ is coprime to $p$.  By \cite[Table I]{W}, $S$ has a cyclic maximal torus $T$ 
of order $a$, and arguing as in (1), the hypotheses of Lemma~\ref{lemtorus} hold for $T$. By \cite[Section 4]{W},
each maximal subgroup $M$ of $S$ containing $T$ is isomorphic to $\SU(3,q).2$ and hence
$|M|=2q^3(q^3+1)(q^2-1)$. Let $b\in \ppd(q,3)$, which is nonempty. Then $b\in oe(S)$ and $\gcd(b,2q)=1$.
Since $b$ divides $q^3-1$, also $\gcd(b,q^3+1)=1$, and it follows that $b$ does not divide $|M|$. Hence, the standard argument applies.


\par
{\bf (5):} \emph{Let $S={}^3D_4(q)$, with $q\geq 2$.} 
\par Then $|S|=q^{12}(q^8+q^4+1)(q^6-1)(q^2-1)$, where
$q^8+q^4+1=(q^4-q^2+1)(q^4+q^2+1)$.
\par
Let $p\in \ppd(q,12)$, which is nonempty. Since $q^6+1=(q^4-q^2+1)(q^2+1)$, 
$p$ divides $a=q^4-q^2+1=\Phi _{12}(q)$ and $|S|/a$ is coprime to $p$.
By \cite[Table I]{W}, there exist a cyclic maximal torus $T$ of $S$
of order $a$, and arguing as in (1), the hypotheses of Lemma~\ref{lemtorus} hold for $T$. By \cite[Section 4]{W},
the only maximal subgroup of $S$ containing $T$ is $N_S(T)$ of order $4a$.
Let $b\in \ppd(q,6)$ if $q\neq 2$ and let $b=7$ if $q=2$. Then $b\neq 2$, $b\in oe(S)$ and $\gcd(b,q^4-q^2+1)=1$,
since $b$ divides $q^6-1$ and $q^4-q^2+1$ divides $q^6+1$. Thus $b$ does not divide $4a$, and hence, the standard argument applies.
\par
{\bf (6):} \emph{Let $S=F_4(q)$, with $q\geq 2$.}
\par  Then $|S|=q^{24}(q^{12}-1)(q^8-1)(q^6-1)(q^2-1)$.
First we deal with $F_4(2)$ and $F_4(3)$, which were excluded in \cite{W}.
\par
Let first $S=F_4(2)$. Notice that $13,17\in oe(S)$ and by \cite[Proposition 6.2]{GK},
the maximal subgroups of $S$ of order divisible by $17$ are isomorphic to $\PSp(8,2)$, which is of order not
divisible by $13$. Hence, the standard argument applies.
\par
Let  now $S=F_4(3)$. Notice that both  $73\in \ppd(3,12)$ and $41\in \ppd(3,8)$ belong to $oe(S)$.
By \cite[Proposition 6.2]{GK},
the maximal subgroups of $S$ of order divisible by $73$ are isomorphic to $^3D_4(3).3$, which is of order not
divisible by $41$ (see \cite[p. 241]{ATLAS}). Hence, the standard argument applies.
\par 
Let, finally, $S=F_4(q)$, with $q\geq 4$ and let $p\in \ppd(q,12)$. It follows, as in (5), that 
$p$ divides $a=q^4-q^2+1=\Phi _{12}(q)$ and $|S|/a$ is coprime to $p$.
By \cite[Table I]{W}, $S$ has a cyclic maximal torus $T$ 
of order $a$ and arguing as in (1), the hypotheses of Lemma~\ref{lemtorus} hold for $T$. By \cite[Section 4]{W},
every maximal subgroup $M$ of $S$ containing $T$ is isomorphic to $^3D_4(q).3$ and hence
$|M|=3q^{12}(q^8+q^4+1)(q^6-1)(q^2-1)$.
Let $b\in \ppd(q,8)$, which is nonempty. Then $b\in oe(S)$, but $b$ does not divide $|M|$, 
since $\gcd(b,3q)=1$, $\gcd(b,(q^6-1)(q^2-1))=1$  and $\gcd(q^8+q^4+1,b)$ divides $\gcd(q^{12}-1,q^8-1)=q^4-1$,
so also $\gcd(q^8+q^4+1,b)=1$. Hence, the standard argument applies.
\par
{\bf (7):} \emph{Let $S={}^2E_6(q)$, with $q\geq 2$.}
\par Then $|S|=\frac 1dq^{36}(q^{12}-1)(q^9+1)(q^8-1)(q^6-1)(q^5+1)(q^2-1)$,
and $d=\gcd(3,q+1)$. Moreover, $S=\hat S/\mathbb Z(\hat S)$, where $\hat S=G_F$, with $G$ a simply connected
algebraic group of exceptional type $^2E_6$ and $|\mathbb Z(\hat S)|=d$.
\par
Let $p\in \ppd(q,18)$, which is nonempty. Since $q^9+1=(q^3+1)(q^6-q^3+1)$, $p$ divides $a=q^6-q^3+1$.
By \cite[Table 1]{W} and \cite{L}, $\hat S$ has a cyclic maximal torus $T$ 
of order $a$ and arguing as in (1), the hypotheses of Lemma~\ref{lemtorus} hold for $T$. Note that $\mathbb Z(\hat S)\leq T$.
\par
It follows by \cite[Section 4]{W} 
for $q\geq 4$ and by \cite[Theorem 6.2]{GK} for $q=2,3$,
that every maximal subgroup $M$ of $\hat S$ containing $T$ is isomorphic to $\PSU(3,q^3).3$ and hence
$|M|=\frac 3dq^9(q^9+1)(q^6-1)$.
\par
Let $b\in \ppd(q,12)$, which is nonempty, and note that 
$b\geq 12+1=13$. Then $b$ divides $q^6+1$ and $b\neq 3 $.
Hence $b\in oe(\hat S)$ and $b$ does not divide $|M|$. Thus, by the standard argument,
if $x\in \hat S$
is of order $a$ and $y\in \hat S$ is of order $b$,
then $\langle x,y\rangle=\hat S$.
\par 
If $d=1$, then Theorem~B holds for $S=\hat S$. So suppose that $d=3$. Then $q\equiv -1 \pmod{3}$, 
$a=q^6-q^3+1\equiv 3 \pmod{9}$, $\gcd(3,b)=1$ and
since $\mathbb Z(\hat S)\leq \langle x\rangle$ for each $x\in \hat S$ of order
$a$, it follows that $3$ divides $a$. Thus $a/3,b\in oe(S)$.
Let $z=\hat z\mathbb Z(\hat S)$ be an arbitrary element of $S$ of order $a/3$ and let
$w=\hat w\mathbb Z(\hat S)$ be an arbitrary element of $S$ of order $b$,
where $\hat z,\hat w$ are elements of
$\hat S$. Since $\gcd(a/3,3)=\gcd(b,3)=1$ and $|\mathbb Z(\hat S)|=3$, we may always
choose $\hat z$ of order $a$  and $\hat w$ of order $b$. Since, as shown above,
$\langle \hat z,\hat w\rangle=\hat S$, it follows that
$\langle z,w\rangle=S$.
\par       
{\bf (8):} \emph{Let $S=E_6(q)$, with $q\geq 2$.}
\par Then $|S|=\frac 1dq^{36}(q^{12}-1)(q^9-1)(q^8-1)(q^6-1)(q^5-1)(q^2-1)$,
where $d=\gcd(3,q-1)$. Moreover, $S=\hat S/\mathbb Z(\hat S)$, where $\hat S=G_F$, with $G$ a simply connected 
algebraic group of exceptional type $E_6$ and $|\mathbb Z(\hat S)|=d$.
\par
Let $p\in \ppd(q,9)$, which is nonempty. Since $q^9-1=(q^3-1)(q^6+q^3+1)$, $p$ divides $a=q^6+q^3+1$.
By \cite[Table 1]{W}, $\hat S$ has a cyclic maximal torus $T$ 
of order $a$, and arguing as in (1), the hypotheses of Lemma~\ref{lemtorus} hold for $T$. 
In particular, $\mathbb Z(\hat S)\leq T$. 
By \cite[Section 4]{W},
every maximal subgroup $M$ of $\hat S$ containing $T$ is isomorphic to $\SL(3,q^3).3$ and hence
$|M|=3q^9(q^9-1)(q^6-1)$.
\par
Let $b\in \ppd(q,12)$, which is nonempty, and note that $b\geq 12+1=13$. Then $b\in oe(\hat S)$, $b$ divides $q^6+1$ and $b\neq 3 $.
It follows that $b$ does not divide $|M|$.
Hence, by the standard argument, if $x\in \hat S$ 
is of order $a$ and $y\in \hat S$ is of order $b$,
then $\langle \hat x,\hat y\rangle=\hat S$.
\par
If $d=1$, then Theorem~B holds for $S=\hat S$. So suppose that $d=3$. Then it follows using the same proof as for ${}^2E_6(q)$ that if 
$z=\hat z\mathbb Z(\hat S)$ has order $a/3$ and $w=\hat w\mathbb Z(\hat S)$ has order $b$, where $\hat z,\hat w$ are elements of $\hat S$, then  $\langle z,w\rangle=S$.
\par
{\bf (9):} \emph{Let $S=E_7(q)$, with $q\geq 2$.}
\quad Then 
$$
|S|=\frac 1dq^{63}\prod _{i\in I}(q^i-1),\qquad \text{with}\quad I=
\{2,6,8,10,12,14,18\}\,
$$
where $d=\gcd(2,q-1)$. Moreover, $S=\hat S/\mathbb Z(\hat S)$, where $\hat S=G_F$, with $G$ a simply connected
algebraic group of exceptional type $E_7$ and $|\mathbb Z(\hat S)|=d$.
\par
Let $p\in \ppd(q,18)$, which is nonempty. Since $q^9+1=(q^3+1)(q^6-q^3+1)$, $p$ divides $a=(q+1)(q^6-q^3+1)$.
By \cite[Table 1]{W} and \cite{L}, $S$ has a cyclic maximal torus $T$ 
of order $a$, and arguing as in (1), the hypotheses of Lemma~\ref{lemtorus} hold for $T$. In particular, $\mathbb Z(\hat S)\leq T$.
\par
By \cite[Section 4]{W} for $q\geq 4$ and by \cite[Proposition 2.11]{GM} for $q=2,3$,
every maximal subgroup $M$ of $\hat S$ containing $T$ is isomorphic to $(Z_{q+1}\cdot^2E_6(q)).2$ and hence
$|M|=\frac2{d_1}(q+1)q^{36}(q^{12}-1)(q^9+1)(q^8-1)(q^6-1)(q^5+1)(q^2-1)$, where $d_1=\gcd(3,q-1)$. 
\par
Let $b\in \ppd(3,14)$, which is nonempty. Then $b\in oe(\hat S)$, $b$ divides $q^7+1$ and $b\neq 2$.
Since $\gcd(q^9+1,q^7+1)$ divides $q^2-1$, it follows that $b$ does not divide $|M|$.
Hence, by the standard argument, if $x\in \hat S$
is of order $a$ and $y\in \hat S$ is of order $b$,
then $\langle \hat x,\hat y\rangle=\hat S$.
\par
If $d=1$, then Theorem~B holds for $S=\hat S$. So suppose that $d=2$. Then $q$ is odd and 
since $\mathbb Z(\hat S)\leq \langle x\rangle$ for each $x\in \hat S$ of order
$a$, it follows that $a$ is even, and $a/2,b\in oe(S)$.
Let $z=\hat z\mathbb Z(\hat S)$ be an arbitrary element of $S$ of order $a/2$ and let
$w=\hat w\mathbb Z(\hat S)$ be an arbitrary element of $S$ of order $b$,
where $\hat z,\hat w$ are elements of $\hat S$. 
Since $\gcd(b,2)=1$ and $|\mathbb Z(\hat S)|=2$, it follows that we may 
choose $\hat w$ of order $b$. Now consider $\hat z$. Let  $L=\langle \hat z,\mathbb Z(\hat S)\rangle$.
Then $L$ is abelian and it contains an element $\hat u$ of order $p$. By \cite[Lemma 2.3]{MT}, 
$|\mathbb C_{\hat S}(\hat u)|=a$ and as shown in (1), this centralizer is a cyclic maximal torus 
of $\hat S$. Consequently, as $L\leq \mathbb C_{\hat S}(\hat u)$, we may also choose $\hat z$ of order $a$.  
Since, as shown above,
$\langle \hat z,\hat w\rangle=\hat S$, it follows that
$\langle z,w\rangle=S$.
\par
{\bf (10):} Let \emph{$S=E_8(q)$, with $q\geq 2$.}
\quad Then 
$$|S|=q^{120}\prod _{i\in I}(q^i-1),\qquad \text{with}\quad I=
\{2,8,12,14,18,20,24,30\}\ .$$
Let $p\in \ppd(q,30)$, which is nonempty. Since 
\begin{multline}
$$q^{15}+1=(q^2-q+1)(q^5+1)(q^8+q^7-q^5-q^4-q^3+q+1)\\
=(q^2-q+1)(q^5+1)\Phi_{30}(q),$$
\end{multline}
$p$ divides $a=\Phi _{30}(q)$ and $|S|/a$ is coprime to $p$.
By \cite[Table I]{W}, $S$ has a cyclic maximal torus $T$ 
of order $a$, and arguing as in (1), the hypotheses of Lemma~\ref{lemtorus} hold for $T$. By \cite[Section 4]{W},
the only maximal subgroup of $S$ containing $T$ is $N_S(T)$ of order $30a$.
\par
Let $b\in \ppd(q,24)$, which is nonempty, and note that $b\equiv 1\pmod{24}$, and hence that $b\geq 25$ and $b$ does not divide $30$. 
Now $b$ divides $q^{12}+1$ and $a$ divides $q^{15}+1$, and
since $\gcd(q^{12}+1,q^{15}+1)$ divides $q^3-1$,
it follows that $b$ does not divide $a$. Therefore, $b$ does not divide $|N_S(T)|=30a$, so the standard argument applies.
\end{proof}


We are now ready to complete the proof of Theorem~B, which we state again:
\begin{theoremB}
Let $S$ be a nonabelian finite simple group. Then there exist
$a,b \in oe(S)$,  such that every pair of elements of $S$ of order
$a$ and $b$, respectively, generates a nonsolvable subgroup of $S$.
\end{theoremB}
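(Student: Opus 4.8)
The plan is to invoke the classification of finite simple groups and then assemble the four propositions already established. By the classification, every finite nonabelian simple group $S$ is isomorphic to an alternating group $A_m$ with $m\geq5$, to one of the $26$ sporadic simple groups, to a classical simple group of Lie type, or to an exceptional simple group of Lie type (with the Tits group ${}^2F_4(2)'$ most conveniently placed among the latter, as in Proposition~\ref{prop3.3}). First I would fix such an isomorphism type for $S$; the only subtlety here is the handful of exceptional isomorphisms among low-rank groups, such as $A_5\cong\PSL(2,4)\cong\PSL(2,5)$, $A_6\cong\PSp(4,2)'$, $A_8\cong\PSL(4,2)$, $G_2(2)'\cong\PSU(3,3)$ and ${}^2G_2(3)'\cong\PSL(2,8)$. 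These cause no difficulty, because the propositions were stated so as to be consistent across such identifications, and one simply chooses any one representative.

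The proof is then a four-way case split. If $S\cong A_m$ with $m\geq5$, the conclusion is Proposition~\ref{prop3.4}; if $S$ is one of the $26$ sporadic groups, it is Proposition~\ref{prop3.5} with the explicit data of Table~\ref{sporad}; if $S$ is a classical group of Lie type, it is Proposition~\ref{prop3.2}; and if $S$ is an exceptional group of Lie type (including the Tits group), it is Proposition~\ref{prop3.3}. In each case the cited result supplies integers $a,b\in oe(S)$ such that $\langle x,y\rangle$ is nonsolvable for all $x,y\in S$ with $|x|=a$ and $|y|=b$, which is precisely the assertion of Theorem~B. Thus nothing remains for this final step beyond verifying that the four propositions jointly exhaust all isomorphism types delivered by the classification, which they do.

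The real difficulty is of course hidden inside the propositions rather than in this assembly, and it is worth recording where it lies. For groups of Lie type of large rank or over large fields the argument is uniform: one selects $a$ and $b$ to be (basic or large) primitive prime divisors of $q^e-1$ for two exponents $e>f$ both exceeding $d/2$, and Lemmas~\ref{lem2.6} and~\ref{lem3.1}---which rest on the analyses of linear groups containing ppd-elements in~\cite{NP1,NP2}---force $\langle x,y\rangle$ to be nonsolvable. The genuine obstacle is the finite list of small cases where Zsigmondy's theorem (Theorem~\ref{Z}) or Proposition~\ref{Lbpd} denies the existence of the required primitive prime divisors, or where the dimension is too small for the hypotheses of Lemma~\ref{lem2.6}; each of these must be resolved by hand, reading maximal subgroup orders from \cite{ATLAS} and checking that no maximal subgroup of $S$ contains elements of both chosen orders. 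Once those exceptional cases are dispatched inside the propositions, the proof of Theorem~B itself is the immediate case split described above.
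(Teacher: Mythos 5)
Your proposal is correct and is essentially identical to the paper's own proof, which likewise deduces Theorem~B directly from the classification of finite simple groups together with Propositions~\ref{prop3.4}, \ref{prop3.5}, \ref{prop3.2} and \ref{prop3.3}. The additional remarks on exceptional isomorphisms and on where the real work lies are accurate but do not change the argument.
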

\begin{proof}
This follows from
Propositions~\ref{prop3.4}, \ref{prop3.5}, \ref{prop3.2}, \ref{prop3.3} and from the classification of the finite
simple groups. 
\end{proof}
\par
\section{Proof of Theorem~A}

We finally show that Theorem~A follows from Theorem~B.
First, we restate Theorem~A.
\begin{theoremA}
Let $G$ be a finite group. Assume that for every $x,y \in G$ there
exists an element $g \in G$ such that $\langle x,y^g\rangle$ is solvable.
Then $G$ is solvable.
\end{theoremA}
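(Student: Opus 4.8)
The plan is to argue by contradiction, taking $G$ to be a counterexample of minimal order: a finite group satisfying the hypothesis but which is nonsolvable. The first thing I would record is that the hypothesis is inherited by quotients. Indeed, if $N \nor G$ and $\bar x, \bar y \in G/N$, then lifting to $x,y\in G$ and choosing $g\in G$ with $\gen{x,y^g}$ solvable shows that $\gen{\bar x, \bar y^{\bar g}}$, being a homomorphic image of $\gen{x,y^g}$, is solvable. Consequently, writing $R(G)$ for the solvable radical, the quotient $G/R(G)$ again satisfies the hypothesis; if $R(G)\neq 1$ then minimality of $|G|$ forces $G/R(G)$ to be solvable, whence $G$ is solvable, a contradiction. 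So in the minimal counterexample $R(G)=1$, and in particular $G$ has no nontrivial abelian normal subgroup.

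Next I would fix a minimal normal subgroup $M$ of $G$. Since $R(G)=1$, $M$ cannot be abelian, so $M$ is a direct product $M=S_1\times\cdots\times S_t$ of pairwise isomorphic nonabelian simple groups, all isomorphic to some simple group $S$. These factors $S_i$ are precisely the minimal normal subgroups of $M$, and hence are permuted among themselves under conjugation by elements of $G$ (since $M\nor G$). This permutation-and-automorphism action on the factors is the crucial structural fact I would exploit.

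The heart of the argument is then to produce a single pair $x,y\in G$ witnessing failure of the hypothesis. Applying Theorem~B to $S$, I obtain $a,b\in oe(S)$ such that any two elements of $S$ of orders $a$ and $b$ generate a nonsolvable subgroup. I would choose the \emph{diagonal} elements $x=(x_1,\dots,x_t)$ and $y=(y_1,\dots,y_t)$ in $M\leq G$ with each $x_i\in S_i$ of order $a$ and each $y_i\in S_i$ of order $b$; then $|x|=a$ and $|y|=b$. For an arbitrary $g\in G$, conjugation permutes the factors by some $\sigma$ and maps each $S_i$ isomorphically onto $S_{\sigma(i)}$, so $y^g\in M$ and its component in any fixed factor $S_j$ is the image of $y_{\sigma^{-1}(j)}$ under an isomorphism, hence has order exactly $b$. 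Projecting $H:=\gen{x,y^g}\leq M$ onto $S_j$ via the coordinate homomorphism $\pi_j$ gives $\pi_j(H)=\gen{x_j,\pi_j(y^g)}$, generated by elements of orders $a$ and $b$ in $S_j\cong S$, which is nonsolvable by Theorem~B. Thus $H$ has a nonsolvable quotient and so is itself nonsolvable, and this holds for \emph{every} $g\in G$. This contradicts the assumption applied to the pair $x,y$, completing the proof.

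I would expect the only real subtlety to be the case where $M$ has several factors ($t>1$): a naive choice of both generators inside a single factor $S_1$ fails, because for some $g$ the conjugate $y^g$ may land in a different factor and then commute with $x$, yielding an abelian, hence solvable, subgroup. The diagonal choice is exactly what defeats this, since it forces a nonsolvable image in every coordinate no matter how $g$ permutes the factors. The one point requiring care is verifying that the projections $\pi_j(x)$ and $\pi_j(y^g)$ retain the precise orders $a$ and $b$ under the permutation-and-automorphism action, which is what makes Theorem~B applicable in each coordinate.
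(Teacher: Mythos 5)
Your proposal is correct and follows essentially the same route as the paper: inheritance of the hypothesis by quotients, reduction to a nonabelian minimal normal subgroup $M\cong S^t$, and the diagonal elements $x,y\in M$ whose coordinate projections force $\gen{x,y^g}$ to surject onto a nonsolvable subgroup of each factor via Theorem~B. The only cosmetic differences are that the paper reduces via induction and a unique minimal normal subgroup rather than via $R(G)=1$, and it conjugates both generators, neither of which changes the substance.
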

\begin{proof}
Suppose that the hypothesis holds for a group $G$. This is clearly equivalent to assuming that,
for all pairs $C,D$ of conjugacy classes of a group $G$ there
exist elements $x \in C$ and $y\in D$ such that $\langle x,y\rangle$ is solvable.
\par
We claim that this property is inherited by factor groups:
let $N$ be a normal subgroup of $G$ and write $\overline{G} = G/N$.
Since ``overbar'' is a homomorphism, it sends conjugacy classes of $G$
onto conjugacy classes of $\overline{G}$.
Hence, given two  conjugacy classes $\overline{C}$ and $\overline{D}$ of
$\overline{G}$, we may assume that $C$ and $D$ are conjugacy classes of $G$.
So, by our assumption, there exist $x \in C$ and $y\in D$ such that $\langle x,y\rangle$ is solvable.
Hence, $\overline{x} \in \overline{C}$, $\overline{y} \in \overline{D}$ and
$\langle \overline{x} , \overline{y}\rangle \leq \overline{\langle x,y\rangle}$ is solvable. 
Thus the claim is proved.
\par
Theorem~A holds trivially if $|G|=1$. Suppose inductively 
that $|G|>1$ and that Theorem~A holds for groups of orders 
less than $|G|$. Let $M$ be a minimal normal subgroup of $G$.
Then by induction, $G/M$ is solvable. 
If $G$ has distinct minimal normal
subgroups $M_1, M_2$, then $G$ is isomorphic to a subgroup of 
the solvable group $G/M_1 \times G/M_2$.
Thus we may assume that $G$ has a unique
minimal normal subgroup $M$. 
If $M$  is solvable, then $G$ is solvable as well. 
We show that this must be the case: suppose to the contrary that $M$ is nonsolvable.
The characteristically simple group $M$ is a direct product of isomorphic simple groups.
We hence identify $M$ with the direct power $S^k$ of a nonabelian simple group $S$.
By Theorem~B, there exist $a,b \in oe(S)$ such that for every choice  of elements
$x, y \in S$ with $|x| = a$ and $|y| = b$, the group $\langle x,y\rangle$ is nonsolvable.
In particular,
$\langle x^\alpha, y^\beta\rangle$ is nonsolvable for all $\alpha,\beta \in \text{Aut}(S)$.
Consider now the diagonal elements $u =(x,x, \dots, x)$ and $w = (y, y, \dots, y)$ of
$M$. Recalling that $G$ can be embedded in the wreath product of $\text{Aut}(S)$ by a
(solvable) subgroup of the symmetric group $S_k$, we see that for every $g, h \in G$ we have
$u^g =(x^{\alpha_1},x^{\alpha_2}, \dots, x^{\alpha_k})$ and
$w^h = (y^{\beta_1}, y^{\beta_2}, \dots, y^{\beta_k})$,
where $\alpha_i, \beta_i \in \text{Aut}(S)$ for $i = 1, 2, \dots, k$.
Observe also that $\langle u^g, w^h\rangle$ is a subdirect subgroup of the direct product
$$
 \prod_{i=1}^k \langle x^{\alpha_i}, y^{\beta_i}\rangle.
$$
However $\langle x^{\alpha_i}, y^{\beta_i}\rangle$ is a nonsolvable subgroup of $S$, for every $i = 1, \dots, k$.
It follows that $\langle u^g, w^h\rangle$ is nonsolvable
for every choice of $g$ and $h$ in $G$, which is the required contradiction.
\end{proof}


\end{document}